\newtheorem{theorem}{Theorem}
\newtheorem{example}[theorem]{Example}%
\newtheorem{remark}[theorem]{Remark}%
\newtheorem{lemma}[theorem]{Lemma}
\newtheorem{observation}[theorem]{Observation}
\newtheorem{corollary}[theorem]{Corollary}
\newtheorem{claim}[theorem]{Claim}
\newtheorem{definition}[theorem]{Definition}
\newtheorem{invariant}{Invariant}
\theoremstyle{example}
\newtheorem{proc}{Procedure}
\theoremstyle{plain}
\newcommand{\bb}{\mathbb}
\newcommand{\R}{\bb R}
\newcommand{\Z}{{\bb Z}}
\newcommand{\N}{{\bb N}}
\newcommand*{\sub}{\textup{sub}}
\newcommand*{\sepp}{\textup{sep}}
\newcommand*{\update}{\textup{update}}
\newcommand*{\query}{\textup{query}}
\newcommand{\amitabh}[1]{\textcolor{red}{[Amitabh: #1]}}
\DeclareMathOperator*{\argmin}{argmin}
\DeclareMathOperator*{\dist}{dist}
\DeclareMathOperator*{\vol}{vol}
\DeclareMathOperator*{\icomp}{icomp}
\DeclareMathOperator*{\adv}{Adv}
\DeclareMathOperator*{\alg}{Alg}
\newcommand{\phillip}[1]{\textcolor{teal}{[Phillip: #1]}}
\def\ve#1{\mathchoice{\mbox{\boldmath$\displaystyle\bf#1$}}
{\mbox{\boldmath$\textstyle\bf#1$}}
{\mbox{\boldmath$\scriptstyle\bf#1$}}
{\mbox{\boldmath$\scriptscriptstyle\bf#1$}}}
\newcommand{\x}{{\ve x}}
\newcommand{\y}{{\ve y}}
\newcommand{\z}{{\ve z}}
\newcommand{\vv}{{\ve v}}
\newcommand{\g}{{\ve g}}
\newcommand{\e}{{\ve e}}
\renewcommand{\u}{{\ve u}}
\renewcommand{\a}{{\ve a}}
\newcommand{\0}{{\ve 0}}
\newcommand{\p}{{\ve p}}
\newcommand{\w}{{\ve w}}
\renewcommand{\b}{{\ve b}}
\newcommand{\cU}{\mathcal{U}}
\newcommand{\I}{\mathcal{I}}
\newcommand{\Lag}{\mathcal{L}}
\newcommand{\cH}{\mathcal{H}}
\newcommand{\cI}{\mathcal{I}}
\newcommand{\cO}{\mathcal{O}}
\newcommand{\F}{\mathcal{F}}
\newcommand{\cA}{\mathcal{A}}
\newcommand{\cM}{\mathcal{M}}
\newtheorem{prop}[theorem]{Proposition}
\newcommand{\red}[1]{{\color{red} #1}}
\newcommand{\OPT}{\textup{OPT}}
\newcommand{\mspan}{\textup{span}}
\newcommand{\ip}[2]{\langle #1, #2 \rangle}
\renewcommand{\epsilon}{\varepsilon}
\newcommand{\ones}{\mathbf{1}}
\newcommand{\cG}{\mathcal{G}}
\newcommand{\val}{\textup{val}}
\newcommand{\mnote}[1]{\red{M: #1}}
\newcommand{\teal}[1]{{\color{teal}#1}}
\newcommand{\email}[1]{{\small{{\texttt{#1}}}}}
\newcommand{\tF}{\texttt{F}}
\newcommand{\zero}{\bm{0}}
\newsavebox\myboxA
\newsavebox\myboxB
\newlength\mylenA
\newcommand*\xoverline[2][0.75]{%
    \sbox{\myboxA}{$\m@th#2$}%
    \setbox\myboxB\null
    \ht\myboxB=\ht\myboxA%
    \dp\myboxB=\dp\myboxA%
    \wd\myboxB=#1\wd\myboxA
    \sbox\myboxB{$\m@th\overline{\copy\myboxB}$}
    \setlength\mylenA{\the\wd\myboxA}
    \addtolength\mylenA{-\the\wd\myboxB}%
    \ifdim\wd\myboxB<\wd\myboxA%
       \rlap{\hskip 1\mylenA\usebox\myboxB}{\usebox\myboxA}%
    \else
        \hskip -1\mylenA\rlap{\usebox\myboxA}{\hskip 0.5\mylenA\usebox\myboxB}%
    \fi}
\let\eps\varepsilon
\newtheorem{conj}{Conjecture}
\begin{document}

\title{Information Complexity of Mixed-integer Convex Optimization}




\author[1]{Amitabh Basu}
\author[2]{Hongyi Jiang}
\author[1]{Phillip Kerger}
\author[3]{Marco Molinaro}

\affil[1]{Department of Applied Mathematics and Statistics, Johns Hopkins University\\
\email{\{abasu9,pkerger\}@jhu.edu} }
\affil[2]{School of Civil and Environmental Engineering, Cornell University\\
\email{hj348@cornell.edu} }
\affil[3]{Microsoft Research (Redmond) and Computer Science Department, PUC-Rio\\
\email{mmolinaro@microsoft.com}} 

\date{}

\maketitle

\begin{abstract}
We investigate the information complexity of mixed-integer convex optimization under different types of oracles. We establish new lower bounds for the standard first-order oracle, improving upon the previous best known lower bound. This leaves only a lower order linear term (in the dimension) as the gap between the lower and upper bounds. This is derived as a corollary of a more fundamental ``transfer" result that shows how lower bounds on information complexity of continuous convex optimization under different oracles can be transferred to the mixed-integer setting in a black-box manner. 

Further, we (to the best of our knowledge) initiate the study of, and obtain the first set of results on, information complexity under oracles that only reveal \emph{partial} first-order information, e.g., where one can only make a binary query over the function value or subgradient at a given point. We give algorithms for (mixed-integer) convex optimization that work under these less informative oracles. We also give lower bounds showing that, for some of these oracles, every algorithm requires more iterations to achieve a target error compared to when complete first-order information is available. That is, these oracles are provably less informative than full first-order oracles for the purpose of optimization.  

\vspace{0.3cm}
\noindent\textit{Keywords: Mixed-integer optimization, convex optimization, information complexity, lower bounds}
\end{abstract}

%
%
%
%
%
%
%

\section{First-order information complexity}


    
    
    
    
    
    
    




We consider the problem class of {\em mixed-integer convex optimization}:
\begin{equation}\label{eq:MICO}\inf\{f(\x,\y): (\x,\y) \in C, (\x,\y) \in \Z^n \times \R^d\},\end{equation} 
where $f:\R^n\times \R^d \to \R$ is a convex (possibly nondifferentiable) function and $C\subseteq \R^n\times \R^d$ is a closed, convex set. 
Given $\epsilon \geq 0$, we wish to report a point $(\x,\y)\in C \cap (\Z^n\times \R^d)$ such that $f(\x,\y) \leq f(\x', \y') + \epsilon$ for all $(\x',\y') \in C \cap (\Z^n\times \R^d)$. 
%
Such a point will be called an {\em $\epsilon$-approximate solution} and points in $C \cap (\Z^n\times \R^d)$ will be called {\em feasible solutions}. We say that $\x_1, \ldots, \x_n$ are the {\em integer-valued decision variables} or simply the {\em integer variables} of the problem, and $\y_1, \ldots, \y_d$ are called the {\em continuous variables}.

The notion of {\em information complexity} (a.k.a. {\em oracle complexity} or {\em analytical complexity}) goes back to foundational work by Nemirovski and Yudin~\cite{Nemirovski_Yudin_book} on convex optimization (without integer variables) and is based on the following. An algorithm for reporting an $\epsilon$-approximate solution to an instance $(f,C)$ must be ``given" the instance somehow. Allowing only instances with explicit, algebraic descriptions (e.g., the case of linear programming) can be restrictive in some settings. To work with more general, nonlinear instances, the algorithm is allowed to make queries to an oracle to collect information about the instance. 
More formally, we have the following definition.
\medskip

\begin{definition}
    An oracle $\cO$ for an optimization problem class $\cI$ is given by a family $\mathcal Q$ of possible queries
along with a set $H$ of possible answers or responses. A query $q \in \mathcal{Q}$ is a function $q : \cI \rightarrow H$. We say that $q(I) \in H$ is the
{\em answer} or {\em response} to the query $q$ for the instance $I \in \cI$.
\end{definition}
\medskip

Any algorithm using such an oracle to find an $\eps$-approximate solution for an instance 
makes queries about the instance in a sequence according to some strategy depending on the queries made and answers received, which we define formally as its {\em query strategy}. 
\medskip

\begin{definition}\label{def:query-strategy}
A {\em query strategy} is a function $D: (\mathcal{Q}\times H)^* \to \mathcal{Q}$, where $(\mathcal{Q}\times H)^*$ denotes the set of all finite sequences over $\mathcal{Q}\times H$, including the empty sequence. The {\em transcript $\Pi(D,I)$ of a strategy $D$ on an instance $I = (f,C)$} is the sequence of query and response pairs $(q_i, q_i(I))$, $i=1,2,\ldots$ obtained when one applies $D$ on $I$, i.e., $q_1 = D(\emptyset)$ and $q_i = D((q_1, q_1(I)), \ldots, (q_{i-1},q_{i-1}(I)))$ for $i \geq 2$.
\end{definition}
\medskip

If different instances with no common $\eps$-approximate solution produce the same transcript for the queries an algorithm has made, then the algorithm cannot tell them apart and will be unable to reliably report an $\eps$-solution for those instances after those queries. The goal is to design a query strategy that can report an $\epsilon$-approximate solution after making the smallest number of queries. This motivates the following definition of information complexity:
\medskip
\begin{definition} \label{def: icomp} Given a family of instances $\cI$ and access to an oracle $\cO$, 
the {\em $\epsilon$-information complexity $\icomp\textstyle{_\epsilon}(D, I, \cO)$ of an instance $I$ for a query strategy $D$}, 
is defined as the minimum natural number $k$ such that the set of all instances in $\cI$ for which $\cO$ returns the same responses as the instance $I$ to the first $k$ queries of $D$ have a common $\epsilon$-approximate solution. 
The {\em $\epsilon$-information complexity of the problem class $\cI$ with respect to the oracle $\cO$},  
is defined as $$\icomp\textstyle{_\epsilon}(\cI,\cO) := 
\inf_{D}\sup_{I\in \cI}\icomp\textstyle{_\epsilon}(D,I,\cO)
$$ where the infimum is taken over all query strategies.
\end{definition}

Thus, to prove an upper bound $u$ on $\icomp\textstyle{_\epsilon}(\I,\cO)$, it suffices to construct a query strategy that requires, in the worst case, at most $u$ queries to narrow down to a collection of instances that all have a common $\eps$-approximate solution. On the other hand, to establish a lower bound of $\ell$ on $\icomp\textstyle{_\epsilon}(\I,\cO)$, one needs to show that for any query strategy $D$, there exists a collection of instances in $\I$ that give the same responses to the first $\ell$ queries of $D$ (on these instances), and there is no point in $\R^n \times \R^d$ that is a common $\epsilon$-approximate solution to all these instances. 

While we introduce information complexity allowing for any general choice of oracle, the standard oracle that has been studied over the past several decades for convex optimization is the so-called  {\em (full-information) first-order oracle}, which has two types of queries indexed by points in $\R^n\times \R^d$: i) a {\em separation oracle} query indexed by a point $\z \in \R^{n + d}$ reports ``YES" if $\z \in C$ and otherwise reports a separating hyperplane for $\z$ and $C$, ii) a {\em subgradient oracle} query indexed by a point $\z \in \R^{n + d}$ reports $f(\z)$ and a subgradient for $f$ at $\z$. 
Tight lower and upper bounds (differing by only a small constant factor) on the number of queries required were obtained by Nemirovski and Yudin in their seminal work~\cite{Nemirovski_Yudin_book} for the case with no integer variables; roughly speaking, the bound is $\Theta\left(d\log\left(\frac{1}{\epsilon}\right)\right)$. These insights were extended to the mixed-integer setting in~\cite{oertel2014integer,basu2017centerpoints,basu2021complexity}, with the best known lower and upper bounds stated in~\cite{basu2021complexity}. 

Observe that the response to any separation/subgradient query is a vector in $\R^{n+d}$. Thus, each query reveals at least $n+d$ bits of information about the instance. A more careful accounting that measures the ``amount of information" accrued would track the total number of bits of information obtained as opposed to just the total number of oracle queries made. 
A natural question, posed in~\cite{basu2021complexity}, is whether the bounds from the classical analysis would change if one uses this new measure of the total number of bits, as opposed to the number of queries. The intuition, roughly, is that one should need a factor $(n+d)\log\left(\frac{1}{\epsilon}\right)$ larger than the number of first-order queries, because one should need to probe at least $\log\left(\frac{1}{\epsilon}\right)$ bits in $n+d$ coordinates to recover the full subgradient/separating hyperplane (up to desired approximations). We attempt to make some progress on this question in this paper. 

The above discussion suggests that one should consider oracles that return a desired bit of a desired coordinate of the separating hyperplane vector or subgradient. However, one can imagine making other binary queries on the instance; for example, one can pick a direction and ask for the sign of the inner product of the subgradient and this direction. In fact, one can consider more general binary queries that have nothing to do with subgradients/separating hyperplanes. If one allows {\em all} possible binary queries, i.e., one can use any function from the space of instances to $\{0,1\}$ as a query, then one can simply ask for the appropriate bits of the true minimizer and in $O((n+d)\log(1/\epsilon))$ queries, one can get an $\epsilon$-approximate solution. A matching lower bound follows from a fairly straightforward counting argument. Thus, allowing for all possible binary queries gives the same information complexity bound as the original Nemirovski-Yudin bound with subgradient queries in the $n=0$ (no integer variables) case, but is an exponential improvement when $n\geq 1$ (see~\cite{basu2021complexity} and the discussion below). What this shows is that the bounds on information complexity can be quite different under different oracles. With all possible binary queries, while each query reveals only a single bit of information, the queries themselves are a much richer class and this compensates to give the same bound in the continuous case and exponentially better bounds in the presence of integer variables. Thus, to get a better understanding of this trade-off, we restrict to queries that still extract information by only acting ``locally''.



\subsection{Our contributions}

\paragraph{Oracles based on first-order information.} Our first contribution is formalizing this notion of general ``local'' queries. While we focus on first-order information, our framework can be readily extended to consider, for example, information from higher-order derivatives.


\begin{definition}\label{def:SBO} An {\em oracle using first-order information}  $\cO(\cG, \cH)$ consists of two parts:
\begin{enumerate}
    \item For every $\z \in [-R,R]^{n + d}$, there exist three maps $\g^{\sepp}_\z: \I_{n,d,R,\rho,M} \to \R^{n + d}$, $\g^{\val}_\z: \I_{n,d,R,\rho,M} \to \R$, and $\g^{\sub}_\z: \I_{n,d,R,\rho,M} \to  \R^{n + d}$ such that for all $(f,C) \in \I_{n,d,R,\rho,M}$ the following properties hold. 
    \begin{enumerate}
        \item $C \subseteq \{\z' \in \R^{n + d}: \langle \g^{\sepp}_\z(f, C), \z' \rangle < \langle \g^{\sepp}_\z(f, C), \z \rangle\}$ if $\z \not\in C$ and $\g^{\sepp}_\z(f, C)=\0$ if $\z \in C$. In other words, $\g^{\sepp}_\z(f, C)$ returns a (normal vector to a) separating hyperplane if $\z \not\in C$. We will assume that a nonzero response $\g^{\sepp}_\z(f, C)$ has norm 1, since scalings do not change the separation property.
        \item $\g^{\val}_\z(f, C) = f(\z)$. In other words, $\g^{\val}_\z(f, C)$ returns the function value for $f$ at $\z$.
        \item $\g^{\sub}_\z(f, C) \in  \partial f(\z)$, where $\partial f(\z)$ denotes the subdifferential (the set of all subgradients) of $f$ at $\z$. In other words, $\g^{\sub}_\z(f, C)$ returns a subgradient for $f$ at $\z$. 
    \end{enumerate} 
    
    \noindent Such maps will be called {\em first-order maps}. A collection of first-order maps, one for every $\z$, is called a {\em first-order chart} and will be denoted by $\mathcal{G}$.
    \item There are three sets of functions $\mathcal{H}^{\sepp}$, $\mathcal{H}^{\val}$, and $\mathcal{H}^{\sub}$ and  with domains $\R^{n + d}$, $\R$ and $\R^{n + d}$ respectively. We will use the notation $\mathcal{H} = \mathcal{H}^{\sepp} \cup \mathcal{H}^{\val}\cup \mathcal{H}^{\sub}$. $\mathcal{H}$ will be called the collection of {\em permissible queries of the oracle}.
\end{enumerate}
\end{definition}

An algorithm for instances of~\eqref{eq:MICO} using $\cO(\cG, \cH)$ can, at any iteration, choose a point $\z$ and a function $h \in \mathcal{H}$ and receive the response $h(\g^{\sepp}_\z(\widehat f, \widehat C))$, $h(\g^{\val}_\z(\widehat f, \widehat C))$ or $h(\g^{\sub}_\z(\widehat f, \widehat C)$, depending on whether $h \in \mathcal{H}^{\sepp}$, $h \in \mathcal{H}^{\val}$ or $h\in \mathcal{H}^{\sub}$, where $\widehat f$ and  $\widehat C$ are the objective function and feasible region, respectively, of the unknown instance. Hence, queries to an oracle $\cO(\cG, \cH)$ using first-order information are indexed by $(\z, h)$, $\z \in \R^{n} \times \R^{d}, h\in \cH$. Since the goal of this paper is to provide bounds for different types of such oracles, i.e., with different permissible queries $\cH$, let us define some cases of interest.
\medskip

\begin{definition}[Examples of oracles] \label{def:oracle-example}
~

\begin{enumerate}
    \item (Full-information first-order oracle) When $\mathcal{H}$ consists only of the identity functions, i.e.,  $h^{\sepp}(\g_\z^\sepp(\widehat f, \widehat C)) = \g_\z^\sepp(\widehat f, \widehat C)$, $h^\val(\g_\z^\val(\widehat f, \widehat C)) = \g_\z^\val(\widehat f, \widehat C)$ and $h^\sub(\g_\z^\sub(\widehat f, \widehat C)) = \g_\z^\sub(\widehat f, \widehat C)$, we recover a full-information first-order oracle.
    
    \item (Bit oracle) Let $\mathcal{H}^{\textrm{bit}}$ be the set of binary queries that return a desired bit (of a desired coordinate) of the binary representation of $\g_\z^\sepp(\widehat f, \widehat C)$, $\g_\z^\val(\widehat f, \widehat C)$ or $\g_\z^\sub(\widehat f, \widehat C)$. Let $\mathcal{H}^{\textrm{bit}^*}$ be the {\em shifted} bit oracle that additionally returns a desired bit of $\g_\z^\val(\widehat f, \widehat C)+u$, for any $u \in \R$, i.e. $\mathcal{H}^{\textrm{bit}^*}$ allows querying a bit of the function value shifted by some number. 
    
    \item (Inner product threshold queries) Let \begin{align*}
        \mathcal{H}^{\textrm{dir}}&:= \{h^{\sepp}_{\u, c}: h^{\sepp}_{\u, c}(\g_\z^\sepp(\widehat f, \widehat C)) = sgn (\langle \u, \g_\z^\sepp(\widehat f, \widehat C) \rangle - c), \u \in \R^{n+d}, c\in \R\}
        \\ &\cup \{h^\val_{u, c}: h^\val_{u, c}(\g_\z^\val(\widehat f, \widehat C)) = sgn (u\cdot  \g_\z^\val(\widehat f, \widehat C) -c), u \in \R, c\in \R\}
        \\ &\cup \{h^\sub_{\u, c}: h^\sub_{\u, c}(\g_\z^\sub(\widehat f, \widehat C)) = sgn (\langle \u, \g_\z^\sub(\widehat f, \widehat C) \rangle-c), \u \in \R^{n+d}, c\in \R\},
    \end{align*} where \textit{sgn} denotes the sign function, be the set of binary queries that answers whether the inner product of the separating hyperplane, function value or subgradient, with a vector or a number of choice $\u$ or $u$ in the appropriate space, is at least some value $c$ or not. We write these as $\mathcal{H}^{\textrm{dir}}$ since these queries allow for the choice of a ``direction" $\u$, or a number $u$ in the function value case, as part of the query.
    
    \item When $\cH$ is the the set of all possible binary functions on $\R^n\times \R^d$ for the separating hyperplanes, $\R$ for the functions values, and $\R^n \times \R^d$ for the subgradients, we will call the resulting oracle the {\em general binary oracle based on $\cG$.} 
\end{enumerate} 
\end{definition}

These now give us a variety of oracles using first-order information, that clearly provide very different information for each query depending on the choice of permissible queries $\cH$. Note that different first-order charts will result in different oracles of each of these types that may give different answers at any point, depending on which separating hyperplane/subgradient the oracle's first-order map selects at those points for that instance.
\medskip 

We are now ready to state our quantitative results for lower and upper bounds on the information complexity of mixed-integer convex optimization under different oracles; see Table \ref{table:results} for a summary. It is not hard to see that we need to restrict the set of possible instances $\cI$ in order to have meaningful (finite) information complexity $\icomp_{\epsilon}(\cI, \cO)$. We will focus on the following standard parameterization. 

\begin{definition}\label{def:MICO-param} Define $\I_{n,d,R,\rho,M}$ to be the set of all instances of~\eqref{eq:MICO} such that:

\begin{enumerate}
\item[(i)] $C$ is contained in the box $\{\z \in \R^n \times \R^d: \|\z \|_\infty \leq R\}$. The case $C = \{\z \in \R^n \times \R^d: \|\z \|_\infty \leq R\}$ will be called {\em unconstrained}. 
\item[(ii)] If $(\x^\star, \y^\star)$ is an optimal solution of the instance, then there exists $\hat \y \in \R^d$ satisfying $\{(\x^\star,\y): \|\y - \hat \y\|_\infty \leq \rho\} \subseteq C$. In other words, there is a ``strictly feasible" point $(\x^\star, \hat \y)$ in the same fiber as the optimum $(\x^\star, \y^\star)$. 
\item[(iii)] $f$ is Lipschitz continuous with respect to the $\| \cdot \|_\infty$-norm with Lipschitz constant $M$ on $\{\x\} \times [-R,R]^d$ for all $\x \in [-R,R]^n \cap \Z^n$.
In other words, for any $(\x, \y), (\x, \y') \in (\Z^n\times \R^d) \cap [-R,R]^{n + d}$ with $\|\y - \y'\|_\infty \leq R$, $|f(\x,\y) - f(\x,\y')|\leq M\|\y - \y'\|_\infty$ with the convention that $\infty - \infty = 0$.
\end{enumerate} 
\end{definition}

\begin{table}[h!]
\centering
\footnotesize
\begin{tabular}[t]{p{2.5cm}p{1.6cm}p{4.7cm}p{5.2cm}}
\toprule
\raggedright Type of first-order oracle $\cO(\cG, \cH)$ & 
Variables  & Lower bound & Upper bound \\
\midrule
$\cH$ is hereditary         
& Mixed  & $\Omega(2^n\ell)$, where \newline ${\ell \leq \icomp_\eps(\cI_{0, d, R, \rho, M}, \cO(\cG, \cH))}$ \newline (Theorem \ref{thm: transfer theorem pure optimization}) \newline &                       \\
Full-information first-order oracle 
& Mixed & $\Omega\left(2^nd\log\left(\frac{MR}{\min\{\rho,1\}\epsilon}\right)\right)$ \newline (Corollary \ref{cor:standard-LB})    \newline                                                                                              & $O\left(2^nd (n+d) \log\left(\frac{MR}{\min\{\rho,1\}\epsilon}\right)\right)$ \newline (Oertel~\cite{oertel2014integer}, Basu-Oertel~\cite{basu2017centerpoints})            \\
\raggedright $\cH^{bit}, \cH^{bit^*}, \cH^{dir}$, or General Binary Queries                            

& Mixed      & $\tilde\Omega\left(2^n\max\left\{d^{\frac{8}{7}}, d\log\left(\frac{MR}{\min\{\rho,1\}\epsilon}\right)\right\}\right)$ \newline (Theorem \ref{thm:binary-LB})                                                           & $ O\left(2^n d\, (n+d)^2 \log^2\left(\frac{(n+d) MR}{\min\{\rho,1\}\epsilon}\right)\right)$ \newline (Theorem \ref{thm:binary-UB-mixed})  \\


& Continuous & $\tilde\Omega\left(\max\left\{d^{\frac{8}{7}}, d\log\left(\frac{MR}{\min\{\rho,1\}\epsilon}\right)\right\}\right)$ \newline (Theorem \ref{thm:binary-LB}) \newline  & $O\left(d^2 \log^2\left(\frac{d MR}{\min\{\rho,1\}\epsilon}\right)\right)$ \newline (Theorem \ref{thm:binary-UB-cont})  \\
General Binary 
& Mixed  & $\tilde\Omega\left(2^n\max\left\{d^{\frac{8}{7}}, d\log\left(\frac{MR}{\min\{\rho,1\}\epsilon}\right)\right\}\right)$ \newline (Theorem \ref{thm:binary-LB}) \newline  & $O\left(\log |\cI| + 2^nd(n+d)\log\left(\frac{MR}{\min\{\rho, 1\}\eps}\right)\right)$                        
\newline (Corollary \ref{cor:binary-UB-finite})\\

 & Continuous  & $\tilde\Omega\left(\max\left\{d^{\frac{8}{7}}, d\log\left(\frac{MR}{\min\{\rho,1\}\epsilon}\right)\right\}\right)$ \newline (Theorem \ref{thm:binary-LB})  & $O\left(\log |\cI| + d\log\left(\frac{MR}{\min\{\rho, 1\}\eps}\right)\right)$                        
\newline (Corollary \ref{cor:binary-UB-finite})
\\ \bottomrule
\end{tabular}
\caption{Summary of results on the information complexity of mixed-integer convex optimization for the class of instances $\I_{n,d,R,\rho,M}$ that have $n$ integer variables, $d$ continuous variables, the feasible region lies in the box $[-R,R]^{n+d}$ and has a ``$\rho$-deep feasible point'' on the optimal fiber, and the objective function is $M$-Lipschitz with respect to $\ell_{\infty}$ (see Definition~\ref{def:MICO-param}). The table presents simplified bounds showing only the main parameters.} 
\label{table:results}
\end{table}


\paragraph{Lower bounds.} 

Our first result is a ``transfer'' theorem that will be a powerful tool for obtaining concrete mixed-integer lower bounds under different oracles. This theorem lifts lower bounds for unconstrained optimization from the continuous to the mixed-integer setting. In particular, if one has a lower bound $\ell$ with respect to an oracle using first-order information (Definition~\ref{def:SBO}) for the information complexity for some family of purely continuous instances, then one can ``transfer" that lower bound to the mixed-integer case as $\Omega(2^{n}\ell)$
with access to the ``same" oracle in the $n+d$ dimensional space. For this notion, we require the set of permissible queries $\cH$ to be {\em hereditary}. Roughly speaking, this means that the set of queries has the same richness on a purely continuous space as it is in a mixed-integer space. We formally define hereditary queries in Section \ref{sec: proof of transfer theorem pure optimization}, and note that all 
of the types of permissible queries discussed in Definition~\ref{def:oracle-example} satisfy this property, except for $\cH^{bit}$ (the slightly enhanced $\cH^{bit^*}$ queries are hereditary).

\begin{theorem}\label{thm: transfer theorem pure optimization} Let $\cH_{n,d}$ be any class of hereditary permissible queries, and assume $\cH_{0,d}$ contains function threshold queries $h_c$ that answer $h_c(\g_\z^{\val}(\widehat{f}, \widehat C)) := sgn(\g_\z^{\val}(\widehat{f}, \widehat C)+c)$  for any $c\in \R$. 
Let $\epsilon \geq 0$. 
Suppose, for some $d\geq 1$, there exists a class $\I\subseteq \I_{0,d,R,\rho,M}$ of continuous convex (unconstrained) optimization problems in $\R^d$, and a first order chart $\cG_0$ for $\I$ such that $\icomp_{\epsilon}(\I,\cO(\cG_0, \cH_{0,d})) \geq \ell$. Suppose further that all instances in $\I$ have the same optimal value. Then, for any number of integer variables $n \geq 1$, there is a first order chart $\cG_n$ such that $\icomp_{\epsilon}(\I_{n,d,R,\rho,M}, \cO(\cG_n, \cH_{n,d})) \geq 2^{n-1}\ell$.
%
%
%
\end{theorem}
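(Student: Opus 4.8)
The plan is to reduce the mixed-integer instance class to a carefully chosen family indexed by subsets of the integer box, so that resolving the mixed-integer problem forces the algorithm to (essentially) solve a separate continuous subproblem on each integer fiber. Concretely, let $X := [-R,R]^n \cap \Z^n$, which has $|X| = (2\lfloor R\rfloor + 1)^n \geq 2^n$ points (we only need a set of size $2^n$, so pick any such subset $X_0 \subseteq X$). For each integer point $\x \in X_0$ we will ``plant'' a continuous instance $(f_{\x}, C_{\x}) \in \I$ on the fiber $\{\x\} \times \R^d$, and glue these together into a single mixed-integer instance. Since all instances in $\I$ share a common optimal value $v^\star$, each fiber's planted subproblem has the same optimum value, so the global optimum of the glued instance is also $v^\star$ and is attained on every fiber where we planted a hard instance. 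The key point: to certify an $\epsilon$-approximate solution, the algorithm must identify an $\epsilon$-optimal point on \emph{some} fiber, but because the adversary can make the planted instance on each fiber be any member of $\I$ consistent with the answers so far, the algorithm cannot commit to a fiber until it has, on that fiber, driven the continuous information complexity down to the point of having a common $\epsilon$-solution. A standard adversary argument then shows the algorithm must do $\geq \ell$ work on at least half the fibers before it can stop, yielding $\geq 2^{n-1}\ell$ queries.

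The main technical work is building the glued objective and feasible region and supplying a first-order chart $\cG_n$ on $\R^{n+d}$ whose responses, when restricted to queries at points in a given fiber $\{\x\}\times\R^d$, mimic exactly the responses of the continuous chart $\cG_0$ for the planted instance on that fiber — while for queries at points \emph{off} the integer lattice (or between fibers) the chart reveals nothing useful about which instance was planted. For the objective, take something like $f(\x,\y) := f_{\x}(\y) + g(\x)$ where $g$ is a fixed convex function on $\R^n$ (e.g. a large-curvature convex function minimized at a lattice point, or simply $g \equiv 0$ if we arrange the $f_{\x}$'s to already share their optimum) chosen so that $f$ is globally convex on $\R^{n+d}$; convexity of the glued function across fibers is the first thing to check carefully, and it is where we use that the continuous instances are ``unconstrained'' (domain the full box) with a common optimal value. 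For the feasible region, intersect the box with constraints that on fiber $\x$ cut out $C_{\x}$ and also respect the $\rho$-deep-point condition (ii) — here we need the planted continuous instances in $\I \subseteq \I_{0,d,R,\rho,M}$ to already carry such a deep point, which they do by hypothesis, and we just need the gluing not to destroy it; similarly Lipschitzness (iii) is a fiberwise condition so it transfers directly.

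For the chart $\cG_n$: at a query point $\z = (\x,\y)$ with $\x \in X_0 \cap \Z^n$, define $\g^{\val}_{\z}$, $\g^{\sub}_{\z}$, $\g^{\sepp}_{\z}$ by taking the corresponding $\cG_0$-response for $(f_{\x}, C_{\x})$ at $\y$ and padding the integer coordinates of the subgradient/separator in a fixed instance-independent way (e.g. using the subgradient of $g$ at $\x$ for the integer block, and for separation, if $\y \notin C_{\x}$ return the $\cG_0$ separator padded with zeros in the $\x$-block — valid because $C \cap (\{\x\}\times\R^d) = \{\x\}\times C_{\x}$). At a query point whose integer part is not in $\Z^n$, or not in $X_0$, return a response determined by a fixed default instance, independent of which family member was planted elsewhere — so such queries give zero information distinguishing instances in our family. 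This is where the \emph{hereditary} hypothesis on $\cH_{n,d}$ is used: it guarantees that any permissible query $h \in \cH_{n,d}$, when its argument is one of these padded vectors, ``factors through'' a permissible query in $\cH_{0,d}$ on the unpadded continuous vector, so the binary (or full) response the algorithm sees at a fiber-$\x$ query is exactly a response it could have obtained from the continuous oracle $\cO(\cG_0,\cH_{0,d})$ on $(f_{\x},C_{\x})$. (The extra assumption that $\cH_{0,d}$ contains the function-threshold queries $h_c$ is presumably needed because the gluing shifts function values by $g(\x)$, so a value query at a fiber-$\x$ point reveals $f_{\x}(\y) + g(\x)$, i.e. a shifted value — hence we must be able to simulate it as a legal continuous query, which $h_c$-type threshold queries, being closed under shifts, allow.)

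The adversary argument finishing the proof runs as follows. Fix any query strategy $D$ for the mixed-integer problem. Run $D$; maintain, for each fiber $\x \in X_0$, the set $\I_{\x}$ of continuous instances still consistent with all answers given so far to queries landing in fiber $\x$ (queries off $X_0$-fibers are answered by the default and constrain nothing). By the simulation above, the sequence of fiber-$\x$ queries together with our answers is a valid transcript of $\cO(\cG_0,\cH_{0,d})$ on any instance in $\I_{\x}$; hence, by the definition of $\icomp_\epsilon(\I,\cO(\cG_0,\cH_{0,d})) \geq \ell$, as long as fewer than $\ell$ queries have hit fiber $\x$ the set $\I_{\x}$ still has no common $\epsilon$-approximate solution within that fiber. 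Now the adversary keeps all fibers ``open'' as long as possible; a counting argument shows that after any $k < 2^{n-1}\ell$ total queries, strictly more than half of the $2^n$ fibers have received fewer than $\ell$ queries, so on each such fiber there is no common $\epsilon$-solution, and the adversary can choose the planted instance on that fiber adversarially against whatever single point $(\x^0,\y^0)$ the algorithm might output: if $\x^0$ is itself one of these under-queried fibers, pick the fiber-$\x^0$ instance so that $\y^0$ is not $\epsilon$-optimal there; if $\x^0$ is not such a fiber (or not in $X_0$), pick any under-queried fiber and any of its consistent instances — the global optimum value is $v^\star$ attained there, and $f(\x^0,\y^0) \geq v^\star + g(\x^0) - g(\x^\star) \ge v^\star$, or more simply $f(\x^0,\y^0)$ exceeds $v^\star + \epsilon$ because $(\x^0,\y^0)$ is not $\epsilon$-optimal on its own fiber for the adversary's choice. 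Thus no valid algorithm can stop before $2^{n-1}\ell$ queries, giving $\icomp_\epsilon(\I_{n,d,R,\rho,M},\cO(\cG_n,\cH_{n,d})) \geq 2^{n-1}\ell$.

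The step I expect to be the main obstacle is the chart construction together with verifying the hereditary hypothesis does exactly what is needed: one must be sure that (a) the glued instance $(f,C)$ genuinely lies in $\I_{n,d,R,\rho,M}$ (global convexity across fibers, membership in the box, existence of a $\rho$-deep point on the \emph{optimal} fiber — note the optimum is attained on every planted fiber, so this is fine, but it needs checking), and (b) the padded first-order responses are legitimate (a padded subgradient is a genuine subgradient of the glued $f$; a zero-padded separator genuinely separates $\z$ from $C$), and (c) every permissible query in $\cH_{n,d}$ evaluated on a padded response can be matched by a permissible query in $\cH_{0,d}$ on the continuous response, \emph{uniformly over the family} — this last point is the crux and is precisely what ``hereditary'' should be defined to give us in Section~\ref{sec: proof of transfer theorem pure optimization}, with the function-value shift handled by the assumed threshold queries $h_c$. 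Everything else — the counting/adversary argument and the fiberwise transfer of Lipschitz and deep-point conditions — is routine.
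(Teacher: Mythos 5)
Your high-level plan — plant one hard continuous instance on each integer fiber, and count how many queries must land near each fiber — matches the paper's. But the technical core is either wrong or left unresolved in exactly the places the paper has to work hardest, and the proposal as written would not go through.

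\textbf{The gluing $f(\x,\y) := f_{\x}(\y) + g(\x)$ is not convex.} A function that restricts to different $f_{\x}$ on different fibers and differs from fiber to fiber only by an additive $g(\x)$ is, in general, not a convex function of $(\x,\y)$: take $n=d=1$, $f_0(\y)=|\y|$, $f_1(\y)=|\y-1|$, $g\equiv 0$, and check the midpoint of $(0,1)$ and $(1,0)$. You flag this (``the first thing to check carefully'') but the hand-wave about unconstrainedness and shared optimum does not fix it. The paper instead builds $\psi_{\tF}(\x,\y)=\max_{\bar\x\in\{0,1\}^n}\max\{f_{\bar\x}(\y)+\langle \mathbf{M}_{\bar\x},\x-\bar\x\rangle,\ \OPT\}$ with a steep tilt $\pm 3MR$; this is a max of convex functions, hence convex for free, and the tilt plus truncation are calibrated so that at any point the max is achieved by the extension from the \emph{nearest} fiber.

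\textbf{``Answer off-fiber queries by a fixed default'' is unsound.} Once the function is convex on the whole box, its values at fractional $\x$ are pinned down by its values on the fibers; the adversary cannot return a default independent of the planted $f_{\x}$'s and remain consistent. This is exactly the issue the paper identifies as the crux: queries between fibers \emph{do} reveal information about the nearest planted function. The paper resolves it structurally — Lemma~\ref{lemma:psi} shows $\psi_\tF$ near $(\tilde\x,\y)$ equals $\hat f_{r(\tilde\x)}(\tilde\x,\y)$, so an off-fiber query can be answered by querying the continuous adversary on fiber $r(\tilde\x)$ (after one extra threshold query to decide whether the value is truncated to $\OPT$). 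That extra query is precisely where the factor $2$ enters and why the bound is $2^{n-1}\ell$ rather than $2^n\ell$; your counting argument does not account for it.

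\textbf{What the adversary does after a fiber receives $\ell$ queries is not addressed.} You say the adversary ``keeps all fibers open as long as possible,'' but after $\ell$ queries the adversary must eventually become consistent with a single function on that fiber, and the proof must then argue that fiber is globally useless. The paper builds this into \textbf{Adv-Cont+}: after $\ell-1$ rounds it commits to the pointwise max of a finite bad collection, which (by Lemma~\ref{lemma:maxOPT}) has minimum value $> \OPT+\eps$, so that fiber can never host an $\eps$-approximate solution. Without this mechanism, your final ``pick an under-queried fiber and an instance where the output is not $\eps$-optimal there'' step is not licensed — you must also rule out that the algorithm's reported point is $\eps$-optimal on some \emph{heavily queried} fiber, which requires forcing those fibers to be bad.

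In short: the strategic outline is the same as the paper's, and the roles you assign to ``hereditary'' and to the threshold queries are essentially correct, but the construction of the mixed-integer instances, the treatment of fractional queries, and the after-$\ell$-queries behavior of the per-fiber adversaries are all genuine gaps, and the first two of them would make the proposed proof fail as stated.
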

\medskip 




As a first consequence of this transfer theorem we obtain a sharpened lower bound for the standard full-information first-order oracle case for mixed-integer problems. For this setting, Basu~\cite{basu2021complexity} proved the lower bound of $\Omega\big(2^n\cdot d \log\big(\frac{2R}{3\rho}\big)\big)$. However, this bound is independent of the Lipschitz constant $M$ of the objective function, and thus does not capture the hardness of the problem as $M$ increases. By applying Theorem \ref{thm: transfer theorem pure optimization} to the classical lower bound of $\Omega \big(d\log \big( \frac{MR}{\eps}\big)\big)$ for continuous convex optimization with the standard first-order oracle by Nemirovski and Yudin \cite{Nemirovski_Yudin_book}, and combining the result with the existing mixed-integer lower bound, we obtain the following improved bound.

\begin{corollary}\label{cor:standard-LB} There exists a first-order chart $\mathcal{G}$ such that for the full-information first-order oracle based on $\mathcal{G}$ (i.e., $\mathcal{H}$ consists of the identity functions) we have
$$\icomp\textstyle{_\epsilon}(\cI_{n,d,R,\rho,M}, \cO(\mathcal{G}, \mathcal{H})) = \Omega \left(2^n \left(1 + d \log\left(\frac{MR}{\min\{\rho,1\}\epsilon}\right)\right)\right).$$
\end{corollary}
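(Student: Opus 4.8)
The plan is to derive Corollary~\ref{cor:standard-LB} as a direct application of Theorem~\ref{thm: transfer theorem pure optimization}, combined with the classical Nemirovski--Yudin lower bound for purely continuous convex optimization, and then patch in the existing Lipschitz-free mixed-integer bound from \cite{basu2021complexity} to handle the regime where the $\log$ term is small.

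First, I would invoke the classical lower bound: for the class $\I_{0,d,R,\rho,M}$ of $M$-Lipschitz convex functions on the box $[-R,R]^d$ with a $\rho$-deep feasible point, the full-information first-order oracle requires $\Omega\bigl(d\log\bigl(\frac{MR}{\min\{\rho,1\}\epsilon}\bigr)\bigr)$ queries. The standard Nemirovski--Yudin construction gives $\Omega(d\log(MR/\epsilon))$; one has to be slightly careful that the hard instances can be taken to all share the same optimal value (which the transfer theorem requires) and to satisfy the $\rho$-deepness condition — this is why the $\min\{\rho,1\}$ appears. Since the identity function is trivially a function threshold query composed appropriately — actually here we just need that $\cH_{0,d}$ (identity functions) contains the threshold queries $h_c$, which it does not literally, so I would instead note that the identity queries are \emph{more} informative than threshold queries, hence a lower bound proven against an oracle that has the identity functions plus threshold queries still applies; more carefully, the hypothesis of Theorem~\ref{thm: transfer theorem pure optimization} asks that $\cH_{0,d}$ \emph{contain} the $h_c$, so I would take $\cH_{0,d}$ to be the identity functions augmented with the threshold queries $h_c$ — the Nemirovski--Yudin lower bound is robust to adding such threshold queries (they reveal no more than the function value itself, which the identity query already gives), so $\icomp_\epsilon(\I, \cO(\cG_0,\cH_{0,d})) = \Omega(d\log(MR/(\min\{\rho,1\}\epsilon)))$ still holds for the appropriate hard family $\I$.

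Next, apply Theorem~\ref{thm: transfer theorem pure optimization} with this $\I$ and $\ell = \Omega(d\log(MR/(\min\{\rho,1\}\epsilon)))$: since the identity functions (plus threshold queries) form a hereditary class, there is a first-order chart $\cG_n$ in $\R^{n+d}$ with $\icomp_\epsilon(\I_{n,d,R,\rho,M}, \cO(\cG_n, \cH_{n,d})) \geq 2^{n-1}\ell = \Omega\bigl(2^n d\log\bigl(\frac{MR}{\min\{\rho,1\}\epsilon}\bigr)\bigr)$. The oracle here is the full-information first-order oracle based on $\cG_n$. Then, separately, the bound $\icomp_\epsilon(\I_{n,d,R,\rho,M}, \cO) = \Omega(2^n)$ holds unconditionally (e.g.\ from the $\Omega\bigl(2^n d \log(2R/3\rho)\bigr)$ bound of \cite{basu2021complexity}, or even just from the fact that one must in the worst case identify the correct integer point among $\approx 2^n$ candidates on each of the relevant fibers). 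Taking the maximum of the two lower bounds and absorbing constants gives $\Omega\bigl(2^n(1 + d\log(\frac{MR}{\min\{\rho,1\}\epsilon}))\bigr)$, as claimed. One subtlety is that the two lower bounds are proven with respect to possibly different first-order charts; I would reconcile this by noting that the chart $\cG_n$ from the transfer theorem can be taken to be compatible with (or a refinement of) whatever chart the $\Omega(2^n)$ bound uses, or simply observe that the $\Omega(2^n)$-type bound is chart-independent since it is an information-theoretic counting bound over the $2^n$ integer fibers.

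The main obstacle I anticipate is not conceptual but bookkeeping: verifying that the Nemirovski--Yudin hard family can be realized inside $\I_{0,d,R,\rho,M}$ with all instances sharing a common optimal value and satisfying the $\rho$-deepness condition, with the dependence on $\rho$ entering only through $\min\{\rho,1\}$ — this likely requires rescaling the classical construction and checking that the Lipschitz constant and box constraints are respected. A secondary point requiring care is confirming that augmenting the identity-function oracle with the threshold queries $h_c$ does not decrease the lower bound (it should not, since those queries are simulable from the function value), so that the hypothesis of Theorem~\ref{thm: transfer theorem pure optimization} is genuinely met by the full-information first-order oracle.
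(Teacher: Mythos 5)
Your proposal is correct and follows essentially the same route the paper takes: the paper itself states that Corollary~\ref{cor:standard-LB} follows ``by applying Theorem~\ref{thm: transfer theorem pure optimization} to the classical lower bound of $\Omega\big(d\log\big(\frac{MR}{\eps}\big)\big)$ \ldots and combining the result with the existing mixed-integer lower bound'' from \cite{basu2021complexity}, which is exactly your two-step plan. What you add that the paper leaves implicit is the careful handling of the mismatch between the hypothesis of Theorem~\ref{thm: transfer theorem pure optimization} (that $\cH_{0,d}$ contains threshold queries $h_c$) and the full-information oracle (whose $\cH_{0,d}$ consists only of identity maps): your fix --- augment $\cH_{0,d}$ with the $h_c$, observe that these are simulable from the identity/value query and hence do not change the continuous lower bound, and note the hereditary condition is only a one-directional richness requirement on $\cH_{0,d}$ so the augmentation is harmless --- is sound, and in fact more explicit than the paper. (Alternatively, one can observe that the paper's \emph{full-information} proof in Section~\ref{sec: proof of transfer theorem pure optimization} never makes a separate threshold query at all, since $\textbf{Adv-MI}$ receives the full value and can compute the truncation decision itself, giving the slightly stronger $2^n\ell-1$.) You are also right that the two lower bounds in the max may a priori be realized by different charts; the resolution is the one you give --- for any fixed parameter regime one of the two terms dominates up to a factor of two, so it suffices to pick the corresponding chart --- and your other bookkeeping concerns (common optimal value in the N--Y family, $\rho$-deepness, the $\min\{\rho,1\}$ rescaling) are exactly the ones that need checking but all go through with the standard construction.
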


Moving on to ``non-standard'' oracles, we consider mixed-integer convex optimization under the general binary oracle. Recall from Definition \ref{def:oracle-example} that this means that the algorithm can make any binary query on subgradients/separating hyperplanes. Despite the power of these queries, we prove a separation between the information complexity under the standard full-information first-order oracle and the general binary oracle, i.e., the latter provides quantitatively less information for solving the problem. For example, in the pure continuous setting, $O(d)$  queries suffice (ignoring the logarithmic dependence on other parameters) under the full-information first-order oracle. However, we show that $\Omega(d^{8/7})$ queries are needed under the general binary oracle. More precisely, we show the following lower bound.

\begin{theorem}\label{thm:binary-LB}
For every $n \geq 0$, there exists a first-order chart $\mathcal{G}$ such that for the general binary oracle based on $\mathcal{G}$ 
we have
$$\icomp\textstyle{_\epsilon}(\cI_{n,d,R,\rho,M}, \cO(\mathcal{G}, \mathcal{H})) = \tilde\Omega \left( 2^n \left(1+\max\left\{d^{\frac{8}{7}}, d \log\left(\frac{MR}{\min\{\rho,1\}\epsilon}\right)\right)\right\}\right),$$ where $\tilde \Omega$ hides polylogarithmic factors in $d$.
\end{theorem}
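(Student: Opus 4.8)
The plan is to combine the transfer theorem (Theorem \ref{thm: transfer theorem pure optimization}) with a new lower bound in the purely continuous setting, and then take the maximum with the known counting-based $\Omega(2^n d\log(MR/(\min\{\rho,1\}\epsilon)))$ bound inherited from the full-information case (which only gets easier to beat, not harder, when restricting to binary queries, since any binary oracle can be simulated by, and is thus no more powerful than, the full first-order oracle up to the bit-extraction overhead — but here we want the reverse direction, so instead we transfer the continuous $\Omega(d\log(\cdot))$ bound of Nemirovski--Yudin directly through Theorem \ref{thm: transfer theorem pure optimization}, exactly as in Corollary \ref{cor:standard-LB}, noting that the identity queries used there are a special case and the general binary oracle must in particular answer those; more carefully, one shows the $d\log(\cdot)$ term survives because narrowing down the minimizer to $\epsilon$-precision in $d$ coordinates is an information-theoretic bottleneck regardless of query type). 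The genuinely new content is the $d^{8/7}$ term, and by Theorem \ref{thm: transfer theorem pure optimization} (whose hereditary hypothesis is satisfied by general binary queries, as remarked after the theorem) it suffices to exhibit a class $\I \subseteq \I_{0,d,R,\rho,M}$ of continuous instances, all with the same optimal value, and a first-order chart $\cG_0$, such that $\icomp_\epsilon(\I, \cO(\cG_0, \cH_{0,d})) = \tilde\Omega(d^{8/7})$ under the general binary oracle; the theorem then lifts this to $\tilde\Omega(2^{n-1} d^{8/7})$, and combining with the transferred logarithmic bound gives the stated $\max$.

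For the continuous $\tilde\Omega(d^{8/7})$ lower bound, the approach I would take is a communication/information-theoretic argument against adaptive binary queries. Each query returns one bit, so after $k$ queries the transcript partitions the instance family $\I$ into at most $2^k$ classes; to force a contradiction we need a family $\I$ of size $2^{\omega(d^{8/7})}$ such that no single point is a common $\epsilon$-solution for any set of more than (say) $|\I|/2^{\,\tilde O(d^{8/7})}$ instances. The standard way to build such a family is to encode a hard combinatorial object: take objective functions of the form $f_v(\y) = \max_i (\langle a_i, \y\rangle - b_i^{(v)})$ or Nemirovski--Yudin-style piecewise-linear functions whose minimizer is a codeword $v$ from an error-correcting code, so that distinct codewords have far-apart minimizers and hence no common $\epsilon$-solution, while the first-order chart $\cG_0$ is chosen adversarially so that the subgradient/value responses at any queried point leak very little about which codeword is in play. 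The exponent $8/7$ strongly suggests this is not a pure counting bound (which would give $d\log(1/\epsilon)$, already covered) but rather goes through a reduction to a known hard problem for one-bit queries — most plausibly, a lower bound for convex optimization (or equivalently, approximate feasibility / mean estimation) with one-bit-per-iteration oracles in the spirit of memory--query tradeoff or quantization lower bounds. I would look for a reduction from such a primitive: embed $m = \Theta(d^{8/7})$ "hard directions" and argue via an anti-concentration or packing argument that each binary query can resolve only $o(1)$ of them on average, so $\tilde\Omega(d^{8/7})$ queries are required.

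The main obstacle, and the step I expect to absorb most of the work, is the adversarial construction of the first-order chart $\cG_0$ together with the matching instance family: one must simultaneously ensure (a) every instance lies in $\I_{0,d,R,\rho,M}$ with the prescribed parameters, (b) all instances share a common optimal value (required by the transfer theorem), (c) distinct instances have no common $\epsilon$-approximate solution — i.e. a good packing of near-optimal sets — and (d) the chart's responses $\g^{\sepp}_\z, \g^{\val}_\z, \g^{\sub}_\z$ can be answered "consistently" in a way that reveals $o(d^{8/7})$ bits total, so that the adversary can maintain a large surviving set of instances after every query. Reconciling (c), which wants the instances spread out, with (d), which wants them indistinguishable, is the crux; this is typically handled by making the "hard" part of each instance live in a low-sensitivity region (e.g. the functions agree on a large ball around every point an algorithm would sensibly query, differing only near their respective optima) and by a potential-function argument showing that any binary query either eliminates a negligible fraction of surviving instances or is "wasted." Once the continuous bound is established with all of (a)--(d) verified, invoking Theorem \ref{thm: transfer theorem pure optimization} and then taking the maximum with the transferred Nemirovski--Yudin bound is routine and yields the claimed $\tilde\Omega\big(2^n(1+\max\{d^{8/7}, d\log(MR/(\min\{\rho,1\}\epsilon))\})\big)$.
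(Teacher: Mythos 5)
Your high-level outline is right: prove a continuous $\tilde\Omega(d^{8/7})$ bound for binary queries, transfer via Theorem~\ref{thm: transfer theorem pure optimization}, and combine with the logarithmic term. You even name memory--query tradeoffs as a plausible source for the $d^{8/7}$ exponent. But there is a genuine gap: the continuous $\tilde\Omega(d^{8/7})$ bound is the entire hard part of the theorem, and your sketch of how to prove it --- "embed $m=\Theta(d^{8/7})$ hard directions and argue via an anti-concentration or packing argument that each binary query can resolve only $o(1)$ of them on average" --- is not a proof. Establishing any superlinear lower bound of this type against arbitrary adaptive binary queries is precisely what is hard; you are deferring to a packing/anti-concentration argument whose existence is the open question.

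The paper does something both sharper and simpler: it does \emph{not} construct a new hard family. It proves a black-box reduction (Proposition~\ref{prop:reduction}) showing that an algorithm that uses $T$ binary queries is equivalent to a ``first-order query strategy with information memory'' making $T$ iterations with at most $T$ bits of memory --- because each binary query just extends the memory string by one bit, and conversely the bits of the memory state are themselves binary functions of the first-order information and thus implementable as binary queries. It then invokes the Marsden et al.\ theorem directly (Theorem~\ref{thm:mem-constrained}): for any $\delta \in [0,1/4]$ there is a hard class where any first-order algorithm needs either $d^{1.25-\delta}$ memory or $\tilde\Omega(d^{1+\frac{4}{3}\delta})$ iterations. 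A $T$-query binary-oracle algorithm yields (via the converse direction of the reduction) a first-order algorithm with $T$ iterations and $\leq T$ memory, so $T \geq \min\{d^{1.25-\delta},\, \tilde\Omega(d^{1+\frac43\delta})\}$; setting $\delta = 3/28$ balances the two exponents at $8/7$. That balancing step is the source of the otherwise mysterious $8/7$. The lift to $n>0$ then goes through Theorem~\ref{thm: transfer theorem pure optimization} as you say, and the logarithmic term is merged in via Corollary~\ref{cor:standard-LB}. So the missing idea in your proposal is the observation that binary-query information complexity \emph{is} information-memory complexity (up to a factor), which replaces your open-ended construction by a citation plus a one-line balancing argument.
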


We note that, since $\cH^{bit}$, $\cH^{bit^*}$ and $\cH^{dir}$ are more restrictive than the general binary oracle, this lower bound applies to oracles with those permissible queries as well. The proof of this result relies on a connection between information complexity and \emph{memory constrained} algorithms for convex optimization,  and the recent lower bound for the latter from~\cite{marsden2022efficient} (in addition to Theorem \ref{thm: transfer theorem pure optimization} for lifting the result to the mixed-integer case).



\paragraph{Upper bounds.} We now present upper bound results that illustrate the connection between information complexity based on full-information first-order oracles and information complexity based on binary queries on separating hyperplanes and subgradients. We first
formalize the intuition that by making roughly $O\left((n+d)\log\left(\frac{1}{\epsilon}\right)\right)$ bit or inner product sign queries on a separating hyperplane or subgradient, one should have enough information to solve the problem as with full information (Theorems~\ref{thm:binary-UB-mixed} and~\ref{thm:binary-UB-cont}). Next, in Theorem~\ref{thm:binary-UB-finite-transfer} and Corollary~\ref{cor:binary-UB-finite}, we show how this natural bound can be improved in certain settings.
\begin{theorem}\label{thm:binary-UB-mixed} Assume $d \geq 1$. For $U > 0$, consider the subclass of instances of $\I_{n,d,R,\rho,M}$ whose objective function values lie in $[-U,U]$, and the fiber over the optimal solution contains a $\z$ such that the $(n+d)$-dim $\rho$-radius $\ell_\infty$ ball centered at $\z$ is contained in $C$. 
There exists a query strategy for this subclass that reports an $\epsilon$-approximate solution by making at most
%
$$O \left(2^n d\, (n+d) \log\left(\frac{MR}{\min\{\rho,1\}\epsilon}\right)\right) \cdot \left((n+d) \log\left(\frac{(n+d) MR}{\rho\epsilon}\right) + \log \frac{U}{\epsilon} \right)$$
queries to an oracle $\cO(\mathcal{G},\mathcal{H})$, where $\mathcal{G}$ is any first-order chart and $\mathcal{H}$ is either $\mathcal{H}^{\textup{bit}}$ or $\mathcal{H}^{\textup{dir}}$.
\end{theorem}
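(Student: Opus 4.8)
The plan is to simulate, query by query, a known full-information mixed-integer cutting-plane algorithm. Let $\cA$ be an algorithm attaining the full-information upper bound $T:=O\!\big(2^n d\,(n+d)\log\tfrac{MR}{\min\{\rho,1\}\epsilon}\big)$ of Oertel~\cite{oertel2014integer} and Basu--Oertel~\cite{basu2017centerpoints}; at each iteration $\cA$ picks a point $\z\in[-R,R]^{n+d}$ and uses either a separating hyperplane for $\z$ and $\widehat C$, or a subgradient of $\widehat f$ at $\z$ together with $\widehat f(\z)$. The first task is to show that $\cA$ tolerates $\delta$-perturbed first-order information — a coordinatewise $\delta$-approximation $\tilde\g$ of whichever vector is returned, and an additive-$\epsilon$ approximation of each queried value — for $\delta$ only polynomially small in $\tfrac{\min\{\rho,1\}\epsilon}{(n+d)MR}$, so that $\log(1/\delta)=O\!\big(\log\tfrac{(n+d)MR}{\rho\epsilon}\big)$. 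The point is that each halfspace $\cA$ adds has the form $\{\z':\langle\tilde\g,\z'\rangle\le\langle\tilde\g,\z\rangle+\eta\}$ with a small slack $\eta=\Theta((n+d)\delta R)$ that we insert to stay safe, and such a halfspace still retains every feasible point whose objective is within $\epsilon$ of optimal as well as the entire $(n+d)$-dimensional ball $B_\infty(\z^\star,\rho)\subseteq\widehat C$ around the strictly feasible point $\z^\star$ on the optimal fiber. This is exactly where the theorem's hypothesis of a full-dimensional $\rho$-ball inside $\widehat C$ enters: it provides robust feasibility near the optimal fiber so that approximate separation cuts cannot erase it. Since every cut individually preserves this target set, the perturbations do not compound, and the usual volume/width accounting shows $\cA$ halts within $O(T)$ iterations; function values are only ever used in comparisons of candidates and in certifying $\epsilon$-optimality, so an additive-$\epsilon$ approximation of each suffices.

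\textbf{Simulating one query.} Fix a queried point $\z$. For a subgradient response, $\|\g^{\sub}_\z(\widehat f,\widehat C)\|_1\le M$ by the Lipschitz assumption, so every relevant coordinate lies in $[-M,M]$ (the continuous coordinates from this bound; the integer coordinates are handled by $\cA$'s branching / box structure); a separating-hyperplane response has unit norm, so every coordinate lies in $[-1,1]$; the function value at a point of $\widehat C$ lies in $[-U,U]$. With $\mathcal H^{\textup{bit}}$, reading the sign bit, the $O(\log M)$ integer bits, and the top $O(\log(1/\delta))$ fractional bits of one coordinate recovers it to additive error $\delta$ using $O(\log(M/\delta))$ queries; with $\mathcal H^{\textup{dir}}$, binary searching $c$ in $\textup{sgn}(\langle\e_i,\g^{\sub}_\z(\widehat f,\widehat C)\rangle-c)$ does the same in the same number of queries. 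Over all $n+d$ coordinates this is $O\!\big((n+d)\log\tfrac{(n+d)MR}{\rho\epsilon}\big)$ queries, and reconstructing the value to additive $\epsilon$ costs $O(\log\tfrac U\epsilon)$. A ``$\z\in\widehat C$'' response is detected because the true separation answer is either $\0$ or a unit vector: having reconstructed every coordinate to error $\delta<\tfrac1{2\sqrt{n+d}}$, we declare $\z\in\widehat C$ precisely when all reconstructed coordinates have magnitude below $\tfrac1{2\sqrt{n+d}}$. The first-order chart $\mathcal G$ is irrelevant to this step: whichever valid hyperplane/subgradient it returns, our reconstruction yields a $\delta$-approximation of that object, and the robust $\cA$ above does not care which valid object it receives.

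\textbf{Combining, and the main obstacle.} Running the robust $\cA$ and answering each of its at most $T$ oracle calls by the above procedure produces an $\epsilon$-approximate solution (after a harmless constant rescaling of $\epsilon$) using
\[
T\cdot O\!\Big((n+d)\log\tfrac{(n+d)MR}{\rho\epsilon}+\log\tfrac U\epsilon\Big)
\]
queries to $\cO(\mathcal G,\mathcal H)$, which is the stated bound. I expect the main obstacle to be the robustness claim: one must re-open the analysis of the $2^n$-style mixed-integer algorithm — in particular its fiber recursion and its flatness/restriction steps — and check that $\delta$-perturbed cuts and values neither inflate the iteration count nor compound across the recursion, and that $\delta$ polynomially small (hence only $O(\log\tfrac{(n+d)MR}{\rho\epsilon})$ bits per coordinate) is enough. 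Everything after that is routine bookkeeping.
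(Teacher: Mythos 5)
Your overall architecture matches the paper's: first argue that a full-information centerpoint cutting-plane algorithm tolerates $\delta$-approximate separation and subgradient vectors and $\epsilon$-approximate function values (the paper's Theorem~\ref{thm:UBOpt}), and then implement those approximate oracles via $O((n+d)\log(1/\delta))$ binary queries (the paper's Lemma~\ref{lemma:bitQuery-and-dirDer}). Two points of comparison are worth making.

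For the $\mathcal H^{\textup{dir}}$ case your implementation is genuinely simpler than the paper's. You exploit the free threshold $c$ in the query $h^{\sub}_{\u,c}(\g)=\mathrm{sgn}(\langle\u,\g\rangle-c)$, take $\u=\e_i$, and binary-search $c$ to recover the $i$-th coordinate of the true subgradient/separating normal to precision $\delta$, giving an $\ell_\infty$-approximation directly. That is correct under the definition of $\mathcal H^{\textup{dir}}$ in Definition~\ref{def:oracle-example}, yields the same $O\big((n+d)\log\frac{(n+d)MR}{\rho\epsilon}\big)$ count, and makes Lemma~\ref{lemma:gradapprox} unnecessary for this theorem. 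The paper instead proves Lemma~\ref{lemma:gradapprox}, a cone-bisection scheme that uses only $c=0$ (pure direction) queries; that is a strictly harder query model and yields a reusable, more general primitive (direction estimation without thresholds), which your coordinate search does not. So the two approaches are both correct, and yours buys simplicity at the cost of using the extra freedom in $c$.

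The gap is your robustness claim. You explicitly defer the statement that the centerpoint method works with $\rho'$-approximate separation, $\epsilon'$-approximate value cuts, and $\epsilon'$-approximate value comparisons, calling it "routine bookkeeping," but this is precisely the main content of the theorem's proof — it is Theorem~\ref{thm:UBOpt} in the paper, proved via Lemmas~\ref{lemma:centerpoint} and~\ref{lemma:optBall} and a volume argument on the mixed-integer volume of the version set. Your sketch of the mechanism (approximate cuts preserve the $\rho$-ball on the optimal fiber, so perturbations do not compound) is the right idea, but stating it is not proving it: you need the deep-point Lemma~\ref{lemma:optBall} to locate a ball whose radius scales with $\epsilon\rho/(MR)$, and the volume accounting with the mixed-integer centerpoint constant $1/(2^n(d+1))$ to land on $T=O\big(2^nd(n+d)\log\frac{MR}{\min\{\rho,1\}\epsilon}\big)$. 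Also, you describe $\cA$ as having a "fiber recursion" with "flatness/restriction steps"; the algorithm actually used (Oertel/Basu–Oertel) has neither — it is a single cutting-plane loop whose $2^n$ factor enters through the centerpoint guarantee — so be careful that the robustness argument you supply is for the algorithm you are actually invoking.

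One minor point: your claim $\|\g^{\sub}_\z\|_1\le M$ relies on full $(n+d)$-dimensional Lipschitzness, while Definition~\ref{def:MICO-param} only asserts $M$-Lipschitzness on mixed-integer fibers (in $\y$). You gesture at handling the integer coordinates "by $\cA$'s branching / box structure," but that needs to be made precise; note the paper sidesteps this by querying subgradients only at mixed-integer centerpoints.
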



Prescribing an {\em a priori} range for objective function values is not a serious restriction for two reasons: i) The difference between the maximum and the minimum values of an objective function in $\I_{n,d,R,\rho,M}$ is at most $2MR$, and ii) All optimization problems whose objective functions differ by a constant are equivalent. We also comment that while we assume $d\geq 1$ in Theorem~\ref{thm:binary-UB-mixed}, similar bounds can be established for the $d=0$ (pure integer) case. We omit this here because a unified expression for the $d=0$ and $d\geq 1$ cases becomes unwieldy and difficult to parse.

The main idea behind Theorem~\ref{thm:binary-UB-mixed} is to show that existing methods with the best known information complexity for mixed-integer convex optimization that use full-information first-order oracles can also work with approximate separation and subgradient oracles that return desired approximations of the true vectors (with no loss in the information complexity). Then one shows that one can produce these approximations with roughly $O\left((n+d)\log\left(\frac{1}{\epsilon}\right)\right)$ bit or inner product sign queries on a separating hyperplane or subgradient. With bit queries, this is just a matter of probing enough bits of each coordinate of the vector. The case with inner product sign queries is a bit more involved and our main tool is a result that shows how to approximate any vector up to desired accuracy with such queries (Lemma~\ref{lemma:gradapprox}).


Subsequently, using similar techniques we present an enhanced upper bound for the scenario where $n=0$ (pure continuous case).

\begin{theorem}\label{thm:binary-UB-cont} For $U > 0$, consider the subclass of instances  of $\I_{n,d,R,\rho,M}$ where $n=0$ (pure continuous case) and the objective function values lie in $[-U,U]$. There exists a query strategy for this subclass that reports an $\epsilon$-approximate solution by making at most
%
$$O \left(d \log\left(\frac{MR}{\min\{\rho,1\}\epsilon}\right)\right) \cdot \left(d \log\left(\frac{d MR}{\rho\epsilon}\right) + \log \frac{U}{\epsilon} \right)$$
queries to an oracle $\cO(\mathcal{G},\mathcal{H})$, where $\mathcal{G}$ is any first-order chart and $\mathcal{H}$ is either $\mathcal{H}^{\textup{bit}}$ or $\mathcal{H}^{\textup{dir}}$. 
\end{theorem}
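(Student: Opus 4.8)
The plan is to reuse the machinery behind Theorem~\ref{thm:binary-UB-mixed}, specialized and sharpened for the case $n=0$. The key point is that in the pure continuous setting the cheapest full-information first-order algorithm is cheaper than the $n=0$ specialization of the general mixed-integer method: a cutting-plane method (the center-of-gravity method, or the scheme of Nemirovski--Yudin) solves any instance of $\I_{0,d,R,\rho,M}$ to accuracy $\epsilon$ using only $N := O\!\left(d\log\!\left(\tfrac{MR}{\min\{\rho,1\}\epsilon}\right)\right)$ separation/subgradient/value queries, i.e.\ without the extra $(n+d)=d$ factor present in the mixed-integer bound. So the proof reduces to two steps: (a) show such a method still succeeds when the full-information first-order oracle is replaced by a $\delta$-approximate one (a unit vector within $\ell_2$-distance $\delta$ of a true separating normal, a vector within distance $\delta$ of a true subgradient, and a function value within additive $\epsilon/4$); and (b) show each approximate first-order response can be manufactured from $O\!\left(d\log\!\left(\tfrac{dMR}{\rho\epsilon}\right) + \log\tfrac U\epsilon\right)$ queries of $\mathcal H^{\textup{bit}}$ or $\mathcal H^{\textup{dir}}$. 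Multiplying the two bounds gives the stated complexity.

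For step (a) I would fix a tolerance $\delta = \delta(\epsilon,R,\rho,M,d) > 0$ that is polynomially small in $\epsilon,\rho,1/R,1/M,1/d$, so that $\log(1/\delta) = O\!\left(\log\!\left(\tfrac{dMR}{\rho\epsilon}\right)\right)$, and run the cutting-plane method while trusting only the approximate responses. As in the exact-oracle analysis, the body $K_t$ maintained by the method is kept large enough to contain a fixed $\ell_\infty$-ball $B^\star$ of radius $\Theta(\rho\epsilon/(MR))$ around an optimal solution; such a ball exists and lies in $\widehat C$ with all points $\epsilon$-optimal because of the $\rho$-deep-point and $M$-Lipschitz hypotheses (take a short convex combination of the optimum with the $\rho$-deep point). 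At the centroid $z_t$ of $K_t$ we query separation; given an approximate response $\tilde g$ we cut with the relaxed halfspace $\{z:\langle \tilde g, z - z_t\rangle \le \delta'\}$ with slack $\delta' = \Theta(\delta)$; if $z_t$ is feasible we instead obtain an approximate subgradient $\tilde g$, cut with $\{z:\langle \tilde g, z - z_t\rangle \le \delta'\}$, and query $\widehat f(z_t)$ up to additive $\epsilon/4$ to remember the best feasible centroid seen. The slack $\delta'$ is tuned so that every cut (i) leaves $B^\star$ inside $K_{t+1}$, and (ii) — since $z_t$ is a centroid — still removes a constant fraction of $\vol(K_t)$; hence after $N$ steps $\vol(K_t)<\vol(B^\star)$, which is impossible unless some feasible centroid already satisfied $\widehat f(z_t)\le \OPT+\epsilon/2$, in which case the recorded best centroid is an $\epsilon$-approximate solution.

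For step (b) I would use that a separating-hyperplane response is a unit vector (coordinates in $[-1,1]$), a subgradient response has $\ell_1$-norm at most $M$ (coordinates in $[-M,M]$), and function values lie in $[-U,U]$. With $\mathcal H^{\textup{bit}}$: reading the leading $O(\log(\max\{1,M\}/\delta)) = O(\log(dMR/(\rho\epsilon)))$ bits of each of the $d$ coordinates recovers the vector up to $\ell_2$-error $\delta$, and reading $O(\log(U/\epsilon))$ bits of the value recovers it up to additive $\epsilon/4$. With $\mathcal H^{\textup{dir}}$: Lemma~\ref{lemma:gradapprox} reconstructs an arbitrary vector of a priori bounded magnitude up to error $\delta$ using $O(d\log(dMR/(\rho\epsilon)))$ inner-product-sign queries, while taking $u=1$ in $h^{\val}_{u,c}$ and binary-searching on the threshold $c$ recovers the value up to $\epsilon/4$ in $O(\log(U/\epsilon))$ queries. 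Either way one approximate first-order response costs $O\!\left(d\log\!\left(\tfrac{dMR}{\rho\epsilon}\right) + \log\tfrac U\epsilon\right)$ binary queries; multiplying by $N = O\!\left(d\log\!\left(\tfrac{MR}{\min\{\rho,1\}\epsilon}\right)\right)$ gives the theorem.

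I expect the only real obstacle to be step (a): verifying that relaxed cuts with slack $\delta' = \Theta(\delta)$ can simultaneously protect the optimal ball $B^\star$ and still force constant-factor volume decay at a centroid cut, and pinning down how small $\delta$ (polynomial in $\epsilon,\rho,1/R,1/M,1/d$) must be for this to persist through all $N$ iterations. This is a routine but careful perturbation analysis of the cutting-plane method, and it is exactly the ingredient already established for Theorem~\ref{thm:binary-UB-mixed}; everything downstream of it is bookkeeping identical in spirit to the exact-oracle and mixed-integer cases.
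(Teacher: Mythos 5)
Your high-level decomposition is exactly the paper's: Theorem~\ref{thm:binary-UB-cont} is proved by combining Theorem~\ref{thm:UBOpt} (a centerpoint/centroid cutting-plane method that works with approximate first-order oracles, with Gr\"unbaum's bound supplying the $n=0$ saving of an $(n+d)$ factor) with Lemma~\ref{lemma:bitQuery-and-dirDer} (implementing each approximate oracle call with $O\big(d\log\frac{dMR}{\rho\eps} + \log\frac U\eps\big)$ bit or inner-product-sign queries). Your step (b) is essentially identical to the paper's Lemma~\ref{lemma:bitQuery-and-dirDer}, including the use of Lemma~\ref{lemma:gradapprox} for $\cH^{\textup{dir}}$.

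The real divergence, and a genuine gap, is in the mechanism you propose for step (a). You replace the exact cut through the centroid $\z_t$ with a \emph{relaxed} halfspace $\{\z : \langle \tilde\g, \z - \z_t\rangle \le \delta'\}$ in order to protect the optimal ball $B^\star$, and then assert that because $\z_t$ is a centroid the cut ``still removes a constant fraction of $\vol(K_t)$.'' That last claim does not follow from Gr\"unbaum: Gr\"unbaum controls the volume removed by a hyperplane \emph{through} the centroid, whereas your cut lies at offset $\delta' > 0$ past it, so the removed set is strictly smaller. If $K_t$ has become thin in the direction $\tilde\g$ (width comparable to or smaller than $\delta'$), the relaxed cut may remove almost nothing, and the ``constant fraction'' claim fails. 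You would need a separate ``robust Gr\"unbaum'' lemma bounding the volume of the slab $\{0 < \langle\tilde\g,\z-\z_t\rangle \le \delta'\}$ relative to $\vol(K_t)$, or a case split on the width of $K_t$, neither of which is spelled out. The paper avoids this entirely: it always cuts \emph{exactly} through the centerpoint (so Gr\"unbaum applies verbatim) and instead defines the approximate oracles so that (i) the approximate separation cut through $\z_t$ is valid for all $\rho'$-deep points of $C$ (Definition~\ref{def:approxO}), hence never cuts off $B_\infty(\bar\z,\rho')$, and (ii) the approximate value cut through $\z_t$ has the property that any point it removes has value at least $f(\z_t) - \eps'$. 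The volume argument in Theorem~\ref{thm:UBOpt} then shows that once $\vol(P_T) < \vol(B_\infty(\bar\z,\rho'))$, some point of that ball must have been removed, and since separation cuts cannot do it, some value cut did, which certifies that the corresponding centroid was $\eps/2$-optimal. So your plan is right in outline, but the specific slack-based protection of $B^\star$ is not how Theorem~\ref{thm:binary-UB-mixed}/\ref{thm:UBOpt} is proved, and the volume-decay claim for relaxed cuts is the part that needs an argument you have not supplied.
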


Finally, we provide a kind of transfer result that allows one to transfer algorithms designed for full-information first-order oracles to the (harder) setting of a general binary oracle.

\begin{theorem}\label{thm:binary-UB-finite-transfer} Suppose there exists an algorithm that reports an $\eps$-approximate solution for instances in $\cI_{n, d, \rho, M, R}$ with at most $u$ queries to the full-information first-order oracle based on a first-order chart $\cG$. Then, for any subclass of finitely many instances $\mathcal{I}\subset \mathcal{I}_{n,d, R, \rho, M, R}$,  there exists a query strategy for this subclass using the general binary oracle based on $\mathcal{G}$ that reports an $\epsilon$-approximate solution by making at most 
$$O\big(\log |\I| +u\big)
$$
queries. 
\end{theorem}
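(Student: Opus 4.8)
The plan is to combine a ``binary search over instances'' idea with a faithful simulation of the known full-information algorithm. Fix the finite subclass $\I \subseteq \I_{n,d,R,\rho,M}$ and let $\mathcal A$ be the given algorithm that solves any instance with at most $u$ queries to the full-information first-order oracle $\cO(\cG,\cdot)$ based on the chart $\cG$. The key observation is that, because $\cG$ is fixed, once the unknown instance $(\widehat f,\widehat C)$ is pinned down to a \emph{single} candidate $(f_0,C_0)\in\I$, we can run $\mathcal A$ entirely ``in our head'': at each step $\mathcal A$ asks for $\g^\sepp_\z(\widehat f,\widehat C)$, $\g^\val_\z(\widehat f,\widehat C)$, or $\g^\sub_\z(\widehat f,\widehat C)$ at some point $\z$ that depends only on previous responses, and we can compute these values directly from $(f_0,C_0)$ without consulting the oracle at all. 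Thus, if we knew which instance we were facing, we would need $0$ queries beyond identification, and $\mathcal A$ would output a point that is an $\eps$-approximate solution for $(f_0,C_0)$. So the whole task reduces to \emph{identifying} the instance (or at least narrowing to instances sharing a common $\eps$-solution) using the general binary oracle, which is exactly the regime where arbitrary binary queries on the first-order information are allowed.

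The identification step is a decision-tree / binary-search argument. Maintain a set $S\subseteq\I$ of instances still consistent with all responses received so far; initially $S=\I$. As long as $|S|>1$ and the instances in $S$ do not already share a common $\eps$-approximate solution, I want to make one binary query that roughly halves $|S|$. To do this, pick any point $\z\in[-R,R]^{n+d}$ and consider the values $\{\g^\sepp_\z(f,C): (f,C)\in S\}$ (or the $\val$ / $\sub$ analogues). If some choice of $\z$ and some first-order coordinate gives at least two distinct values across $S$, then since the general binary oracle permits \emph{any} function from $\R^{n+d}$ (resp.\ $\R$) to $\{0,1\}$ as a permissible query, I can choose the binary function $h$ that labels each possible value so as to split $S$ as evenly as possible — concretely, partition the (finitely many) realized values into two groups of size as close to $|S|/2$ as possible and let $h$ be the indicator of one group. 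The response then eliminates at least $\lfloor |S|/2\rfloor$ instances. After $O(\log|\I|)$ such queries we reach a situation where either $|S|=1$, or all instances remaining in $S$ produce identical first-order information at every queried point — but in the latter case the standard information-complexity reasoning applies: since $\mathcal A$ using the full-information oracle cannot distinguish them either (it would see the same transcript), and $\mathcal A$ is correct, they must admit a common $\eps$-approximate solution, so we are done and may output $\mathcal A$'s answer run against any representative of $S$.

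Putting the two phases together: we spend $O(\log|\I|)$ binary queries to reduce to a set $S$ whose instances share a common $\eps$-solution (this uses only that $\cH$ is the general binary oracle and that $|\I|$ is finite), and then $u$ further ``queries'' — which, in fact, we can answer ourselves by simulating $\mathcal A$ on a fixed representative of $S$, since $\cG$ is known, so these cost nothing, but we may charge them to stay within the claimed $O(\log|\I|+u)$ bound if one insists on literally invoking the oracle. Either way the total is $O(\log|\I|+u)$, as required. The main obstacle I anticipate is the bookkeeping in the identification phase: I must argue carefully that ``the instances in $S$ agree on all first-order maps at all points queried so far'' is enough to invoke correctness of $\mathcal A$, i.e.\ that the transcript $\mathcal A$ would generate on the full-information oracle really is determined by that agreement — this follows because $\mathcal A$'s query points are a deterministic function of prior responses, so by induction identical responses force identical query sequences and hence identical final outputs; and then $\mathcal A$'s guarantee forces a common $\eps$-solution. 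A minor subtlety is ensuring each halving query is a \emph{legal} permissible query of the general binary oracle — this is immediate since that oracle by definition allows every binary function on the relevant domain, so no measurability or structural constraints arise.
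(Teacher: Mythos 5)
Your identification phase has a genuine gap: you claim that whenever some query point $\z$ yields at least two distinct first-order values across the surviving set $S$, you can choose a binary function $h$ that partitions the realized values into two groups and thereby ``eliminates at least $\lfloor |S|/2 \rfloor$ instances.'' This is false. If one first-order value $\bar\vv$ is realized by, say, $99$ of $100$ surviving instances and the remaining value by one instance, the best partition of \emph{values} produces groups of sizes $99$ and $1$. If the true instance happens to lie in the large group, the response removes only one instance. Repeating this can take $|\I|$, not $\log|\I|$, queries. Nothing in your argument rules out this unbalanced situation persisting at every candidate query point.

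The paper's proof is structured precisely to handle this unbalanced case, and the way it does so is tightly coupled to the algorithm $\cA$ rather than kept as a separate ``identification'' phase. It always chooses $\z$ according to $\cA$'s current query strategy, and then branches: if no single first-order value is realized by more than half of $\cU$, it can (as you do) construct a binary query that shrinks $\cU$ by a constant factor (Case~1). But if some value $\bar\vv$ dominates, it queries ``is $\g_\z(I) = \bar\vv$?''; a \emph{no} answer halves $\cU$, while a \emph{yes} answer yields the \emph{exact} first-order response that $\cA$ needs at this point, so that step counts toward the budget $u$ instead (Case~2). This interleaving is exactly what guarantees the $O(\log|\I| + u)$ bound. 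Your two-phase decomposition, by decoupling identification from simulation, has no way to ``make progress'' in the unbalanced case, which is why the bound fails. To repair your argument you would essentially have to reintroduce the paper's Case~2 and tie the chosen query point to $\cA$'s strategy, at which point the two proofs coincide.
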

 
 Using the centerpoint-based algorithm from \cite{oertel2014integer,basu2017centerpoints}, we obtain the following corollary:

\begin{corollary}\label{cor:binary-UB-finite} 
    Given any subclass of finitely many instances $\mathcal{I}\subset \mathcal{I}_{n,d, R, \rho, M}$ and any first-order chart $\mathcal{G}$, there exists a query strategy for this subclass using the general binary oracle based on $\mathcal{G}$ that reports an $\epsilon$-approximate solution by making at most 
$$O\left(\log |\I| +2^n\,d\, (n+d) \log\left(\frac{dMR}{\min\{\rho,1\}\epsilon}\right)\right)
$$
queries. In the (pure continuous) case of $n=0$,
$O\left(\log |\I| +  d\log\left(\frac{MR}{\min\{\rho,1\}\epsilon}\right)\right)$ queries suffice.
\end{corollary}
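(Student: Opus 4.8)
The plan is to instantiate Theorem~\ref{thm:binary-UB-finite-transfer} with the best known first-order algorithm for mixed-integer convex optimization, so that the corollary becomes a pure substitution. First I would recall that the centerpoint-based cutting-plane method of Oertel~\cite{oertel2014integer} and Basu--Oertel~\cite{basu2017centerpoints} reports an $\epsilon$-approximate solution for every instance in $\cI_{n,d,R,\rho,M}$ using at most
$$u = O\!\left(2^n d\,(n+d)\log\!\left(\frac{dMR}{\min\{\rho,1\}\epsilon}\right)\right)$$
queries to a full-information first-order oracle (this is exactly the upper bound row of Table~\ref{table:results}). The key point to extract from that method is that it is agnostic to \emph{which} valid separating hyperplane or subgradient the oracle returns at each queried point: its correctness and iteration bound hold for the full-information first-order oracle based on \emph{any} first-order chart $\cG$. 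Hence the hypothesis of Theorem~\ref{thm:binary-UB-finite-transfer} is satisfied with this $u$ and with the given $\cG$.

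Second, I would simply apply Theorem~\ref{thm:binary-UB-finite-transfer}: for any finite subclass $\cI \subseteq \cI_{n,d,R,\rho,M}$ there is a query strategy using the general binary oracle based on $\cG$ that reports an $\epsilon$-approximate solution in $O(\log|\cI| + u)$ queries. Substituting the bound on $u$ immediately yields the claimed
$$O\!\left(\log|\cI| + 2^n d\,(n+d)\log\!\left(\frac{dMR}{\min\{\rho,1\}\epsilon}\right)\right)$$
queries for the mixed-integer case.

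For the pure continuous case $n=0$, the centerpoint method degenerates to a classical cutting-plane scheme (center-of-gravity / Nemirovski--Yudin type), whose query complexity on $\cI_{0,d,R,\rho,M}$ is $u = O\!\left(d\log\!\left(\frac{MR}{\min\{\rho,1\}\epsilon}\right)\right)$ --- the $(n+d)$ multiplicative factor disappears and no extra $\log d$ inside the logarithm is needed since there is no fiber discretization. Feeding this $u$ into Theorem~\ref{thm:binary-UB-finite-transfer} gives the second bound $O\!\left(\log|\cI| + d\log\!\left(\frac{MR}{\min\{\rho,1\}\epsilon}\right)\right)$.

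There is no substantive obstacle here; the corollary is essentially a packaging of Theorem~\ref{thm:binary-UB-finite-transfer}. The only points requiring care are bookkeeping: (i) verifying that the cited mixed-integer algorithm indeed achieves the stated dependence on all of $R,\rho,M,\epsilon$, and in particular accounting for the extra $\log d$ appearing inside the logarithm (which comes from the $\ell_\infty$ discretization of the continuous fibers in the centerpoint recursion), and (ii) confirming, as noted above, that the algorithm's guarantees do not hinge on a particular first-order chart, so that Theorem~\ref{thm:binary-UB-finite-transfer} may be invoked with the arbitrary $\cG$ quantified in the corollary statement. Both follow directly from inspecting the centerpoint method, so the proof is short.
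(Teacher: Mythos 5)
Your proposal is correct and follows exactly the paper's own proof: instantiate Theorem~\ref{thm:binary-UB-finite-transfer} with the centerpoint-based cutting-plane algorithm of Oertel and Basu--Oertel (the exact-oracle version of Algorithm~\ref{alg:centerpoints}), observe that its iteration bound holds for the full-information first-order oracle based on any first-order chart $\cG$, and substitute the known query count $u$ --- with the improved $O\!\left(d\log\!\left(\frac{MR}{\min\{\rho,1\}\epsilon}\right)\right)$ bound in the $n=0$ case coming from Gr\"unbaum's centerpoint constant. This is the same packaging argument as in the paper, with the same bookkeeping caveats you flag.
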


In particular, when the number of instances under consideration is $|\cI| = O(2^{2^nd(n+d)})$, Corollary \ref{cor:binary-UB-finite} gives a strictly better upper bound than Theorem \ref{thm:binary-UB-mixed}. Similarly, for $n=0$, in the case when $|\cI| = O(2^{d})$, we get a better upper bound compared to Theorem \ref{thm:binary-UB-cont}; in fact, we beat the lower bound provided by Theorem \ref{thm:binary-LB}. This demonstrates that even with exponentially many instances under consideration, the case of finite instances yields a lower information complexity than the case of infinitely many instances.
We point out that the first-order chart $\cG$ must be known to implement the query strategy in Theorem \ref{thm:binary-UB-finite-transfer}. In contrast, the algorithms in Theorems~\ref{thm:binary-UB-mixed} and~\ref{thm:binary-UB-cont} are oblivious of the first order chart, i.e., they work with any first order chart.

\subsection{Discussion and future avenues} The concept of information complexity in continuous convex optimization and its study go back several decades, and it is considered a fundamental question in convex optimization. In comparison, much less work on information complexity has been carried out in the presence of integer constrained variables. Nevertheless, we believe there are important and challenging questions that come up in that domain that are worth studying. Further, even within the context of continuous convex optimization, the notion of information complexity has almost exclusively focused on the number of full-information first-order queries. As we hope to illustrate with the results of this paper, considering other kinds of oracles leads to very interesting questions at the intersection of mathematical optimization and information theory. In particular, the study of binary oracles promises to give a more refined understanding of the fundamental question ``How much information about an optimization instance do we need to be able to solve it with provable guarantees?". For instance, establishing {\em any superlinear (in the dimension)} lower bound for the continuous problem with binary oracles, like the one in  Theorem~\ref{thm:binary-LB}, seems to be nontrivial. In fact, the results from~\cite{marsden2022efficient}, on which Theorem~\ref{thm:binary-LB} is based, were considered a breakthrough in establishing superlinear lower bounds on space complexity of convex optimization. Even so, the right bound is conjectured to be quadratic in the dimension (see Theorem~\ref{thm:binary-UB-cont}) and our Theorem~\ref{thm:binary-LB} is far from that at this point. These other oracles also have a practical motivation. Obtaining exact first-order information may be difficult or impossible in many practical situations, and one has to work with approximations of separating hyperplanes and subgradients. The binary oracles can be viewed as providing these approximations and information complexity under these oracles becomes important from a practical standpoint. 

We thus view the results of this paper as expanding our understanding of information complexity of optimization in two different dimensions: what role does the presence of integer variables play and what role does the nature of the oracle play (with or without integer variables)? 
For the role of integer variables, in the pure optimization case Theorem \ref{thm: transfer theorem pure optimization} provides a lifting of lower bound from the continuous case. 
Allowing for constraints, Corollary \ref{cor:standard-LB} brings the lower bound closer to the best known upper bound on information complexity based on the classical subgradient oracle. The remaining gap is now simply a factor linear in the dimension. A conjecture in convex geometry first articulated in~\cite[Conjecture 4.1.20]{oertel2014integer} and elaborated upon in~\cite{basu2017centerpoints,basu2021complexity} would resolve this and would show that the right bound is essentially equal to the lower bound we prove in this paper. 

Beyond the contributions discussed above, our work also opens up new future directions for study. We believe the following additional conjectures to be good catalysts for future research, especially in regard to understanding the interplay of integer variables and other oracles.


The first conjecture is a generalization of our Theorem \ref{thm: transfer theorem pure optimization} to incorporate constraints as well. This would make this ``transfer" tool more powerful, and would, for example, give Corollary~\ref{cor:standard-LB} as a special case without appealing to \cite{basu2021complexity} for the feasibility lower bound. 

\begin{conj}\label{conj:transfer-mixed}
If there exist continuous, \textbf{constrained} convex optimization instances such that $\ell$ is a lower bound for this family on the information complexity with respect to an oracle, then for every $n\geq 1$, there exist mixed-integer instances with $n$ integer variables such that the information complexity of these mixed-integer instances is lower bounded by $\Omega(2^n\cdot\ell)$ for the same oracle.
\end{conj}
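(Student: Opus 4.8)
The plan is to lift the argument behind Theorem~\ref{thm: transfer theorem pure optimization} from the unconstrained to the constrained setting. Fix a continuous constrained family $\I$ in $\R^d$ with first-order chart $\cG_0$ and $\icomp_\eps(\I, \cO(\cG_0, \cH_{0,d})) \ge \ell$, and (retaining the hypothesis of Theorem~\ref{thm: transfer theorem pure optimization}) assume all members of $\I$ have a common optimal value $v^\star$. Index $2^n$ ``slots'' by the integer points $\x \in \{0,1\}^n$. For a hidden slot $\x^\star$ and a choice of one instance $(f_\x, C_\x) \in \I$ per slot, I would build a mixed-integer instance $(F, \mathcal C)$ in $\R^{n+d}$ whose slice over each integer fiber $\{\x\}\times\R^d$ is a faithful copy of $(f_\x, C_\x)$, with the objective on fiber $\x$ shifted upward by $0$ if $\x = \x^\star$ and by $2\eps$ otherwise, and with $\mathcal C$ projecting onto exactly $[0,1]^n$ in the integer coordinates (so that the only feasible integer fibers are the $2^n$ slots). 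Then the global optimum is $v^\star$, attained only on fiber $\x^\star$, and every $\eps$-approximate solution must lie on that fiber and be $\eps$-optimal for $f_{\x^\star}$.

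The objective $F$ would be built exactly as in the proof of Theorem~\ref{thm: transfer theorem pure optimization}: a convex function on $\R^{n+d}$ that coincides with the (shifted) $f_\x$ in a neighborhood of each integer fiber and is steep in the integer directions in between, so that a query at a point whose integer part $\x$ is non-integral either ``touches'' a single fiber --- and then reveals only $\cG_0$-information about that fiber's instance --- or falls in a steep region with an instance-independent response; steepness in the integer directions is permitted because condition~(iii) of Definition~\ref{def:MICO-param} only constrains $F$ along integer fibers, and the $\rho$-deep feasible point survives on fiber $\x^\star$ from $(f_{\x^\star}, C_{\x^\star})$. The new ingredient is the feasible region: I would take $\mathcal C := \conv\big(\bigcup_{\x \in \{0,1\}^n} \{\x\}\times C_\x\big)$, intersected with the box, which has the prescribed integer-fiber slices, and then design the separation part of a chart $\cG_n$ so that at a query point $(\x,\y)$ with $\x$ an integer slot it returns the embedding of $\g^{\sepp}_{\y}(f_\x, C_\x)$, and at all other points an instance-independent coordinate hyperplane. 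Hereditariness of $\cH_{n,d}$ would enter exactly as in Theorem~\ref{thm: transfer theorem pure optimization}, to ensure that every permissible query in $\R^{n+d}$ restricts, on a fiber, to a permissible query of the continuous oracle.

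The lower bound itself would then be the counting/adversary argument of Theorem~\ref{thm: transfer theorem pure optimization}. Given any strategy making $Q < 2^{n-1}\ell$ queries, partition the queries by the unique fiber each one ``touches''; at most $2^{n-1}-1$ fibers can be touched $\ge \ell$ times, so more than $2^{n-1}$ fibers are ``under-resolved'' (fewer than $\ell$ touches). If the strategy's output lies on an under-resolved fiber, the adversary declares that fiber to be $\x^\star$ and, invoking $\icomp_\eps(\I,\cdot)\ge\ell$, fills it with an instance of $\I$ consistent with the responses for which the output is $\eps$-suboptimal; otherwise the adversary takes $\x^\star$ to be any under-resolved fiber distinct from the output fiber, whose shifted value $\ge v^\star + 2\eps$ then rules out the output regardless of where on that fiber it lies. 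Filling the remaining slots consistently yields a valid instance in $\I_{n,d,R,\rho,M}$ on which the strategy fails, so $\icomp_\eps(\I_{n,d,R,\rho,M}, \cO(\cG_n,\cH_{n,d})) \ge 2^{n-1}\ell = \Omega(2^n\ell)$. As a shortcut in the special case where the family admits a penalty reformulation $\tilde f := \max\{f,\, M'\dist_\infty(\cdot, C)\}$, one could instead simulate the constrained oracle by the unconstrained first-order oracle for $\tilde f$ and invoke Theorem~\ref{thm: transfer theorem pure optimization} verbatim; but this route only reaches constrained families with a common optimal value and needs $\cH$ to be stable under the resulting query-type change and rescaling.

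The step I expect to be the real obstacle --- and the reason this remains a conjecture --- is keeping the separation part of $\cG_n$ genuinely ``fiber-local''. When the $C_\x$ differ from slot to slot, which is exactly the interesting case (e.g.\ feasibility problems where all the hardness is in the constraint), $\conv\big(\bigcup_\x \{\x\}\times C_\x\big)$ acquires boundary faces spanned by points of several fibers, and the honest separating hyperplanes near those faces mix integer and continuous coordinates, leaking information about several hidden instances --- and possibly about the identity of $\x^\star$ --- within a single query, which breaks the per-fiber partition. This issue is vacuous in Theorem~\ref{thm: transfer theorem pure optimization} because there $\mathcal C$ is just the box. Circumventing it seems to require restricting to families $\I$ whose feasible regions are mutually compatible (e.g.\ translates of a common body, so $\mathcal C$ becomes a sheared cylinder and the argument goes through cleanly), padding each $C_\x$ so the convex hull is again ``rectangular'' at the relevant scale, or an amortized argument showing that the diagonal-face leakage cannot be exploited without $\Omega(\ell)$ extra queries; dispensing with the common-optimal-value hypothesis, so that the $2\eps$ vertical shift is replaced by genuinely differing optima across fibers, is a further independent difficulty. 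Carrying one of these through in full generality is what Conjecture~\ref{conj:transfer-mixed} leaves open.
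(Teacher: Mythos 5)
The statement you are addressing is Conjecture~\ref{conj:transfer-mixed}, which the paper explicitly leaves open --- there is no ``paper's own proof'' to compare against. Your proposal, to its credit, is self-aware about this: you do not claim a complete proof, and the obstacle you single out is precisely the kind of thing that keeps this a conjecture. Let me just assess the substance.

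Your plan faithfully mirrors the machinery of Theorem~\ref{thm: transfer theorem pure optimization}: one instance $(f_\x, C_\x)$ per fiber, steep interpolation in the integer directions so that a query near a fiber reveals only that fiber's information, and a per-fiber counting/adversary argument. The new ingredient, $\mathcal C := \conv\bigl(\bigcup_\x \{\x\}\times C_\x\bigr)$, is the natural choice and does have the correct integer-fiber slices. The gap you identify is real and, I believe, the central one: when the $C_\x$ differ across slots, the boundary of $\mathcal C$ acquires facets supported by points from several fibers, and any honest separating hyperplane at such a facet has non-trivial integer and continuous components that depend on the $C_\x$ of \emph{multiple} slots simultaneously. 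The adversary in the unconstrained proof can afford to answer interpolation-region queries with an instance-independent response (the all-zero subgradient, because $\psi_{\tF}$ is truncated at $\OPT$ there); there is no analogous instance-independent ``truncation'' for the separation oracle near a genuine cross-fiber facet, so a single separation query can distinguish instances on several fibers at once and the per-fiber accounting breaks. Your proposed workarounds (translates of a common body, rectangular padding, an amortized argument) are reasonable directions, but each either restricts the class of constrained instances to which the conjecture would apply or requires a new argument that is not present in the paper's toolkit.

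Two smaller points. First, you quietly import the ``common optimal value'' hypothesis from Theorem~\ref{thm: transfer theorem pure optimization}, but the conjecture as stated makes no such assumption; you flag this at the end, correctly, as an independent difficulty, and it would need to be either assumed or removed. Second, your $2\eps$ vertical shift on the non-designated fibers is a clean way to rule out off-fiber answers, but you should check that the shifted instances remain in $\I_{n,d,R,\rho,M}$ (the shift is harmless for the Lipschitz and $\rho$-deep conditions, but worth stating). In short: your proposal is a sensible sketch of the intended approach, it is not a proof, and the leak in the separation chart near cross-fiber facets is exactly the missing idea that the paper's conjectural status reflects.
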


Another consequence of resolving this conjecture is that if future research on the information complexity of continuous convex optimization results in better/different lower bounds based on feasibility, these would immediately imply new lower bounds for the mixed-integer case. For instance, we believe the following conjecture to be true for the mixed-integer convex optimization problem.

\begin{conj}\label{conj:cont-binary-LB}
There exists a first-order chart $\mathcal{G}$ such that the general binary oracle based on $\mathcal{G}$ has information complexity $\Omega\left(2^n \Big(1+ d^2\log\left(\frac{MR}{\rho\epsilon}\right)\Big)\right)$.
\end{conj}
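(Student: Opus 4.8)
\textbf{Towards Conjecture~\ref{conj:cont-binary-LB}.} The plan is to decouple the two factors in the bound. The $2^n$ factor should come essentially for free from the transfer theorem: the general binary oracle has hereditary permissible queries (item~4 of Definition~\ref{def:oracle-example}) and its query set trivially contains the function threshold queries needed in Theorem~\ref{thm: transfer theorem pure optimization}, so that theorem reduces the mixed-integer statement to the purely continuous one, namely that there is a first-order chart $\cG$ with
$$\icomp_{\epsilon}\big(\cI_{0,d,R,\rho,M},\, \cO(\cG,\cH)\big) \;=\; \Omega\!\left(d^2\log\frac{MR}{\rho\epsilon}\right)$$
(with $\cH$ the general binary queries), the $\Omega(2^n)$ part --- and hence the ``$1+$'' --- being supplied by the feasibility lower bound already invoked for Corollary~\ref{cor:standard-LB} from~\cite{basu2021complexity}. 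A minor bookkeeping point: Theorem~\ref{thm: transfer theorem pure optimization} requires the hard continuous family to have a common optimal value, so the construction below must be normalized (e.g.\ by an additive shift, which the optimization problem does not see).

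For the continuous bound, I would push the link between binary-query information complexity and \emph{memory-constrained} first-order convex optimization that already underlies Theorem~\ref{thm:binary-LB}: a $T$-query binary strategy can be run while storing only its $T$-bit transcript, hence yields a first-order algorithm with $O(T)$ bits of memory (each binary query at a point $\z$ being answered from one full first-order query at $\z$), so lower bounds on such algorithms push back up to lower bounds on $T$. Two separate strengthenings are then required. First, the dimension exponent must be raised from the sub-quadratic $d^{8/7}$ (which is all the tradeoff of~\cite{marsden2022efficient} currently gives) to $d^2$; this is exactly the conjectured --- but unproven --- $\tilde\Omega(d^2)$ memory barrier for polynomially-many-query convex optimization, so the honest intermediate goal is to prove Conjecture~\ref{conj:cont-binary-LB} \emph{conditionally} on that memory lower bound while chipping away at the exponent unconditionally. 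Second, the factor $\log(MR/(\rho\epsilon))$ must appear \emph{multiplicatively} rather than inside a $\max$ as in Theorem~\ref{thm:binary-LB}. For this I would use a scale recursion: place $\Theta(\log(MR/(\rho\epsilon)))$ essentially independent copies of the hard instance at geometrically shrinking scales on disjoint ``slabs'' of the domain, so that any $\epsilon$-solver must localize within --- and hence spend $\Omega(d^2)$ queries on --- each scale in turn; the Lipschitz constant $M$, the deep-feasibility radius $\rho$, and the box size $R$ have to be budgeted across scales so that the composite instance still lies in $\cI_{0,d,R,\rho,M}$ and still has a single optimum. Because the memory-based hardness needs each single-scale subproblem to be solved to accuracy $1/\mathrm{poly}(d)$ relative to its width, only a $1/\mathrm{polylog}(d)$ fraction of the scales are genuinely independent, so this route most naturally delivers $\tilde\Omega(d^2\log(MR/(\rho\epsilon)))$ --- matching the upper bound of Theorem~\ref{thm:binary-UB-cont} up to logarithmic factors, and suggesting that the exact statement may actually want a $\tilde\Omega$.

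The main obstacle is unambiguous: the quadratic memory lower bound for convex optimization is a notorious open problem, so an unconditional proof of Conjecture~\ref{conj:cont-binary-LB} is at least as hard as resolving it. Even granting that barrier, there is real work in making the reductions tight and composable: one must verify that the transcript-as-advice argument, the scale recursion (a direct-sum-style step, where the disjoint slabs are what force the algorithm not to amortize work across scales), and the $2^{n-1}$-fold integer lifting of Theorem~\ref{thm: transfer theorem pure optimization} all chain together without loss. A reasonable first milestone, independent of the memory conjecture, is to establish just the multiplicative logarithm --- i.e.\ to upgrade Theorem~\ref{thm:binary-LB} to $\tilde\Omega\big(2^n\, d^{8/7}\log(MR/(\rho\epsilon))\big)$ via the scale recursion --- leaving only the dimension exponent in doubt.
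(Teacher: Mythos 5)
The statement you are asked about is a \emph{conjecture} in the paper, not a theorem: the authors explicitly leave it open and remark only that a version of it (for $n=0$) is equivalent to the open $\tilde\Omega(d^2)$ memory lower bound for first-order convex optimization from~\cite{woodworth2019open,marsden2022efficient}. There is therefore no proof in the paper to compare against, and your submission is correctly framed as a plan rather than a proof.

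As a plan, it is sound and aligned with how the authors themselves position the problem. You correctly identify the two reductions the paper already uses for Theorem~\ref{thm:binary-LB} --- (i) the transfer theorem plus the observation that general binary queries are hereditary and include the threshold queries, which handles the $2^n$ factor and the common-optimal-value normalization, and (ii) Proposition~\ref{prop:reduction}, which converts a $T$-query binary strategy into a first-order algorithm with $O(T)$-bit information memory --- and you correctly identify the two genuine gaps: the dimension exponent ($d^{8/7}$ vs.\ $d^2$) and the multiplicative $\log(MR/(\rho\eps))$. Your observation that the natural target is $\tilde\Omega$ rather than $\Omega$ is also reasonable given the upper bound in Theorem~\ref{thm:binary-UB-cont}.

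Two cautionary notes on the parts you sketch. First, your ``scale recursion'' for the multiplicative logarithm is not something the paper attempts, and it has a known difficulty: memory-based lower bounds like~\cite{marsden2022efficient} are proved against adversaries that can reuse the algorithm's small memory across phases, whereas your slab construction needs a direct-sum argument showing the algorithm cannot amortize queries across scales. Direct-sum theorems in query/communication models are notoriously delicate, and the paper gives no indication that this step is routine; you should not present the intermediate milestone $\tilde\Omega(2^n d^{8/7}\log(MR/(\rho\eps)))$ as achievable without working out this composition. Second, the transfer theorem gives a factor $2^{n-1}$ and requires the continuous hard family to share a common optimal value; your additive-shift normalization is fine for the $\cG$-adversary's internal bookkeeping, but you must also check that the shifted instances still satisfy the Lipschitz and $\rho$-deep-point constraints of $\cI_{0,d,R,\rho,M}$, which your slab construction with rescaled $M,R,\rho$ budgets has to enforce explicitly. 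Neither of these is a fatal flaw in a \emph{plan}, but both are places where ``the reductions chain together without loss'' is doing real work that you have not supplied.

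In short: there is no gap to report relative to the paper because the paper proves nothing here; your proposal is an honest and essentially correct assessment of what a proof would require, and it correctly flags that an unconditional proof would resolve a well-known open problem.
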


A version of Conjecture~\ref{conj:cont-binary-LB} is also stated in the language of ``memory-constrained" algorithms in~\cite{woodworth2019open,marsden2022efficient} for the continuous case (see Section~\ref{sec:information-mem} below); the way we have stated the conjecture here presents its transfer to the mixed-integer case.

Analogously, it would be nice to have ``transfer" theorems for upper bounds as well. In the spirit of Theorems~\ref{thm:binary-UB-mixed}, \ref{thm:binary-UB-cont} and~\ref{thm:binary-UB-finite-transfer}, we believe a useful result would be a theorem that
takes upper bound results proved in the full-information first-order oracle setting and obtains upper bound results in the general binary oracle setting. A use case of such a result would be the following: if the upper bound for the general mixed-integer problem with full-information first-order oracles is improved by resolving the convex geometry conjecture mentioned above (and we believe the lower bound is correct and the upper bound is indeed loose), then this would also give better upper bounds for the general binary oracle setting. Thus, we make the following conjecture.

\begin{conj}\label{conj:transfer-binary}
If there exists a query strategy with worst case information complexity \\ $u(n,d,R,\rho,M,\mathcal{G})$ under the full-information first-order oracle based on a first-order chart $\mathcal{G}$, then there exists a query strategy with worst case information complexity bounded by $$u(n,d,R,\rho,M,\mathcal{G})\cdot O\left((n+d)\log\left(\frac{MR}{\rho\epsilon}\right)\right)$$ under the general binary oracle based on $\mathcal{G}$.
\end{conj}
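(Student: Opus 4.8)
The plan is to simulate the given full-information query strategy $D$ one query at a time: each time $D$ issues a first-order query at a point $\z$, we replace it by a short batch of binary queries that recovers a sufficiently accurate approximation $\widetilde\g$ of the true response, feed $\widetilde\g$ to $D$, and continue; at the end we output whatever $D$ outputs. For the batch, note that the response of a first-order query lives in a region of controlled diameter: a nonzero separating-hyperplane response has $\ell_2$-norm $1$; a subgradient response has $\ell_\infty$-norm at most $M$ by the Lipschitz condition in Definition~\ref{def:MICO-param}; and a function-value response is handled as in Theorem~\ref{thm:binary-UB-mixed} by probing bits of the difference $\g_\z^{\val}-\g_{\z_1}^{\val}$ relative to the first query point $\z_1$, which lies in an interval of width at most $2MR$. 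Since the general binary oracle based on $\cG$ contains all bit-extraction queries (in particular $\cH^{\textup{bit}}$), $O\big((n+d)\log(1/\delta)\big)$ binary queries read off enough leading bits of every coordinate to obtain $\widetilde\g$ with $\|\widetilde\g-\g_\z^{\bullet}(\widehat f,\widehat C)\|_\infty\le\delta$, where $(\widehat f,\widehat C)$ is the hidden instance and $\bullet$ the relevant type (alternatively, Lemma~\ref{lemma:gradapprox} produces such a $\widetilde\g$ using $\cH^{\textup{dir}}$ at the same cost). Taking $\delta$ a small fixed polynomial in $\rho\epsilon/(MR)$ gives $O\big((n+d)\log(MR/(\rho\epsilon))\big)$ binary queries per simulated first-order query, hence $u\cdot O\big((n+d)\log(MR/(\rho\epsilon))\big)$ queries in total, matching the claimed bound.

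The crux is to show that $D$ still succeeds when fed the perturbed responses $\widetilde\g_1,\widetilde\g_2,\dots$ rather than the exact ones. The intended argument mirrors the robustness step behind Theorems~\ref{thm:binary-UB-mixed} and~\ref{thm:binary-UB-cont}: a $\delta$-perturbed subgradient of $\widehat f$ at $\z$ is an exact subgradient at $\z$ of a function differing from $\widehat f$ by a linear term of slope $O(\delta)$, and a $\delta$-perturbed separating hyperplane for $\z$ exactly separates $\z$ from a slightly shrunken copy of $\widehat C$. So one aims to show that the transcript $\big(\z_i,\widetilde\g_i\big)_{i\le u}$ produced by the simulation is a genuine full-information transcript of $D$ on some instance $(f',C')\in\I_{n,d,R,\rho',M'}$, with $\rho',M'$ within constant factors of $\rho,M$, whose $\epsilon$-approximate solutions are $O(\epsilon)$-approximate for $(\widehat f,\widehat C)$ (this uses the $\rho$-deep feasibility and Lipschitz structure, much as in the proofs of Theorems~\ref{thm:binary-UB-mixed} and~\ref{thm:binary-UB-cont}). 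Granting this ``realizability-plus-stability'' step, running $D$ with target error $\epsilon/2$ and $\delta$ small enough that the induced slack is at most $\epsilon/2$ gives an output that is $\epsilon$-good for $(f',C')$, hence $O(\epsilon)$-good for $(\widehat f,\widehat C)$; rescaling $\epsilon$ and $\rho$ by constants absorbs the loss.

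I expect the main obstacle to be precisely this last step, in the generality the statement demands: it must hold for an \emph{arbitrary} full-information strategy $D$, not merely for the centerpoint- or ellipsoid-type methods for which the perturbation argument above is routine (and which already suffice for Corollary~\ref{cor:binary-UB-finite}). Two difficulties arise. First, exhibiting a single in-class instance $(f',C')$ that simultaneously realizes an approximately-consistent family of (point, value, subgradient) triples is a convex-interpolation problem; it should be doable using the Lipschitz bound and $\rho$-deep feasibility, but needs care to keep $(f',C')$ inside $\I_{n,d,R,\rho',M'}$. Second, and more seriously, one has to rule out pathological strategies $D$ that branch on the low-order bits of a subgradient or on the unnormalized magnitude of a function value, since no bounded-precision simulation can reproduce such behavior; the natural fix is to first replace $D$ by a canonical ``$\delta$-stable'' strategy of comparable worst-case complexity and simulate that instead, and carrying out this replacement without inflating the query count is, I expect, the heart of the conjecture. (The naive shortcut of discretizing the class $\I_{n,d,R,\rho,M}$ at scale $\delta$ and invoking Theorem~\ref{thm:binary-UB-finite-transfer} does not work: the metric entropy of that class is of order $(MR/\delta)^{\Theta(n+d)}$, far larger than the target bound, so genuine robustification rather than enumeration is unavoidable.)
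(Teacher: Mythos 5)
This statement is presented in the paper as \emph{Conjecture}~\ref{conj:transfer-binary}, i.e.\ an open problem, not a theorem, and the paper offers no proof of it. Your ``proposal'' is therefore not wrong so much as correctly inconclusive: you lay out the natural bit-query/inner-product-query simulation (which mirrors the machinery behind Theorems~\ref{thm:binary-UB-mixed} and~\ref{thm:binary-UB-cont}), and then you explicitly flag the step you cannot close, namely that an \emph{arbitrary} full-information strategy $D$ need not be robust to $\delta$-perturbed responses and may branch on unboundedly fine features of the subgradient or function value, so that the perturbed transcript may not be realizable by any nearby in-class instance.

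Your diagnosis is accurate, and it explains precisely why the authors state this only as a conjecture: the simulation-plus-perturbation argument works for specific ``stable'' algorithms (Oertel's centerpoint method, ellipsoid-type cutting planes), which is exactly how Theorems~\ref{thm:binary-UB-mixed} and~\ref{thm:binary-UB-cont} are obtained, and a separate finite-class trick gives Theorem~\ref{thm:binary-UB-finite-transfer}/Corollary~\ref{cor:binary-UB-finite}; but neither extends to arbitrary $D$. Your observation that discretizing $\I_{n,d,R,\rho,M}$ and invoking Theorem~\ref{thm:binary-UB-finite-transfer} fails for metric-entropy reasons is correct and worth noting, since it rules out the most tempting shortcut. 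The one thing to be clear about: you have not produced a proof, and there is none in the paper to compare against; your write-up is best read as a correct account of the obstruction, i.e.\ that one would need a ``robustification'' lemma replacing $D$ by a $\delta$-stable strategy of comparable worst-case complexity, and this lemma is precisely what is missing.
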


We focus on oracles that use first order information in this paper (Definitions~\ref{def:SBO} and~\ref{def:oracle-example}). Oracles that use ``zero-order information" have also been studied in the literature, beginning with the seminal work of Yudin and Nemirovski~\cite{Nemirovski_Yudin_book}; see~\cite{GroetschelLovaszSchrijver-Book88} for an exposition of how those ideas can be used in the mixed-integer setting and~\cite{conn2009introduction} for an exposition in the nonconvex setting. Such oracles report function values only for the objective, with no subgradient information, and only report membership for the constraints, with no separating hyperplanes. A related oracle is the ``value comparison" oracle that has found many applications. These oracles comprise of questions of the form ``Is $f(\z) \le f(\z')$?'', with no access to the subgradients of $f$. Such algorithms are  particularly useful in learning from users' behaviors, since while a user typically cannot accurately report its (dis)utility value $f(\z)$ for an option $\z$, it can more reliably compare the values $f(\z)$ and $f(\z')$ of two options; see~\cite{comparisonOptNIPS,comparisonOpt} and references therein for discussions and algorithms in the continuous convex case. The mixed-integer setting under the value comparison oracle has been extensively studied in recent work~\cite{chirkov2019complexity,gribanov2019integer,gribanov2020minimization,veselov2020polynomial}. The ideas in this paper can also be adapted to give algorithms for mixed-integer convex optimization using the comparison oracle, but we do not undertake a deeper study here. There seems to be scope for future research in this direction, especially in tying together these different strands of ideas for ``zero order information".

\bigskip
The remainder of this paper is dedicated to the formal proofs of our main results discussed above.


\section{Proof of Theorem \ref{thm: transfer theorem pure optimization}} \label{sec: proof of transfer theorem pure optimization}


The high-level idea for the proof of Theorem 7 is to construct difficult mixed-integer instances by taking hard instances of the continuous case, ``placing" one of them on each fiber $\x \times \R^d, \x \in \{0,1\}^n$, and interpolating between fibers appropriately. We do this in a way such that effectively one needs to solve the continuous problems obtained by restricting to each fiber, which leads the $\Omega(2^n \ell)$ lower bound from an $\ell$ lower bound on the continuous problems -- namely, there will be one difficult function from the continuous case placed on each of the $2^n$ fibers, so if one can't do better than solving each of them separately, one ends up with an $\Omega(2^n \ell)$ lower bound. To make this idea work, the interpolation needs to be done in a way that no query in the full $[0,1]^n\times \R^d$ space reveals information about two (or more) of the continuous functions placed on different fibers, or reveals significantly more information about a function on a fiber than a query on that fiber would. For example, we need to ensure that a single query at the point $(\frac12, \ldots, \frac12, \y)$ for $\y \in R^d$ does not reveal information about multiple functions on different fibers.



\subsection{Game-theoretic perspective} 

    So far we have described the information complexity of optimization using an oracle $\cO$ over a family of instances $\cI$ based on having an optimization algorithm that in each round $t$ makes a query $q_t$ to $\cO$ and receives as answer the result $q_t(\widehat{I})$ for the unknown instance $\hat{I}$ it is trying to optimize. However, for obtaining lower bounds on the information complexity, it is more helpful to consider the algorithm as interacting with an \emph{adversary} for the family of instances under $\cO$, instead of the unknown instance $\hat{I}$. More precisely, at round $t$, the adversary receives the query $q_t$ of the algorithm and produces, possibly based on all the previous queries $q_1,\ldots,q_{t-1}$, a response $r_t$. 
    The only requirement is that there must always exist at least one instance $\bar{I} \in \cI$ that is \emph{consistent} with all of its responses,  namely $r_{t} = q_t(\bar{I})$ for all $t$, under the oracle $\cO$ being considered. 
   With each such response, the set of instances that are consistent with all responses given may change, motivating the following definition: 
    \begin{definition}\label{def:surviving instances}
        Given a class of instances $\cI$, an oracle $\cO$, and a transcript of query-response pairs $(q_1, r_1),..., (q_t, r_t)$, the set of {\em surviving instances} for $(q_1, r_1),..., (q_t, r_t)$ under $\cO$ is
        $$\{ I \in \cI: q_j(I) = r_j \, \forall \, j \in [t]\},$$
        i.e., the set of instances consistent with the responses in the transcript under the oracle $\cO$. When all instances in $\cI$ are unconstrained, let the set of {\em surviving functions} be the set of functions corresponding to the surviving instances. 
    \end{definition}

  We say that an adversary \textbf{Adv} is \emph{$\eps$-hard for $\ell$ rounds}
     if for any algorithm \textbf{Alg}, after $\ell$ rounds there are surviving instances in $\cI$ that do not have a common $\eps$-approximate solution, i.e., if $q_1,\ldots,q_\ell$ and $r_1,\ldots,r_\ell$ are \textbf{Alg}'s queries and \textbf{Adv}'s responses, respectively, then there is a collection of instances $\mathcal{J} \subset \cI$ that have no common $\eps$-approximate solution but such that $r_t = q_t(I)$ for all $I\in \mathcal{J}$ and $t = 1,\ldots,\ell$. 
     Since the sets of $\eps$-approximate solutions of instances in $\cI_{n,d,R,\rho,M}$ are compact convex sets, this collection $\mathcal{J}$ of instances may always be taken to be finite\footnote{If a collection of compact sets has empty intersection, then there exists a finite subcollection that already has empty intersection.}.
     Intuitively, the existence of such an adversary should imply that no algorithm can reliably report an $\eps$-approximate solution within $\ell$ iterations, that is, $\icomp_\eps(\cO, \cI) > \ell$. The next result shows that this adversary-based perspective is indeed equivalent to information complexity, and may be of independent interest (for a proof see Appendix~\ref{sec: information games}).

    \begin{restatable}{lemma}{infoAdv}\label{lemma:infoAdv}
    Consider a class of instances $\I$ and an oracle $\cO$. Then $\icomp_\eps(\I, \cO) > \ell$ if and only if there exists an adversary under $\cO$ using $\cI$ that is $\eps$-hard for $\ell$ rounds.
    \end{restatable}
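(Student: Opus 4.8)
The statement is an ``if and only if'' between $\icomp_\eps(\I,\cO) > \ell$ and the existence of an $\eps$-hard adversary for $\ell$ rounds, so I would prove the two directions separately, with the contrapositive being the cleanest formulation in each case.

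For the direction ``adversary exists $\Rightarrow$ $\icomp_\eps > \ell$'': I would prove the contrapositive. Suppose $\icomp_\eps(\I,\cO) \le \ell$. Then by Definition~\ref{def: icomp} there is a query strategy $D$ such that for every instance $I \in \I$, the set of instances agreeing with $I$ on the first $\le \ell$ queries of $D$ has a common $\eps$-approximate solution. I claim this $D$ defeats every adversary. Fix an adversary \textbf{Adv}; run $D$ against it, generating queries $q_1,\dots,q_\ell$ (each $q_t$ determined by $D$ from the transcript so far, using \textbf{Adv}'s responses $r_1,\dots,r_{t-1}$) and responses $r_1,\dots,r_\ell$. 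By the adversary's defining requirement there is a consistent instance $\bar I \in \I$ with $r_t = q_t(\bar I)$ for all $t \le \ell$. Now observe that the queries $q_1,\dots,q_\ell$ are exactly the first $\ell$ queries of the strategy $D$ \emph{on the instance $\bar I$} — this is the key point, and it follows by induction on $t$ because $q_t$ depends only on the transcript $(q_1,r_1),\dots,(q_{t-1},r_{t-1})$, and $r_j = q_j(\bar I)$ makes this transcript coincide with $\Pi(D,\bar I)$ truncated to $t-1$ steps. Hence the set of surviving instances after round $\ell$ is contained in $\{I \in \I : q_j(I) = q_j(\bar I)\ \forall j \le \ell\}$, which by choice of $D$ has a common $\eps$-approximate solution. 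So no collection of surviving instances can fail to have a common $\eps$-solution, i.e., \textbf{Adv} is not $\eps$-hard for $\ell$ rounds.

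For the direction ``$\icomp_\eps > \ell$ $\Rightarrow$ adversary exists'': again I would argue contrapositively, or equivalently construct the adversary directly. Assume no $\eps$-hard adversary for $\ell$ rounds exists; I want to build a query strategy $D$ witnessing $\icomp_\eps \le \ell$. The natural move is to think of ``the adversary'' as a pruned game tree: a node at depth $t$ is a length-$t$ transcript $(q_1,r_1),\dots,(q_t,r_t)$ that is realizable (has at least one consistent instance), a move of the algorithm picks $q_{t+1}$, and the adversary picks any response $r_{t+1}$ keeping the transcript realizable. Saying ``no $\eps$-hard adversary exists'' means: for the game where the algorithm plays the strategy $D$ we are trying to construct, \emph{some} line of adversary play leads to a transcript whose surviving-instance set lacks a common $\eps$-solution — we must rule this out. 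I would phrase it as: there is a query strategy $D$ such that along \emph{every} realizable transcript of length $\ell$ consistent with $D$, the surviving instances share a common $\eps$-solution; this is exactly $\icomp_\eps(D,I,\cO) \le \ell$ for all $I$ (since $\icomp_\eps(D,I,\cO)$ is the least $k$ with this property along $I$'s transcript, and here it holds at $k \le \ell$). To get such a $D$: define $D$ to be an \emph{arbitrary} fixed query strategy, and argue that if $D$ failed — i.e., some instance $I^\ast$ has $\icomp_\eps(D,I^\ast,\cO) > \ell$ — then the adversary that simply answers according to $I^\ast$ (responding $r_t := q_t(I^\ast)$) is $\eps$-hard for $\ell$ rounds against any algorithm: wait, that is too strong, since that adversary only fools algorithms whose first $\ell$ queries coincide with $D$'s on $I^\ast$. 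So the correct construction of the adversary must \emph{react} to the algorithm. Concretely: given an arbitrary algorithm \textbf{Alg} with query strategy $D'$, the adversary answers by maintaining the set $S_t$ of surviving instances and, at each step, choosing a response $r_t \in \{q_t(I) : I \in S_{t-1}\}$ that keeps $S_t$ ``as hard as possible''; the hypothesis that $\icomp_\eps(\I,\cO) > \ell$, i.e. $\sup_I \icomp_\eps(D',I,\cO) > \ell$ for this particular $D'$, supplies an instance $I^\ast$ whose own transcript under $D'$ has surviving set (after $\ell$ rounds) with no common $\eps$-solution, and having the adversary answer $r_t := q_t(I^\ast)$ against this specific \textbf{Alg} realizes that bad surviving set. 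This is consistent (witnessed by $I^\ast$), so it is a valid adversary, and it is $\eps$-hard for $\ell$ rounds. Since \textbf{Alg} was arbitrary, such an adversary exists.

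The main obstacle, and the only subtle point, is the quantifier structure in the second direction: information complexity fixes one strategy $D$ and takes $\sup_I$, whereas an $\eps$-hard adversary must defeat \emph{all} strategies simultaneously. The resolution is that the adversary is allowed to depend on (react to) the algorithm's queries, so against each algorithm it can ``pretend to be'' the worst-case instance $I^\ast$ for that algorithm's strategy; one must be careful to check that answering $r_t := q_t(I^\ast)$ is a legitimate adversary move (it is, since $I^\ast$ itself is always a consistent/surviving instance) and that the resulting depth-$\ell$ surviving set is exactly the one from $\icomp_\eps(D',I^\ast,\cO) > \ell$, hence lacks a common $\eps$-approximate solution. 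I would also explicitly invoke the compactness footnote to replace the (possibly infinite) surviving set by a finite subcollection $\mathcal J$ with no common $\eps$-solution, matching the wording of the definition of $\eps$-hardness. The rest is bookkeeping with transcripts and the inductive identity $q_t = D(\Pi(D,\bar I)|_{t-1})$.
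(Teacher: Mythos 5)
Your first direction (contrapositive of ``adversary exists $\Rightarrow$ $\icomp > \ell$'') is correct and is in fact more direct than the paper's route, which obtains it only as a byproduct of the full game-theoretic apparatus in Appendix~\ref{sec: information games}. The inductive identification of the queries issued against the adversary with the queries in $\Pi(D,\bar I)$ for the consistent instance $\bar I$ is exactly the right content, and the attainment of $\inf_D$ because $\icomp$-values are integers is a point worth making explicit, as you do.

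The second direction has a genuine gap, and it is a quantifier error. An adversary, per the paper's definition, is a single response strategy whose move at round $t$ depends only on the queries $q_1,\ldots,q_t$ (hence on its own past responses), never on the identity of the algorithm producing them. Your construction picks, \emph{for each algorithm} $D'$, a witness $I^\ast_{D'}$ with $\icomp_\eps(D',I^\ast_{D'},\cO)>\ell$ and has the adversary respond $q_t(I^\ast_{D'})$. But two algorithms $D'$ and $D''$ with the same opening query $q_1=D'(\emptyset)=D''(\emptyset)$ may have witnesses with $q_1(I^\ast_{D'})\neq q_1(I^\ast_{D''})$, so ``pretend to be $I^\ast$'' is not a well-defined adversary. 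The check you flag (that $I^\ast$ survives) is not the issue; well-definedness is. Your closing sentence ``Since \textbf{Alg} was arbitrary, such an adversary exists'' concludes $\exists A\, \forall D$ from $\forall D\, \exists A$, which is exactly what cannot be taken for granted. This is the difficulty that forces the paper into the game-theoretic route: it builds \emph{one} adversary $A^\star$ that at each node plays the response maximizing the value of the subgame rooted there, shows via finite-horizon truncations and integer-valuedness that the required suprema/infima are attained (Lemmas~\ref{lem:subgame-finite} and~\ref{lemma:equi}), and then derives the minimax equality $\sup_A\inf_Q v(Q,A)=\icomp_\eps(\I,\cO)$ (Lemmas~\ref{lemma:minimax},~\ref{lemma:equivIcomp}), which, with Observation~\ref{obs:hardAdv}, gives this direction. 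Your phrase ``as hard as possible'' gestures at this saddle-point adversary, but the argument then reverts to the algorithm-indexed $I^\ast$ construction and leaves the quantifier swap unjustified.
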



\subsection{Proof for the full-information first-order oracle}

The full proof of Theorem \ref{thm: transfer theorem pure optimization} is a bit technical and requires a few conceptual connections. For a better exposition, we first prove the theorem in the case that the oracle is the full-information first-order oracle. As $\cH$ thus consists of the identity maps, throughout this subsection we will write oracles using first-order information as $\cO(\cG)$, where $\cG$ is the corresponding first order chart.

Given the assumption of the theorem and the equivalent adversarial perspective from Lemma~\ref{lemma:infoAdv}, assume there is a family of continuous, unconstrained instances $\cI_{cont} \subseteq \cI_{0,d,R,\rho,M}$, all with the same optimal value $\OPT$, and a full-information first-order adversary \textbf{Adv-Cont} for $\cI$ that is $\eps$-hard for $\ell-1$ rounds.
Let us use $\F_{cont}$ to denote the objective functions of the instances $\cI_{cont}$.
In the full-information first-order case, queries of an optimization algorithm consist of points $\y_1,\y_2,\ldots \in \R^d$, and either query the function value or the subgradient. For simplicity, let us allow the algorithm to query \textit{both} the function value and subgradient in a single query, so that the queries become simply $\y_1,\y_2,\ldots \in \R^d$ and the responses of an adversary consist of a sequence of consistent function values and subgradients, namely a sequence $(v_1,\g_1), (v_2,\g_2),\ldots \in \R \times \R^d$ such that there is some $f \in \F_{cont}$ satisfying $v_t = f(\y_t)$ and $\g_t \in \partial f(\y_t)$ for all rounds $t$.

To prove the theorem, we will construct a full-information first-order adversary \textbf{Adv-MI} for a family of mixed-integer instances over $\{0,1\}^n \times \R^d$ that is $\eps$-hard for $2^n \ell - 1$ rounds. As alluded to before, the very high-level is to place a copy of the continuous adversary \textbf{Adv-Cont} on each of the continuous fibers $\x \times \R^d$ for $\x \in \{0,1\}^n$. In fact, we will work with a slightly modified version of the continuous adversary that is constructed next. 


\subsubsection{Modifying the continuous adversary \textbf{Adv-Cont}}


For the mixed-integer adversary \textbf{Adv-MI}, it will be important to render a fiber ``useless" for the optimization algorithm after it queries (close to) this fiber too many times, so as to intuitively force it to query (close to) other fibers, or gain no new information otherwise. This will be done by modifying the continuous adversary \textbf{Adv-Cont} such that whenever it is probed $\ell$ or more times, it commits to answering all future queries consistently with a \textit{single} function that has optimal value $> \OPT + \eps$; since our mixed-integer instances will be constructed to have optimal value $\OPT$, gathering more information about the function on such fibers will not help the algorithm solve the mixed-integer problem. To do this, the modified continuous adversary will also keep track of the set $S$ of surviving functions (Definition~\ref{def:surviving instances}) given its responses. More precisely, here are its main properties. 

\begin{lemma} \label{lemma:advContPlus}
    There is a family of convex functions $\xoverline{\mathcal{F}}_{cont}$ corresponding to instances $\cI_{0,d,R,\rho,M}$ of the purely continuous case, a first-order chart $\cG$ and a full-information first-order adversary \textbf{Adv-Cont+} that, for any algorithm \textbf{Alg}, maintains a set of functions $S_t \subseteq \xoverline{\mathcal{F}}_{cont}$ for every query-response round $t$  with the following properties:

    \begin{enumerate}
        \item In every round $t\geq 1$, all functions in $S_t$ are consistent with the responses returned by \textbf{Adv-Cont+} thus far, under some oracle using first-order information $\cO(\cG)$.

        \item \vspace{-6pt} In the first $t\leq \ell-1$ rounds, there is a finite collection of functions in $S_t \cap \mathcal{F}_{cont}$ that do not share an $\eps$-approximate solution. In particular, \textbf{Adv-Cont+} is still $\eps$-hard for $\ell-1$ rounds. 
            
        \item \vspace{-6pt} For all rounds $t\ge 1$, $S_t$ is closed under taking maxima of finitely many of its elements, and also contains a function that has minimum value $> \OPT + \eps$.

        \item For rounds $t\geq \ell$, $S_t$ contains a single function with minimum value $> \OPT+\eps$.

    \end{enumerate}
\end{lemma}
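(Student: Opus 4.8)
The plan is to construct \textbf{Adv-Cont+} by running the original adversary \textbf{Adv-Cont} while simultaneously tracking a cleverly enlarged set of surviving functions, and then ``freezing'' the adversary into a single bad function once it has been probed $\ell$ times. First I would fix the family $\xoverline{\mathcal{F}}_{cont}$ to be the closure of $\mathcal{F}_{cont}$ under two operations: taking pointwise maxima of finitely many members, and adding, for a suitable constant $\delta > \eps$, the constant function $\OPT + \delta$ (and, more generally, pointwise maxima of members of $\mathcal{F}_{cont}$ with this constant). One must check that all these functions still correspond to instances in $\cI_{0,d,R,\rho,M}$: convexity and the $M$-Lipschitz bound are preserved under finite maxima and under adding a constant, the containment in the box $[-R,R]^d$ is a statement about $C$ (unchanged, since all instances are unconstrained), and the $\rho$-deep feasibility condition is about $C$ as well, so it is automatically inherited. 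The only mild point is that taking a maximum with the constant $\OPT+\delta$ can change the optimal value and optimal fiber, but since these are unconstrained continuous instances the strict-feasibility requirement (ii) holds trivially for any point, so membership in $\cI_{0,d,R,\rho,M}$ is maintained. This yields item 3: $S_t$ will by construction always be closed under finite maxima, and will always contain the constant function $\OPT + \delta$, which has minimum value $\OPT + \delta > \OPT + \eps$.

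Next I would define the behavior of \textbf{Adv-Cont+}. For the first $\ell - 1$ rounds it simply mimics \textbf{Adv-Cont}, answering with the same function values and subgradients (using the same first-order chart $\cG$, suitably extended so that it is defined on all of $\xoverline{\mathcal{F}}_{cont}$, e.g.\ by picking an arbitrary valid subgradient for the new functions). The surviving set $S_t$ is defined as all functions in $\xoverline{\mathcal{F}}_{cont}$ consistent with the transcript so far; since the responses are exactly those of \textbf{Adv-Cont}, the original surviving set $S_t \cap \mathcal{F}_{cont}$ is precisely the set of surviving functions for \textbf{Adv-Cont}, which by hypothesis (it is $\eps$-hard for $\ell-1$ rounds, via Lemma~\ref{lemma:infoAdv}) contains a finite subcollection with no common $\eps$-approximate solution — this is item 2. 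At round $\ell$, the adversary inspects $S_{\ell-1} \cap \mathcal{F}_{cont}$, which is nonempty (it contains functions with no common $\eps$-solution, hence at least two functions), picks any $f^\star$ in it, forms $g^\star := \max\{f^\star,\ \OPT + \delta\}$, and commits: from round $\ell$ onward it answers all queries consistently with $g^\star$ (value and a subgradient from $\partial g^\star$ as dictated by the chart). We then set $S_t := \{g^\star\}$ for all $t \ge \ell$. Since $g^\star \ge \OPT + \delta$ everywhere, its minimum value is $> \OPT + \eps$, giving item 4, and $\{g^\star\}$ is trivially closed under finite maxima and contains a function (itself) with minimum $> \OPT+\eps$, consistent with item 3.

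The remaining obligation is item 1 — consistency — and also checking that the freeze is legitimate, i.e.\ that $g^\star$ really is consistent with \emph{all} responses, including those from rounds $1,\dots,\ell-1$. This is the one genuinely delicate point, and it is where I expect the main obstacle to be. One needs: (a) $g^\star \in \xoverline{\mathcal{F}}_{cont}$, which holds by the closure under maxima-with-constant built into the definition of $\xoverline{\mathcal{F}}_{cont}$; and (b) $g^\star$ agrees with the first $\ell-1$ transcript entries. For (b) the key observation is that $f^\star \in S_{\ell-1}$ means $f^\star$ matches all of $\textbf{Adv-Cont}$'s value/subgradient answers at the queried points $\y_1,\dots,\y_{\ell-1}$; we need the same for $g^\star = \max\{f^\star, \OPT+\delta\}$. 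This forces a design constraint on the original adversary's responses: at every queried point the reported value must be $\ge \OPT + \delta$ (so that the max is attained by $f^\star$ there, making both the value and the subgradient of $g^\star$ coincide with those of $f^\star$). I would handle this by first modifying \textbf{Adv-Cont} (before everything above) so that it only ever reports function values $\ge \OPT + \delta$ at queried points — this is possible because the standard hard instances for continuous convex optimization (e.g.\ the Nemirovski–Yudin construction, or whatever $\cI_{cont}$ is given) can be translated/truncated so that near the region the algorithm queries the functions are flat at a high value and only dip toward $\OPT$ in a tiny neighborhood of the hidden optimum, which the algorithm has not yet localized within $\ell-1$ rounds; alternatively one replaces each instance $f$ by $\max\{f, \OPT+\delta\}$ throughout and shows this does not destroy $\eps$-hardness as long as $\delta$ is chosen with $\eps < \delta$ small enough that the $\eps$-approximate-solution sets are unaffected away from the optimum. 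Assembling this carefully, the freeze is valid, $S_t$ is always nonempty and consistent (item 1), and items 2–4 follow as described. I would close by remarking that the extended chart $\cG$ is well-defined on $\xoverline{\mathcal{F}}_{cont}$: for each point $\z$ and each function we must pick a subgradient, and for max-functions we use the subgradient of an active piece, which is consistent with the value response, so $\cO(\cG)$ as required in item 1 exists.
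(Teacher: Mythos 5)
Your overall architecture — run \textbf{Adv-Cont} for $\ell-1$ rounds, then freeze to a single function with high minimum value — matches the paper's. But the mechanism you use to produce the frozen function is wrong, and this is not a cosmetic issue.

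You take a single surviving $f^\star \in \mathcal{F}_{cont}$ and form $g^\star = \max\{f^\star, \OPT+\delta\}$. As you yourself flag, $g^\star$ need not be consistent with the transcript: if $f^\star(\y_t) < \OPT+\delta$ at some queried $\y_t$ then $g^\star(\y_t) \ne v_t$. Your patch — rearrange or clip the continuous hard instances so that reported values are always $\ge \OPT+\delta$ — breaks the black-box nature of the lemma (it must work for an arbitrary $\eps$-hard adversary over an arbitrary family $\mathcal{F}_{cont}$, not one you get to redesign), and the specific variant ``replace each $f$ by $\max\{f,\OPT+\delta\}$ and argue $\eps$-hardness is preserved'' cannot work as stated: with $\delta > \eps$, every such clipped function has minimum exactly $\OPT+\delta > \OPT+\eps$, so the original optimal value is destroyed, the hypothesis that all instances in $\I$ share the optimal value $\OPT$ is violated, and the $\eps$-solution sets change completely. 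The same defect infects your proof of Item 3: the constant function $\OPT+\delta$ you want to keep in $S_t$ only survives if the adversary's reported value at every queried point is exactly $\OPT+\delta$ with subgradient $\0$, which is false.

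The missing idea is that you should take the pointwise maximum of the \emph{finite collection} $\{f_1,\dots,f_k\} \subset S_{\ell-1}\cap\mathcal{F}_{cont}$ with no common $\eps$-approximate solution, not a max with a constant. Two facts then do all the work simultaneously. First, consistency is automatic: since $f_1,\dots,f_k$ all survive, they all agree with the transcript, in particular $f_1(\y_t)=\cdots=f_k(\y_t)=v_t$ at each queried $\y_t$, so $\max\{f_1,\dots,f_k\}(\y_t)=v_t$ as well, and because every $f_j$ is active there, $\partial f_j(\y_t)\subseteq\partial\max\{f_1,\dots,f_k\}(\y_t)$, so the reported subgradient $\g_t$ is also valid for the max. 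This gives closure of $S_t$ under finite maxima for free and lets the freeze be legitimate. Second, the paper's Lemma~\ref{lemma:maxOPT} shows that the max of a finite collection with no common $\eps$-solution has minimum value strictly greater than $\OPT+\eps$, which yields Items 3 and 4 without introducing any constant function into $\xoverline{\mathcal{F}}_{cont}$. Your proposal, as written, does not supply a replacement for either of these facts, so Items 1, 3, and 4 are not established.
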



\medskip
Item 1.~means that for rounds $t < \ell$, there exists a full-information first-order oracle $\cO(\cG)$ such that $S_t$ is exactly the set of surviving functions under $\cO(\cG)$ given the responses produced by \textbf{Adv-Cont+} up to round $t$. 
Hence, we will refer to this $S_t$ as the set of \textit{surviving functions maintained by \textbf{Adv-Cont+} at round $t$}.
We now make precise our modification to the continuous adversary \textbf{Adv-Cont} and prove Lemma~\ref{lemma:advContPlus}. As a preliminary, let $\xoverline{\mathcal{F}}_{cont}$ denote the closure of $\mathcal{F}_{cont}$ under taking maxima of finitely many functions, i.e. for any finite collection $\mathcal{J} \subset \mathcal{F}_{cont}$, $\max_{f\in \mathcal{J}}\{f\} \in \xoverline{\mathcal{F}}_{cont}$. Notice these functions are still convex. The following lemma highlights the key property of $\mathcal{F}_{cont}$ we will make use:

\begin{lemma} \label{lemma:maxOPT}
    Let $\mathcal{J}\subset \mathcal{F}$ be a finite set. If $f\in \mathcal{J}$ do not have a common $\eps$-solution, then the pointwise maximium function $\max_{f\in \mathcal{J}}\{f\}$ has minimum value greater than $OPT + \eps$.   
\end{lemma}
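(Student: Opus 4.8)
Let me think about what this lemma says. We have a finite set $\mathcal{J} \subset \mathcal{F}$ (actually $\mathcal{F}_{cont}$) of convex functions, all with the same optimal (minimum) value $\OPT$ (this is the standing assumption: all instances in $\mathcal{I}_{cont}$ have the same optimal value). The claim: if the functions in $\mathcal{J}$ do not share a common $\epsilon$-approximate solution, then $g := \max_{f \in \mathcal{J}} f$ has $\min g > \OPT + \epsilon$.

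The plan is a direct argument by contradiction. Suppose $\min g \le \OPT + \epsilon$. Since $\mathcal{J}$ is finite and each $f$ is convex (continuous, and defined on a compact box $[-R,R]^d$, so the minimum of $g$ is attained), pick $\y^\star \in \arg\min g$. Then for every $f \in \mathcal{J}$ we have $f(\y^\star) \le g(\y^\star) = \min g \le \OPT + \epsilon$. But $\OPT$ is the common minimum value of every $f \in \mathcal{J}$, so $f(\y^\star) \ge \OPT$ and hence $f(\y^\star) \le \OPT + \epsilon = \min_{\y} f(\y) + \epsilon$. That is exactly the statement that $\y^\star$ is an $\epsilon$-approximate solution for $f$ — simultaneously for every $f \in \mathcal{J}$. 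This contradicts the hypothesis that the functions in $\mathcal{J}$ have no common $\epsilon$-approximate solution. Hence $\min g > \OPT + \epsilon$.

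First I would make sure the minimum of $g$ is actually attained and finite — this follows since each $f \in \mathcal{F}_{cont}$ corresponds to an instance in $\mathcal{I}_{0,d,R,\rho,M}$, so $f$ is real-valued and Lipschitz (hence continuous) on the compact box $[-R,R]^d$ to which the feasible region is confined, and $g$ is a finite max of such functions, thus continuous on a compact set. Then the core chain of inequalities $\OPT \le f(\y^\star) \le g(\y^\star) = \min g \le \OPT + \epsilon$ does all the work, using in an essential way that $\min f = \OPT$ for every $f \in \mathcal{J}$ (the common-optimal-value assumption). I do not anticipate a genuine obstacle here; the only thing to be careful about is the bookkeeping that "$\epsilon$-approximate solution for the unconstrained continuous instance" means precisely $f(\y^\star) \le \min f + \epsilon$, and that a common minimizer-up-to-$\epsilon$ of all members of $\mathcal{J}$ is what "common $\epsilon$-solution" refers to — after that the contrapositive is immediate. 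The statement is essentially a one-line observation; the content is just recording it for use in proving Lemma~\ref{lemma:advContPlus}, where it guarantees that once the adversary collapses to a pointwise maximum of a non-$\epsilon$-compatible family, that single function has value bounded away from $\OPT$ by more than $\epsilon$.
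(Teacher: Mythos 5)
Your proof is correct and is essentially the same contradiction argument as the paper's: assume some point achieves value at most $\OPT + \epsilon$ for the pointwise maximum, conclude that point is an $\epsilon$-approximate solution for every $f \in \mathcal{J}$ simultaneously (using the common-optimal-value assumption), and derive a contradiction. The only cosmetic difference is that the paper works directly with an arbitrary point $\z$ witnessing $\max_{f\in \mathcal{J}} f(\z) \le \OPT + \epsilon$ rather than selecting an actual minimizer, so it does not need the attainment remark, but the substance is identical.
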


\begin{proof}
Suppose for sake of contradiction that there exists a point $\z$ such that $\max_{f\in \mathcal{J}}\{f(\z)\} \leq OPT + \eps$. Then $f(\z) \leq OPT + \eps$ for all $f\in \mathcal{J}$, which means $\z$ is an $\eps$-solution for all $f\in \mathcal{J}$, which contradicts the assumption that they do not share an $\eps$-solution.
\end{proof}

We now formally describe \textbf{Adv-Cont+}, and then prove that it satisfies the invariants of Lemma~\ref{lemma:advContPlus}. 

    \vspace{4pt}
    \begin{mdframed}
    \vspace{-7pt}
    \begin{proc} \label{proc:advCont}
     \normalfont
     \textbf{Adv-Cont+} 

    \vspace{4pt}
    \noindent Initialize set of surviving functions $S_0 = \xoverline{\mathcal{F}}_{cont}$

    \vspace{4pt}
    \noindent For each round $t = 1,2\ldots$\,:

    \vspace{-8pt}
    \begin{enumerate}[leftmargin=18pt]
        \item Receive query point $\y_t \in \R^d$ from the optimization algorithm
        
        \item  \vspace{-3pt}  If $t \le \ell - 1$: Send $\y_t$ to the adversary \textbf{Adv-Cont}, receiving back a value $v_t$ and subgradient $\g_t$. Obtain $S_{t}$ by removing from $S_{t-1}$ the functions $f$ that are not consistent with this response for any first-order chart $\cG$, namely where $f(\y_t) \neq v_t$ or $\g_t \notin \partial f(\y_t)$.
        
        Send the response $(v_t,\g_t)$ to the optimization algorithm. 
       
        \item \vspace{-3pt} If $t = \ell$: Since \textbf{Adv-Cont} if $\eps$-hard for $\ell-1$ rounds, there is a finite collection of functions $\{f_1, ..., f_k\} \subset S_{t-1} \cap \mathcal{F}_{cont}$ that do not share an $\eps$-solution. Define their pointwise maxima $f_{\max} = \max\{f_1, ..., f_k\}$ and set $S_{t+k} = \{f_{\max}\}$, for all $k = 0, 1, 2...$. 
        
        Set the value $v_t$ to be $f_{\max}(\y_t)$ and set $\g_t$ to be a subgradient in $\partial f_{\max}(\y_t)$ (consistent with what the first order chart $\cG_0$ gives for $f_1, \ldots, f_k$ at $\y_t$, if $\y_t$ has been queried in an earlier round), and send the response $(v_t,\g_t)$ to the optimization algorithm. 
          
        \item \vspace{-3pt} If $t > \ell$: Let $f_{\max}$ be the only function in $S_{t-1}$. If $\y_t$ was queried in an earlier round $k$, answer $(v_k, \g_k)$. Otherwise, set the value $v_t$ to be $f_{\max}(\y_t)$ and set $\g_t$ to be any subgradient in $\partial f_{\max}(\y_t)$, and send the response $(v_t,\g_t)$ to the optimization algorithm. 
    \end{enumerate}
\end{proc}
\end{mdframed}
\smallskip
   
    \begin{proof}[Proof of Lemma \ref{lemma:advContPlus}]
     We will proceed by induction on the number of rounds $t$. The lemma clearly holds for $S_0$, so suppose it holds for $S_{t-1}$. 
     
    If $t \le \ell - 1$, then $S_t$ satisfies Item 1 due to to the update rule in the procedure for obtaining $S_t$, since all functions that are not consistent with the given response are removed. More precisely, since the responses given are those produced by \textbf{Adv-Cont}, these functions in $S_t$ are consistent with the responses under exactly the oracle $\cO(\cG_0)$ that \textbf{Adv-Cont} is hard under. 
    $S_t$ also satisfies Item~2 because \textbf{Adv-Cont} is assumed to be $\eps$-hard for $\ell-1$ rounds, so there exists a finite collection of functions $\{f_1, ..., f_k\} \subset \F_{cont}$ with no common $\eps$-solution that are consistent with all responses given to \textbf{Adv-Cont+} by \textbf{Adv-Cont}; thus $S_t$ contains them. 
    For Item 3, to show the closure under taking maxima, we need to argue that if functions $f_1, ..., f_k$ were not removed from $S$, then neither was $\max(f_1, ..., f_k)$. 
    Since $f_1, ..., f_k$ are convex, then $\partial f_j (\y) \subset \partial \max\{f_1, ..., f_k\}(\y)$ for any $j$ such that $f_j(\y) = \max\{f_1(\y), ..., f_k(\y)\}$.
    Hence, if $f_1, ..., f_k$ all have function value $v_t$ and subgradient $\g_t$ at $\y$, then so does $\max\{f_1, ..., f_k\}$, so $\max\{f_1, ..., f_k\}$ was not removed from $S$, as desired. Furthermore, if $f_1, ..., f_k$ are taken to be the functions guaranteed by Item 2, Lemma \ref{lemma:maxOPT} implies that $\max\{f_1, ..., f_k\}$ has optimal value greater than $\OPT+\eps$, so since we just showed $\max\{f_1, ..., f_k\} \in S_t$, the remainder of item 3 follows. 

    If $t = \ell$, $S_t$ contains the single function $f_{\max}$, which has optimal value greater than $\OPT+\eps$ by its construction as a consequence of Lemma \ref{lemma:maxOPT}. Hence, Item 3 follows. 
    To prove item 1, we will use that $S_{t-1}$ satisfies Item 1, and by Item 3 applied to $S_{t-1}$, $f_{\max}$ is consistent with the responses returned by the procedure up to round $t-1$. For round $t$ itself, consistency follows from the definition of $v_t$ and $\g_t$, and so $f_{\max}$ is consistent with all responses given. Item 2 does not apply in this case and Item 4 is immediate by the construction of $S_t := \{f_{\max}\}$.

    If $t > \ell$, then $S^{t} = S^{t-1} = \{f_{\max}\}$ and it suffices to check that the response $(v_t,\g_t)$ is compatible with $f_{\max}$, which follows immediately from the definition of the response.

   Hence, Items 1-4 of the lemma follow. It remains to show that \textbf{Adv-Cont+} is indeed a well-defined adversary under a full-information first-order oracle. For rounds $t\leq \ell-1$, this is inherited from \textbf{Adv-Cont}, while for $t \geq \ell$, this is ensured because if the queried point $\y_t$ is the same as  $\y_{t'}$ for some round $t'< t$, \textbf{Adv-Cont+} provides the same response in round $t$ as in round $t'$. Thus, there is indeed a first-order chart $\cG$ (derived from the first order map $\cG_0$ for \textbf{Adv-Cont}) such that \textbf{Adv-Cont+} is an adversary under the corresponding full-information first-order oracle $\cO(\cG)$.
%
%
%
%
    \end{proof}


\subsubsection{Constructing the mixed-integer adversary \textbf{Adv-MI}} \label{sec:MIAdv}
    We now construct the family $\F_{MI}$ of functions over $\R^n \times \R^d$ used to transfer the lower bound to the mixed-integer setting, along with the adversary \textbf{Adv-MI} for that family. We call functions over $\R^n \times \R^d$ \textit{full-dimensional} to distinguish them from the functions over $\R^d$, the continuous part of the problem. 
    As indicated previously, these full-dimensional functions $\psi$ in $\F_{MI}$ will be obtained by considering combinations of selecting one function $f_{\bar{\x}}$ from $\xoverline{\F}_{cont}$ for each of the mixed-integer fibers $\bar{\x} \times \R^d, \bar{\x} \in \{0,1\}^n$, letting $\psi$ equal the appropriate function selected over each corresponding fiber, 
    and applying an interpolation scheme between the fibers. This interpolation is illustrated in Figure \ref{fig:hardFuncMI} and described in detail later in this section. 
    
    For the behavior of \textbf{Adv-MI}, we instantiate a copy of the modified continuous adversary \textbf{Adv-Cont+} on each fiber. 
    Whenever the optimization algorithm queries a point $(\bar{\x}, \y)$ on a fiber, we send $\y$ to the continuous adversary on the fiber and report back the response $(v,\g)$ received, although $\g$ needs to be appropriately lifted to the full $\R^n \times \R^d$ space to be consistent with the way we interpolate the functions between the fibers. 
    If the optimization algorithm only probes on these fibers, then it is intuitive that such an adversary would be $\eps$-hard for $2^n \ell - 1$ rounds:
    informally, up to this round, at least one of the $2^n$ fibers that has received no more than $\ell - 1$ queries, so using the hardness of \textbf{Adv-Cont+} (Item 2 of Lemma \ref{lemma:advContPlus}) we can obtain full-dimensional functions that do not share an $\eps$-approximate solution, which confirm the desired $\eps$-hardness of the mixed integer adversary \textbf{Adv-MI}.

    The crucial element is how to deal with queries on points outside of the mixed-integer fibers. If such queries provide the algorithm with more information about the full-dimensional functions $\F_{MI}$ than queries on the fibers do, then we may not have full-dimensional functions with no common $\eps$-approximate solution surviving for $2^n \ell - 1$ rounds. To handle this issue, the interpolation used to define the full-dimensional functions $\psi$ guarantees that its behavior on a fractional point $(\tilde{\x}, \y) \notin \{0,1\}^n \times \R^d$ is completely determined by the value of the function $f_{\bar{\x}}(\y)$ from $\xoverline{\F}_{cont}$ selected for the fiber $\bar{\x}\times \R^d$, where $\bar{\x} \in \{0,1\}^n$ is the closest 0/1 point to $\tilde{\x}$. Thus, \textbf{Adv-MI} can also answer such a query at a fractional point by making a query to the appropriate continuous adversary \textbf{Adv-Cont+} on $\left\{\bar{\x}\right\}\times \R^d$, and the hardness of the latter can still be leveraged. 

    We now formally define the functions $\F_{MI}$ and the adversary \textbf{Adv-MI}.

\begin{figure}[h]
\centering
\includegraphics[height=6cm]{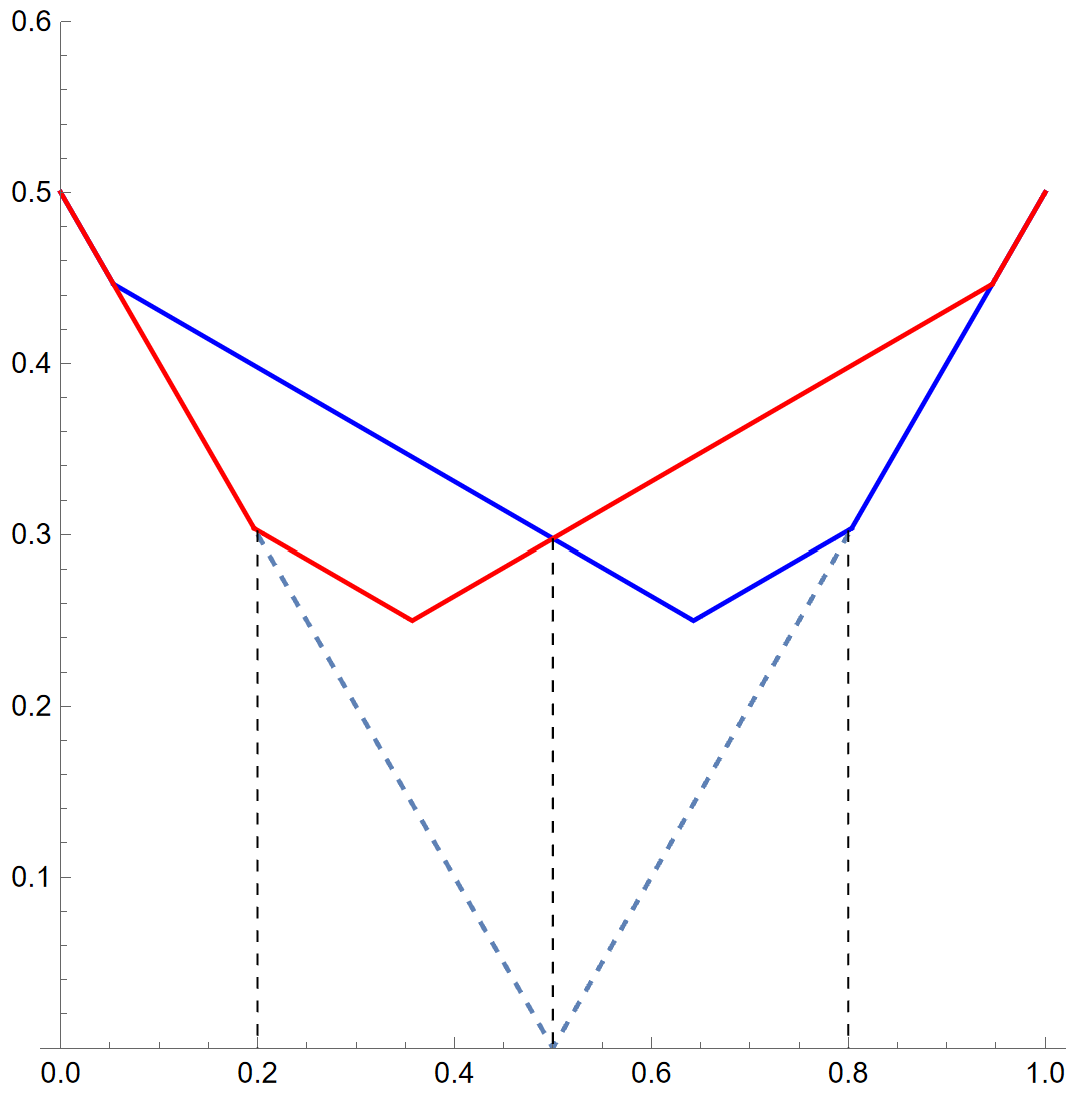}
\hspace{1cm}
\includegraphics[height=6cm]{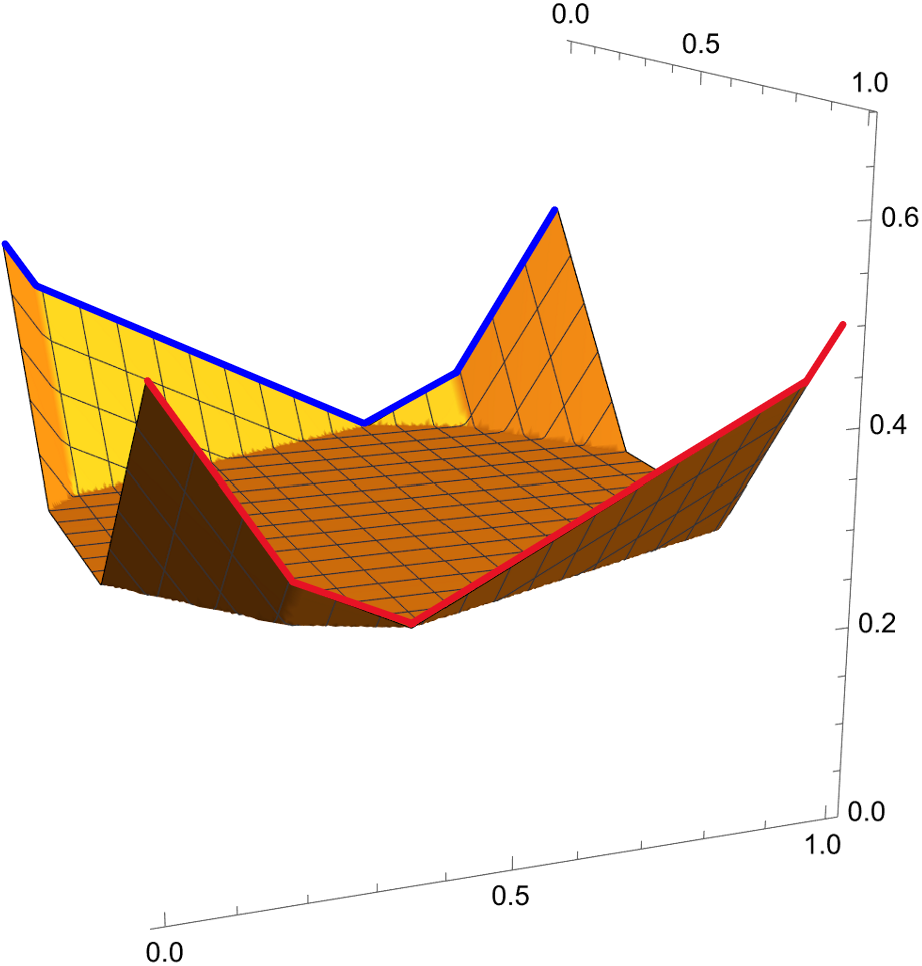}
\caption{(Left) Illustration of two possible functions $f_0,f_1 \in \xoverline{\F}_{cont}$ (blue and red) of the continuous adversary \textbf{Adv-Cont+}, for $d=1$. (Right) Illustration of the function $\psi_{(f_0,f_1)}$ for the mixed-integer adversary \textbf{Adv-MI} obtained by placing the functions $f_0$ and $f_1$ on the fibers $\{0\} \times \R$ and $\{1\} \times \R$ and interpolating appropriately between the fibers.
}
  \label{fig:hardFuncMI}
\end{figure}

\paragraph{Construction of the functions $\F_{MI}$.} For a 0/1 point $\bar{\x} \in \{0,1\}^n$ and a function $f \in \xoverline{\mathcal{F}}_{cont}$ in $\R^d$, 
we first define its (convex) extension to the full-dimensional space $\R^{n+d}$ as 
    \begin{align}
        \hat{f}_{\bar{\x}}(\x,\y) = \max\Big\{f(\y)  + \langle \mathbf{M}_{\bar{\x}}, \x - \bar{\x}\rangle~,~\OPT \Big\}, \label{eq:funcExt}
    \end{align}
    with $\mathbf{M}_{\bar{\x}} := 3MR \cdot sgn(\bar{\x}-0.5\cdot\ones),$ where $sgn$ denotes the sign function. This construction effectively places $f $ along the $\y$ space at the fiber $\bar{\x}$ and extends it in each of the $\x$ variables via a linear function with slope $\pm 3MR$, in a way that it decreases the value as it moves into the unit cube, or equivalently, away from $\bar{\x}$; it then truncates the final value to being at least $\OPT$; see Figure \ref{fig:single_function_extension} for an illustration.
    We note for later use that wherever the extension is not truncated by $\OPT$, a subgradient is given by appending the vector $\mathbf{M}_{\bar{\x}}$ to a subgradient of $f$, and otherwise the all zeroes vector is a subgradient. More precisely, we have  
    \begin{align}
        \partial \hat{f}_{\bar{\x}}(\x,\y) \supseteq \left\{\begin{array}{ll}
            \{\mathbf{M}_{\bar{\x}}\} \times \partial f(\y) & \textrm{, if $\hat{f}_{\bar{\x}}(\x,\y) > \OPT$} \\
            \{\zero\} & \textrm{, otherwise.}
        \end{array}\right.   \label{eq:subgradFHat}
    \end{align}

\begin{figure}[h]
\centering
\includegraphics[height=6cm]{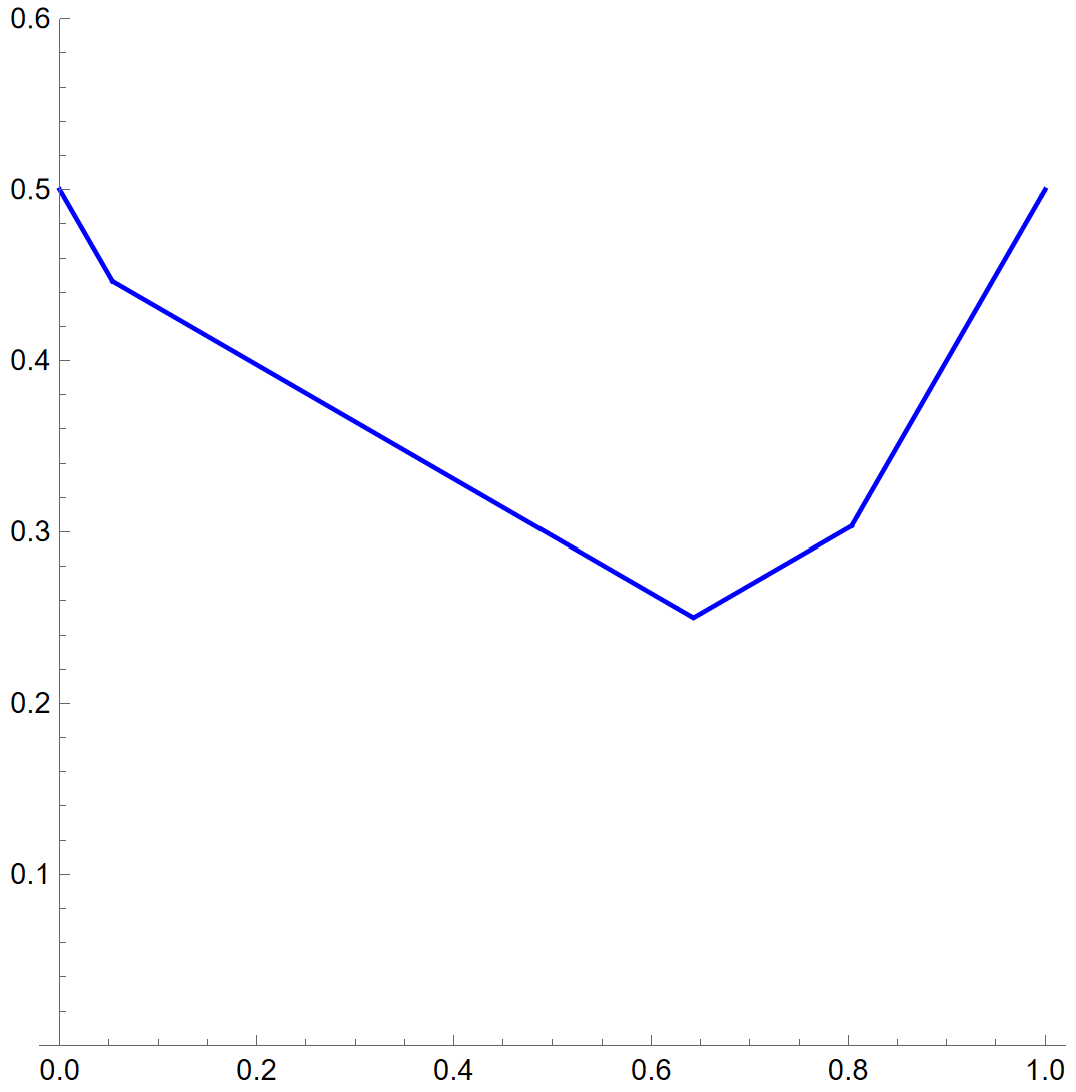}
\hspace{1cm}
\includegraphics[height=6cm]{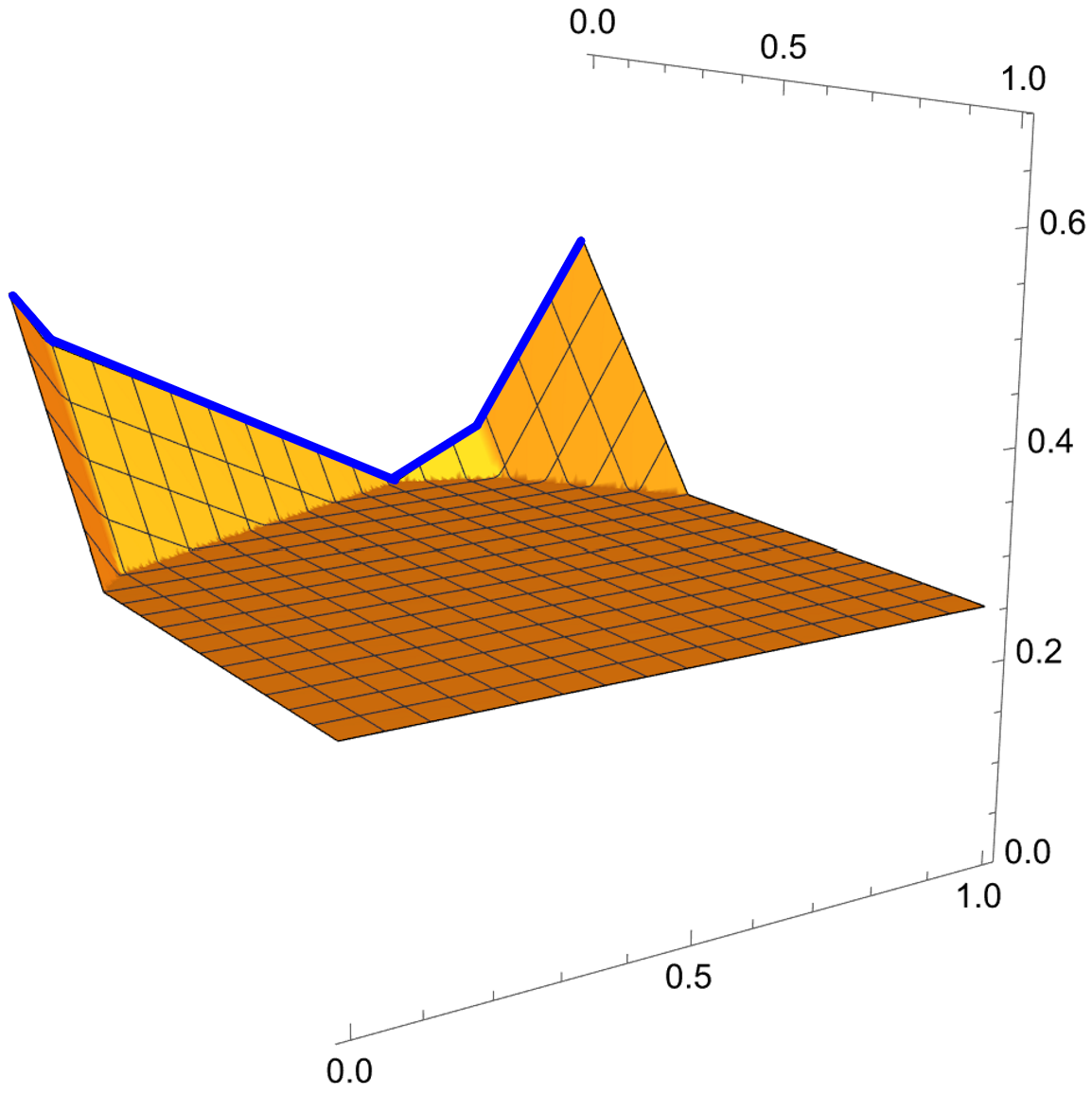}
\caption{An example of a possible function from $f\in \mathcal{F}_{cont}$ (left) together with an illustration of its truncated extension $\hat{f}_0(\x, \y)$ (right) as constructed in \eqref{eq:funcExt}.}
  \label{fig:single_function_extension}
\end{figure}

    
    Given a collection $\tF = (f_{\bar{\x}})_{\bar{\x}}$ with one function $f_{\bar{\x}} \in \xoverline{\mathcal{F}}_{cont}$ for each 0/1 point $\bar{\x}$, we combine them into the convex function
    \begin{align}
    \psi_{\texttt{F}}(\x, \y) := \max_{\bar{\x}\in \{0,1\}^n} \hat{f}_{\bar{\x}}(\x, \y),  \label{eq:psi}
    \end{align}
    where we abuse notation slightly and write the convex extension $(\widehat{f_{\bar \x}})_{\bar\x}$ as simply $\hat{f}_{\bar{\x}}$ to simplify the notation.
    As mentioned above, a crucial property of these functions is that their behavior between the fibers is determined by the behavior on the closest fiber. Intuitively, the slope $\pm 3 MR$ guarantees that as the $\x$ argument moves away from the base fiber $\bar{\x}$ of each extended function $\hat{f}_{\bar{\x}}$, $\hat{f}_{\bar{\x}}$ decreases rapidly enough so that the maximum in \eqref{eq:psi} is always achieved by the extended function at the closest fiber to $\x$. Figure \ref{fig:hardFuncMI} illustrates this, where one can see that both functions placed on the fibers get fully truncated in between the fibers. To make this precise, let $r(\x): [0,1]^n \rightarrow \{0,1\}^n$ map any $\x$ in the box to its closest 0/1 point in $\ell_\infty$-norm, that is $r(\x) := \argmin_{\x'} \{\|\x - \x'\|_\infty: \x' \in \{0,1\}^n\}$.
    
    \begin{lemma}\label{lemma:psi}
    For every collection $\tF = (f_{\bar{\x}})_{\bar{\x}}$, for every point $(\x,\y) \in [0,1]^n \times [-R, R]^d$ we have 
        \begin{align*}
            \psi_{\texttt{F}}(\x,\y) = \hat{f}_{r(\x)}(\x,\y), ~~~~\textrm{ and }~~~~ \partial \psi_{\texttt{F}}(\x,\y) = \partial \hat{f}_{r(\x)}(\x,\y)
        \end{align*}
    \end{lemma}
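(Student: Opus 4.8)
The plan is to prove the two claimed identities---$\psi_{\texttt{F}}(\x,\y) = \hat{f}_{r(\x)}(\x,\y)$ and the corresponding subdifferential equality---by showing that for every $(\x,\y) \in [0,1]^n \times [-R,R]^d$, the maximum defining $\psi_{\texttt{F}}$ in \eqref{eq:psi} is attained by the term indexed by the closest $0/1$ point $r(\x)$, and moreover that every other term is dominated by the truncation value $\OPT$ (so it contributes no new subgradients). Concretely, I would fix $(\x,\y)$ and compare $\hat{f}_{\bar{\x}}(\x,\y)$ for an arbitrary $\bar{\x} \in \{0,1\}^n$ against $\hat{f}_{r(\x)}(\x,\y)$. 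Writing $\r := r(\x)$, since $\r$ minimizes $\|\x - \bar\x\|_\infty$ over $\{0,1\}^n$, each coordinate of $\x$ satisfies $|x_i - r_i| \le 1/2$, and in the sign convention $\mathbf{M}_{\r} = 3MR \cdot \mathrm{sgn}(\r - 0.5\cdot\ones)$ the inner product $\langle \mathbf{M}_{\r}, \x - \r\rangle$ is nonnegative on the box (each coordinate moving from $\r$ into $[0,1]^n$ moves ``downhill'' for $\hat f_{\r}$, i.e.\ $\langle \mathbf M_{\r}, \x - \r\rangle \ge -3MR\cdot n/2$ — I will need to be careful about the exact sign here and will double-check against \eqref{eq:funcExt}).

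The key quantitative step is the following: for $\bar{\x} \ne \r$, I want to show $\hat{f}_{\bar{\x}}(\x,\y) = \OPT$, i.e.\ $f_{\bar\x}(\y) + \langle \mathbf{M}_{\bar\x}, \x - \bar\x\rangle \le \OPT$. The point is that $\|\x - \bar\x\|_\infty \ge \|\x - \r\|_\infty$ and, since $\bar\x \ne \r$ differ in at least one coordinate while $\x$ is within distance $1/2$ of $\r$, in that coordinate $\x$ is at $\ell_\infty$-distance $\ge 1/2$ from $\bar\x$ on the ``wrong side'', so $\langle \mathbf{M}_{\bar\x}, \x - \bar\x \rangle \le -3MR \cdot \tfrac12 + (\text{contribution} \le 3MR\cdot\tfrac{n-1}{2})$; hmm, this naive bound is not obviously negative enough, so the real argument must use that $f_{\bar\x}$ is $M$-Lipschitz on $[-R,R]^d$ and the box has radius $R$, so $f_{\bar\x}(\y) \le f_{\bar\x}(\y^\star) + MR\cdot$(something)$\le \OPT + 2MR$ or a similar bound, and then the slope $3MR$ dominates. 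I would formalize this: on the fiber $\bar\x$, $f_{\bar\x}(\y) \le \OPT + 2MR$ (using $\|\y - \y^\star\|_\infty \le 2R$ within the box and $M$-Lipschitzness, and that $\OPT$ is the common optimal value); meanwhile $\langle \mathbf{M}_{\bar\x}, \x-\bar\x\rangle \le -3MR$ whenever $\bar\x$ differs from $r(\x)$ in a coordinate where the sign is maximally adverse, which after summing coordinatewise using $\|\x-\bar\x\|_\infty \le 1$ gives a total $\le -3MR + 3MR(n-1) \cdot(\ldots)$ — so actually this requires that the \emph{net} inner product is $\le -2MR$, which I expect holds because $r(\x)$ is the closest point and for the non-closest $\bar\x$ the coordinatewise gain from $\r$ to $\bar\x$ is at least $1$ in $\ell_\infty$ while the potential loss in other coordinates is bounded by the same quantities that appear (bounded above by $0$ since $\r$ is closest). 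This coordinate-by-coordinate bookkeeping on the sign of $\langle \mathbf M_{\bar\x}, \x - \bar\x\rangle$ versus $\langle \mathbf M_{\r}, \x - \r\rangle$ is the main obstacle and where I would spend the most care.

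Once the value identity is established, the subdifferential identity follows almost formally: $\psi_{\texttt{F}} = \max_{\bar\x} \hat f_{\bar\x}$ is a pointwise max of finitely many convex functions, so $\partial \psi_{\texttt{F}}(\x,\y) = \conv\big(\bigcup_{\bar\x \in A(\x,\y)} \partial \hat f_{\bar\x}(\x,\y)\big)$ where $A(\x,\y)$ is the set of active indices. I will argue $A(\x,\y)$ either equals $\{r(\x)\}$, or---in the boundary case where some other $\hat f_{\bar\x}$ is also active---every active $\hat f_{\bar\x}$ with $\bar\x \ne r(\x)$ has value exactly $\OPT$ and hence (by \eqref{eq:subgradFHat}) contributes only $\{\zero\}$ to the subdifferential; similarly if $\hat f_{r(\x)}(\x,\y) = \OPT$ then $\partial \hat f_{r(\x)}(\x,\y) \ni \zero$ as well, so the convex hull collapses to $\partial \hat f_{r(\x)}(\x,\y)$. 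I should also handle the edge case where $\x$ is equidistant to two $0/1$ points (so $r(\x)$ is not uniquely defined by the $\argmin$); in that case both candidate extensions are truncated to $\OPT$ at $(\x,\y)$ by the slope argument, so the value and subdifferential are well-defined regardless of the tie-break, and the identity holds for either choice of $r(\x)$. I would close by remarking that the restriction $\y \in [-R,R]^d$ and $\x \in [0,1]^n$ is exactly what makes the Lipschitz-versus-slope comparison go through, which is why the lemma is stated on the box.
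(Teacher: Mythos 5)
Your overall plan---show that $\hat f_{\bar\x}(\x,\y)$ is truncated to $\OPT$ for $\bar\x\neq r(\x)$, then pass to the subdifferential via the active-index formula for a pointwise max---matches the paper's structure, but two steps would fail as written. The first is the sign analysis of $\langle \mathbf{M}_{\bar\x}, \x - \bar\x\rangle$: you worry that other coordinates might contribute up to $+3MR(n-1)/2$ and flag the ``coordinate-by-coordinate bookkeeping'' as the main obstacle, but there is nothing to track. For every $\x\in[0,1]^n$ and $\bar\x\in\{0,1\}^n$, each term equals $3MR\cdot sgn(\bar\x_j-0.5)(\x_j-\bar\x_j) = -3MR\,|\x_j-\bar\x_j|\le 0$ (check $\bar\x_j=0$ and $\bar\x_j=1$ separately), so $\langle \mathbf{M}_{\bar\x}, \x - \bar\x\rangle = -3MR\,\|\x - \bar\x\|_1 \le -3MR\,\|\x - \bar\x\|_\infty$ with no cancellation whatsoever. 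This is the structural observation that drives the lemma; it is what the paper uses, via a clean split on whether $\x$ lies within $\ell_\infty$-distance $1/3$ of some $\bar\x$ or not, to get the \emph{strict} inequality $f_{\bar\x}(\y) + \langle \mathbf{M}_{\bar\x}, \x - \bar\x\rangle < \OPT$ for every $\bar\x$ other than the one nearest $\x$. Without this observation the quantitative comparison you attempt does not close.

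The second gap is in the subdifferential step. You claim that if some $\bar\x \neq r(\x)$ is active with $\hat f_{\bar\x}(\x,\y) = \OPT$, then \eqref{eq:subgradFHat} shows it contributes only $\{\zero\}$. But \eqref{eq:subgradFHat} gives $\partial \hat f_{\bar\x}(\x,\y) \supseteq \{\zero\}$, not equality; if the two branches of the max in \eqref{eq:funcExt} are tied exactly at $(\x,\y)$, then $\partial\hat f_{\bar\x}(\x,\y) = \conv\big(\{\zero\}\cup(\{\mathbf{M}_{\bar\x}\}\times\partial f_{\bar\x}(\y))\big)$, which is strictly larger than $\{\zero\}$. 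The equality $\partial\hat f_{\bar\x}(\x,\y) = \{\zero\}$ needs precisely the strict inequality above, which gives a neighborhood of $(\x,\y)$ on which $\hat f_{\bar\x}$ is identically $\OPT$---this is the neighborhood argument the paper carries out in both of its cases. Once the strictness is in place, your active-index formula and your tie-break remark about non-unique $r(\x)$ both go through.
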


\begin{proof}
Define $B_{\Bar{\x}}:=\left\{\x' \in \R^n:\|\x' - \Bar{\x}\|_\infty\leq  \frac{1}{3}\right\}$ for every $\bar\x \in \{0,1\}^n$. Consider an arbitrary $(\x,\y) \in [0,1]^n \times [-R, R]^d$.
\medskip

\medskip

\noindent\underline{Case 1: $\x \not\in B_{\bar\x}$ for any $\bar\x \in \{0,1\}^n$.} This implies that for every $\bar\x \in \{0,1\}^n$, we have 

\begin{align*}
    f_{\bar{\x}}(\y) + \langle \mathbf{M}_{\bar{\x}}, \x - \bar{\x}\rangle &= f_{\bar{\x}}(\y)+\sum_{j \in [n]} 3MR \cdot sgn(\bar{\x}_j-0.5) \cdot (\x_j - \bar{\x}_j) \\ &\leq f_{\bar{\x}}(\y)-3MR\cdot \max_{j \in [n]} |\x_j - \bar{\x}_j| < f_{\bar{\x}}(\y) -MR \leq \OPT,
\end{align*}

Thus, $\hat f_{\bar\x}(\x,\y) = \OPT$  for all $\bar\x \in \{0,1\}^n$. As a result, $\psi_F(\x,\y) = \OPT = \hat f_{r(\x)}(\x,\y)$. 

Moreover, since $f_{\bar{\x}}(\y) + \langle \mathbf{M}_{\bar{\x}}, \x - \bar{\x}\rangle<\OPT$ for all $\bar\x \in \{0,1\}^n$, there exists a neighborhood of $(\x,\y)$ such that for any point $(\x',\y')$ in the neighborhood, it holds that $f_{\bar{\x}}(\y') + \langle \mathbf{M}_{\bar{\x}}, \x' - \bar{\x}\rangle< \OPT$, and  $\hat f_{\bar\x}(\x',\y') = \OPT$ for all  $\bar\x \in \{0,1\}^n$. As a result, $\partial \hat f_{\bar\x}(\x,\y)=\{\0\}$ for all  $\bar\x \in \{0,1\}^n$, and $\partial\psi_F(\x,\y) = \{\0\}.$

\medskip

\noindent\underline{Case 2: $\x \in B_{\bar\x}$ for some $\bar\x \in \{0,1\}^n$.} In this case, $r(\x) = \bar\x$. This is because for any $\tilde\x\in \{0,1\}^n\backslash\{\bar\x\}$, we have that 
\begin{align}\label{eq:neightbor-x}
    \|\x - \tilde\x\|_\infty\geq \frac{2}{3}>\frac{1}{3}\geq\|\x - \Bar{\x}\|_\infty.
\end{align}
 It is also true that $\hat{f}_{\Bar{\x}}(\x,\y)\geq\OPT= \hat{f}_{\tilde\x}(\x,\y)$ for any $\tilde\x\neq \Bar{\x}$, which holds due to the result from Case 1. 

Moreover, the arguments from Case 1 and (\ref{eq:neightbor-x}) imply that there exists a neighborhood of $(\x,\y)$ such that for any point $(\x',\y')$ in the neighborhood, it holds that $r(\x') = r(\x) = \Bar{\x}$, and $\hat{f}_{\Bar{\x}}(\x',\y')\geq\OPT= \hat{f}_{\tilde\x}(\x',\y')$ for all $\tilde\x\neq \Bar{\x}$. As a result, $\psi_F(\x',\y') = \hat f_{\Bar\x}(\x',\y') = \hat f_{r(\x')}(\x',\y') = \hat f_{r(\x)}(\x',\y')$ and $\partial\psi_F(x,y)= \partial \hat{f}_{\Bar{\x}}(\x,\y) = \partial\hat{f}_{r(\x)}(\x,\y).$
\end{proof}


\paragraph{Construction of the mixed-integer adversary \textbf{Adv-MI}.} We finally describe \textbf{Adv-MI} in Procedure~\ref{proc:advMI}. Its main property is captured in the following invariant.

    \vspace{4pt}
    \begin{mdframed}
    \vspace{-7pt}
    \begin{proc} \label{proc:advMI}
     \normalfont
     \textbf{Adv-MI} 

    \vspace{4pt}
    \noindent Instantiate a copy of \textbf{Adv-Cont+} on each fiber $\x \in \{0,1\}^n$, and let $S(\x)$ denote the set of surviving functions maintained in every round by this copy, initialized to $\overline{\mathcal{F}}_{cont}$.  
    
    \vspace{6pt}
    \noindent For each round $t = 1,2\ldots$\,:

    \vspace{-8pt}
    \begin{enumerate}[leftmargin=18pt]
        \item  \textbf{Adv-MI} receives the query $(\x_t,\y_t)$ from the algorithm. Send $\y_t$ to the adversary \textbf{Adv-Cont+} associated with the closest fiber $r(\x_t)$, which then returns a value $v$ and subgradient $\g$, and updates its maintained set of surviving functions $S(r(\x_t))$ of its fiber $r(\x_t)$. 
    
        \item \textbf{Adv-MI} returns as its response to the query $(\x_t,\y_t)$ the value 
        \begin{align*}
            \tilde{v}_t = \max\Big\{v +  \langle \mathbf{M}_{r(\x_t)}, \x_t - r(\x_t)\rangle~,~\OPT    \Big\},
        \end{align*}
        and as subgradient returns either $\tilde{\g}_t = (\mathbf{M}_{\bar{\x}}, \g)$ or $\tilde{\g}_t = \zero$ depending whether  $\tilde{v}_t > \OPT$ or not (i.e., whether $\hat{f}_{r(\x)}$ was truncated at $(\x_t, \y_t)$ or not), respectively. 
    \end{enumerate} 
        
\end{proc}
\end{mdframed}
\medskip

\begin{invariant} \label{inv:advMI}
    There exists a first order chart $\cG$ (derived from $\cG_0$) such that, for any algorithm, the sets $S(\x)$, $\x \in \{0,1\}^n$ maintained by \textbf{Adv-MI} satisfy the following property.

    In every round, for every collection $\tF = (f_{\x})_{\x\in \{0,1\}^n}$ of current surviving functions $f_{\x} \in S(\x)$ for $\x \in \{0,1\}^n$, 
    the function $\psi_{\tF}$ is consistent with the response returned by \textbf{Adv-MI} under the full-information first-order oracle $\cO(\cG)$, i.e., $\psi_{\tF}(\x_t,\y_t) = \tilde{v}_t$ and $\tilde{\g}_t \in \partial \psi_{\tF}(\x_t,\y_t)$. 
\end{invariant}
%
\medskip

Notice that Invariant \ref{inv:advMI} is indeed maintained after each response in Step 2 of Procedure~\ref{proc:advMI}: For every collection $\tF = (f_{\bar{\x}})_{\bar{\x}}$ of still surviving functions $f_{\bar{\x}} \in S(\bar{\x})$, by the consistency guarantee of \textbf{Adv-Cont+} (Item 1 of Lemma \ref{lemma:advContPlus}) the function $f_{r(\x_t)}$ selected for the fiber $r(\x_t)$ has value $v$ and subgradient $\g$ at $\y_t$; 
thus, Lemma~\ref{lemma:psi} combined with \eqref{eq:funcExt} implies that the function $\psi_{\tF}$ has value $$\psi_{\tF}(\x_t,\y_t) =   \hat{f}_{r(\x)}(\x_t,\y_t) = \max\Big\{f_{r(\x_t)}(\y_t)  + \langle \mathbf{M}_{r(\x_t)}, \x_t - r(\x_t)\rangle~,~\OPT \Big\} = \tilde{v}_t,$$ and similarly from \eqref{eq:subgradFHat} we see that $\tilde{\g}$ is a subgradient in $\partial \psi_{\tF}(\x_t,\y_t) = \partial \hat{f}_{r(\x_t)}(\x_t,\y_t)$ 
, as desired.

\medskip
We now prove that \textbf{Adv-MI} is $\eps$-hard for $2^n \ell - 1$ rounds; using Lemma \ref{lemma:infoAdv}, this implies Theorem \ref{thm: transfer theorem pure optimization} for the case of full-information first-order oracle. Suppose the optimization algorithm runs for fewer than $2^n \cdot \ell$ iterations. Then there is a fiber $\x^* \in \{0,1\}$ where \textbf{Adv-MI} sent at most $\ell-1$ queries to the adversary \textbf{Adv-Cont+} of the fiber $\x^*$. 
Thus, by the guarantee of the latter (Item 2 of Lemma \ref{lemma:advContPlus}), the surviving set $S(\x^*)$ has some finite collection of functions $f^1_{\x^*}, ..., f^k_{\x^*}$ with no common $\eps$-approximate solution. 
Consider the collections $\tF^1, ..., \tF^k$ of surviving functions that have $f^1_{\x^*}, ..., f^k_{\x^*}$, respectively, for the fiber $\x^*$ and any function $f_{\bar{\x}} \in S(\bar{\x})$ with optimal value $> \OPT +\eps$ for each of the other fibers $\bar{\x} \neq \x^*$, which exist on each of the other fibers by Item 3 of Lemma \ref{lemma:advContPlus}. By Invariant \ref{inv:advMI}, all functions $\psi_{\tF^1}, ..., \psi_{\tF^k}$ are compatible with the responses returned by \textbf{Adv-MI}. The desired $\eps$-hardness of \textbf{Adv-MI} then follows from the following claim, which then concludes the proof. 

\begin{claim}\label{claim: psiF1 vs F2 have disjoint eps sol}
    The functions $\psi_{\tF^1}, ..., \psi_{\tF^k}$ share no common $\eps$-approximate solution. 
\end{claim}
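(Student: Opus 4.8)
The plan is a proof by contradiction: assume some feasible $(\x,\y)$ is simultaneously an $\eps$-approximate solution of all of $\psi_{\tF^1},\dots,\psi_{\tF^k}$, and deduce that its continuous component $\y$ is a common $\eps$-approximate solution of $f^1_{\x^*},\dots,f^k_{\x^*}$, contradicting the choice of these functions in Item~2 of Lemma~\ref{lemma:advContPlus}. As a preliminary I would record that each $\psi_{\tF^i}$ has optimal value exactly $\OPT$ over the feasible region: by \eqref{eq:funcExt} and \eqref{eq:psi} we have $\psi_{\tF^i}\ge\OPT$ everywhere, while if $\y^\star_i\in[-R,R]^d$ is an optimizer of $f^i_{\x^*}\in\F_{cont}$ then Lemma~\ref{lemma:psi} (with $r(\x^*)=\x^*$) gives $\psi_{\tF^i}(\x^*,\y^\star_i)=\max\{f^i_{\x^*}(\y^\star_i),\OPT\}=\OPT$ at the feasible point $(\x^*,\y^\star_i)$. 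Thus the contradiction hypothesis becomes $\psi_{\tF^i}(\x,\y)\le\OPT+\eps$ for every $i\in[k]$, with $\x\in\Z^n$ and $\y\in[-R,R]^d$.

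The core of the argument is to pin down $\x=\x^*$, which I would do by casework using Lemma~\ref{lemma:psi}. We may assume $\eps<2MR$, since otherwise every point of $[-R,R]^d$ is already a common $\eps$-approximate solution of $\F_{cont}$ and the continuous lower bound $\ell$ would be zero. First, a feasible integer point necessarily lies on a $0/1$ fiber: this is automatic if the feasible region restricts the integer variables to $\{0,1\}^n$, and otherwise one checks that for integer $\x\notin\{0,1\}^n$, taking $\bar\x\in\{0,1\}^n$ with $\bar\x_j=1$ exactly when $\x_j\ge1$ makes each coordinate contribute nonnegatively to $\langle\mathbf{M}_{\bar\x},\x-\bar\x\rangle$ while the out-of-range coordinate contributes at least $3MR$, so by \eqref{eq:funcExt}, $\psi_{\tF^i}(\x,\y)\ge\hat f_{\bar\x}(\x,\y)\ge f_{\bar\x}(\y)+3MR\ge\OPT+3MR>\OPT+\eps$ (using that every function in $\tF^i$ is $\ge\OPT$ on $[-R,R]^d$). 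Second, if $\x=\bar\x\in\{0,1\}^n$ with $\bar\x\ne\x^*$, then $r(\bar\x)=\bar\x$ and Lemma~\ref{lemma:psi} gives $\psi_{\tF^i}(\bar\x,\y)=\hat f_{\bar\x}(\bar\x,\y)=\max\{f_{\bar\x}(\y),\OPT\}\ge f_{\bar\x}(\y)>\OPT+\eps$, because the function placed on every fiber other than $\x^*$ was chosen to have minimum value $>\OPT+\eps$ (its existence guaranteed by Item~3 of Lemma~\ref{lemma:advContPlus}). Both cases contradict $\psi_{\tF^i}(\x,\y)\le\OPT+\eps$, forcing $\x=\x^*$.

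It then remains to conclude: since $r(\x^*)=\x^*$ and $\langle\mathbf{M}_{\x^*},\x^*-\x^*\rangle=0$, Lemma~\ref{lemma:psi} and \eqref{eq:funcExt} give $\psi_{\tF^i}(\x^*,\y)=\max\{f^i_{\x^*}(\y),\OPT\}\le\OPT+\eps$, hence $f^i_{\x^*}(\y)\le\OPT+\eps$ for every $i\in[k]$. As $\OPT$ is the common optimal value of the continuous functions $f^1_{\x^*},\dots,f^k_{\x^*}$ and $\y\in[-R,R]^d$, this says $\y$ is a common $\eps$-approximate solution of them, contradicting Item~2 of Lemma~\ref{lemma:advContPlus} (which applies precisely because fewer than $2^n\ell$ total rounds means the copy of \textbf{Adv-Cont+} on fiber $\x^*$ received at most $\ell-1$ queries). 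I do not expect a genuine obstacle here: the only delicate point is the bookkeeping around the truncation at $\OPT$ in \eqref{eq:funcExt} --- ensuring the $\pm3MR$ slopes are steep enough that no integer fiber other than $\x^*$ can host an $\eps$-approximate solution --- and this is exactly the content that Lemma~\ref{lemma:psi} was engineered to supply, so what remains is careful but routine casework.
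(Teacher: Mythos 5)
Your proof is correct and follows essentially the same route as the paper's: show that on every $0/1$ fiber other than $\x^*$ the function $\psi_{\tF^i}$ exceeds $\OPT+\eps$, that on $\x^*$ it reduces to $f^i_{\x^*}$, and then invoke Item~2 of Lemma~\ref{lemma:advContPlus} to conclude. The paper's own argument is a terse one-paragraph direct version of this; you frame it as a contradiction, which is a cosmetic difference. You do add two pieces of bookkeeping the paper leaves implicit: (i) a preliminary check that each $\psi_{\tF^i}$ has infimum exactly $\OPT$ (needed so that ``$\eps$-approximate solution'' means ``value $\le \OPT+\eps$''), and (ii) the casework ruling out integer fibers outside $\{0,1\}^n$. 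Item (ii) is correct but becomes moot once one notes that \textbf{Adv-MI} is constructed for instances over $\{0,1\}^n\times\R^d$ (see the opening of Section~\ref{sec:MIAdv}), so the feasible region already restricts the integer variables to $\{0,1\}^n$; you do flag this yourself. Both additions strengthen the exposition without changing the substance.
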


\begin{proof}
    From the construction above, we have that $\tF^\dag:=\tF^1\backslash \{f^1_{\x^*}\} =\tF^2\backslash \{f^2_{\x^*}\}...= \tF^k\backslash \{f^k_{\x^*}\}$. 
    Due to (\ref{eq:psi}) and the definitions of $\tF^1, ..., \tF^k$, for any fiber $\bar{\x}\neq \x^*$ and $f_{\bar{\x}}\in \tF^\dag$, it follows that $\psi_{\tF^1}(\bar{\x},\y) =...= \psi_{\tF^k}(\bar{\x},\y) = f_{\bar{\x}}(\y)> \OPT +\eps$. Thus, the $\eps$-approximate solutions for the functions $\psi_{\tF^1}, ..., \psi_{\tF^k}$ only exist within the fiber $\x^*$. Given that the $\eps$-approximate solutions of $f^1_{\x^*}, ..., f^k_{\x^*}$ are disjoint, and considering that $\psi_{\tF^j}(\x^*,\y) = f^j_{\x^*}(\y)$ for all $j\in [k]$, we can conclude our proof.
\end{proof}


\subsection{Proof of Theorem \ref{thm: transfer theorem pure optimization} for general oracles}

We now prove Theorem \ref{thm: transfer theorem pure optimization} in full generality. We will do this using the exact same family of difficult functions $\psi_{\texttt{F}}$ from \eqref{eq:psi}, and also with the same idea of constructing a mixed-integer adversary that produces its answers by making queries to an adversary for the continuous problems on the fibers. Since the mixed-integer adversary will need to answer queries made in the full $\R^n \times \R^d$ space by making queries in the continuous space $\R^d$ on each fiber, we will require that the set of permissible queries that can be made to the continuous adversary is, in some sense, as rich as the queries allowed in the full space. For example, if one allows full-information queries to be made in $\R^n \times \R^d$, but only binary queries to be made in $\R^d$ to the continuous adversary, one would struggle to determine how the mixed-integer adversary should answer those full-information queries by making only binary queries to the adversaries for the continuous subproblems. 
Specifically, for a query at $(\x, \y)\in \R^n \times \R^d$, knowing how $\x$ affects the function values and subgradients of $\psi_\tF$, the mixed-integer adversary needs to be able to determine what response to give by making a suitably chosen query about $f_{r(\x)}$ to the continuous adversary. 
We formalize this requirement of having the same richness of queries for the continuous subproblems as for the full $\R^n \times \R^d$ space with the concept of {\em hereditary queries}. 

\paragraph{Hereditary queries.} For simplicity, we define the notion of hereditary queries for unconstrained problems (i.e., only for value/subgradient queries), but we remark that the same idea can be applied to separation queries as well. 

\begin{definition}\label{def:hereditary}
Let $\{\cH_{n,d}^{\val}\}_{n,d \in \N}$ and $\{\cH_{n,d}^{\sub}\}_{n,d \in \N}$ be classes of permissible function value and subgradient queries, respectively, with response sets (codomains) $H_{n,d}^{\val}$ and $H_{n,d}^{\sub}$. $\{\cH_{n,d}^{\val}\}_{n,d \in \N}$ and $\{\cH_{n,d}^{\sub}\}_{n,d \in \N}$ are said to be \emph{hereditary} if the following holds for all $n, d \in \N$ and functions $\mathcal{M}: \{0,1\}^n \rightarrow \R^n$. 
For any  $\x \in \{0,1\}^n$, $\delta \in \R$, $h^{\val} \in \cH_{n,d}^{\val}$, and $h^{\sub} \in \cH_{n,d}^{\sub}$, 
there exists $h_*^\val \in \cH_{0,d}^{\val}$, 
$h_*^{\sub} \in \cH_{0,d}^{\sub}$ and functions $B^\val: H_{0,d}^{\val}\to H_{n,d}^{\val}$, $B^\sub: H_{0,d}^{\sub}\to H_{n,d}^{\sub}$ 
such that  
\begin{align}
    B^\val(h_*^\val(v)) &= h_\val(v+\delta)\qquad \forall v \in \R, \label{eq: hereditary function value condition}\\
    B^\sub(h_*^\sub(v,\g)) &= h_\sub(\mathcal{M}(\x), \g)\hspace{2mm} \forall \g \in \R^d \label{eq: hereditary subgradient condition}
\end{align}

\end{definition}


Intuitively, a class of queries being hereditary has the consequence that if for a point $(\x, \y) \in \R^n \times \R^d$, one knows exactly the $\x$ component $\cM(\x)$ of the subgradient, then one can simulate a query in the $\R^n \times \R^d$ space by only making a query on the $\R^d$ space, and similarly that there are queries rich enough to consider shifted function values $v+\delta$, where the interpretation is that $\delta$ is the effect $\x$ has on the overall function value -- see~\eqref{eq:funcExt}. 


\begin{example}
We show that natural premissible queries, as from Definition \ref{def:oracle-example}, are hereditary. Let $\cM(\x), \delta$ be as in Definition \ref{def:hereditary}.

\begin{enumerate}
    \item (Full-information first-order oracle) If $\cH_{n,d}^{\val}$ and $\cH_{n,d}^{\sub}$ are simply the identity functions, then we can take $B^\val$ to be $B^\val(v) = v+\delta$ and take $B^\sub$ to be the ``lifting/rotation" map $B^\sub(\g) = (\mathcal{M}(\x), \g)$, noting that $h^\sub, ~h^\val, ~h_*^\sub$, and $h_*^\val$ are all the identity functions. 
    
    \item (General binary oracle) For the general binary oracle based on a first-order chart $\cG$, $B^\val$ and $B^\sub$ can be taken to be the identity map from $\{0,1\}$ to $\{0,1\}$, and one can take $h_*^\val(v) = h(v + \delta), ~h_*^\sub =  h(\mathcal{M}(\x), \g),$ which are permissible queries since all binary queries are permissible. 
    
    \item 
    (Shifted bit oracle) If $\cH_{n,d}^{\val}$ and $\cH_{n,d}^{\sub}$ are from a shifted bit oracle $\cH^{bit^*}$, then $B^\val$ can be taken to be the identity map from $\{0,1\}$ to $\{0,1\}$. For a query on the function value, if $h^\val$ reports some bit of $v+\delta$, then the appropriate hereditary query is exactly the query $h_*^\val$ such that  $h_*^\val(v) = h(v+\delta)$, i.e. using the shift $u = \delta$ in the notation of Definition \ref{def:oracle-example}. A subgradient bit query $h^\sub(\mathcal{M}(\x), \g)$ returns a bit of either $\mathcal{M}(\x)$ or $\g$, so there are two cases. 
    \begin{itemize}
        \item[i)] $h^\sub$ returns the $j^{th}$ bit of the $k^{th}$ entry of $\mathcal{M}(\x)$. Set $B^\sub(\cdot)$ to return exactly that bit of $\mathcal{M}(\x)$, no matter the input to $B^\sub$, so $h_*^\sub$ may be chosen arbitrarily. 
        \item[ii)] $h^\sub$ returns the $j^{th}$ bit of the $k^{th}$ entry of $\g$. Set $B^\sub$ to be the identity map from $\{0,1\}$ to $\{0, 1\}$, and set $h_*^\sub$ to return the desired bit of $\g$.
    \end{itemize}

    \item (Inner product threshold queries) If $\cH_{n,d}^{\val}$ and $\cH_{n,d}^{\sub}$ consist of the inner product threshold queries, take both $B^\val$ and $B^\sub$ to be the identity maps. For function value queries $h^\val_{u, c}(v) = sgn (u\cdot  (v+\delta) -c)$, use 
    $$h_*^\val(v) := h_{u, c-u\delta}^\val(v) = sgn (uv - (c-u\delta)),$$ 
    since then $h_*^\val(v) = sgn (uv - (c-u\delta)) = sgn (u\cdot  (v+\delta) -c) = h^\val_{u, c}(v)$ as desired. \\ 
    For subgradient queries $h^\sub_\u(\mathcal{M}(x), \g) = sgn (\langle \u, \mathcal{M}(x), \g \rangle-c),$ with $\u \in \R^{n+d}$, denote by $\u_n$ the vector of the first $n$ entries of $\u$, and by $\u_d$ the vector of the last $d$ entries of $\u$. One may use
    $$h_*^\sub(\g) := h_{\u_d, c-\langle \u_n, \mathcal{M}(\x)\rangle}^\sub (\g)= sgn \Big(\langle \u_d, \g \rangle - (c-\langle \u_n, \mathcal{M}(\x) \rangle)\Big),$$
    since then we similarly have 
    $$h_*^\sub(\g) = sgn \Big(\langle \u_d, \g \rangle - (c-\langle \u_n, \mathcal{M}(\x) \rangle)\Big) = sgn (\langle \u, (\mathcal{M}(x), \g) \rangle-c) = h^\sub_\u(\mathcal{M}(x), \g)$$ 
    as desired. For these hereditary queries, note that $h_{u, c-u\delta}^\val$ and $h_{\u_d, c-\langle \u_n, \mathcal{M}(\x)\rangle}^\sub$ are indeed in $\cH^\val_{0, d}$ and $\cH^\sub_{0, d}$, respectively, since $u\in \R$, $\u_n \in \R^n$, and $\u_d \in \R^d$. 
    
\end{enumerate}
\end{example}

 We remark here that $\cH^{bit}$, without the permitted ``shifts" allowed in $\cH^{bit^*}$ is not hereditary, as it may not satisfy condition~\eqref{eq: hereditary function value condition} for the function values for all $\delta$; however, any lower bounds obtained with $\cH^{bit^*}$ must also hold for $\cH^{bit}$, since the former is a richer class of queries. 


\paragraph{Definition of the adversary.} 
We will define \textbf{Adv-Cont+} analogously as in the full-information case, now receiving queries in $\cH$ according to the oracle setting considered. \textbf{Adv-Cont+} will be $\eps$-hard for $\ell$ rounds answering queries from $\cH$, and after $\ell-1$ rounds it commits to a single surviving function with optimal value $> OPT + \eps$. As queries for general oracles using first-order information consist of a point $\z = (\x, \y) \in \R^n \times \R^d$ and a permissible query $h\in \cH$, let us write $(\x, \y, h)$ for notational simplicity to denote such a query.  

We describe here the behavior of \textbf{Adv-Cont+} in the general oracle case, and such that it satisfies the same invariant of Lemma \ref{lemma:advContPlus} as in the full-information case, i.e. it is $\eps$-hard for $\ell$ rounds and only keeps a single surviving function with optimal value at least $OPT+\eps$ after $\ell$ queries have been made. 

  \vspace{4pt}
    \begin{mdframed}
    \vspace{-7pt}
    \begin{proc} \label{proc:advCont_general}
     \normalfont
     \textbf{Adv-Cont+} 

    \vspace{4pt}
    \noindent Initialize set of survived functions $S_0 = \xoverline{\mathcal{F}}_{cont}$

    \vspace{4pt}
    \noindent For each round $t = 1,2\ldots$\,:

    \vspace{-8pt}
    \begin{enumerate}[leftmargin=18pt]
        \item Receive query point $(\y_t, h_t) \in \R^d \times \cH$  from the optimization algorithm.
        
        \item  \vspace{-3pt}  If $t \le \ell - 1$: Send $(\y_t, h_t)$ to the adversary \textbf{Adv-Cont}, receiving back the answer $\alpha$. Obtain $S_{t}$ by removing from $S_{t-1}$ the functions $f$ that are not consistent with this response under any first-order chart $\cG$, namely $f$ for which $h_t(f(\y_t)) \not = \alpha$ if $h_t\in \cH^\val$, or for which there does not exist a $\g_t \in \partial f(\y_t)$ such that $h_t(f(\y_t),\g_t) = \alpha$ if $h_t\in \cH^\sub$.
        
        Send the response $\alpha$ to the optimization algorithm. 
       
         \item \vspace{-3pt} If $t = \ell$: Since \textbf{Adv-Cont} is $\eps$-hard for $\ell-1$ rounds, there is a finite collection of functions $\{f_1, ..., f_k\} \subset S_{t-1} \cap \mathcal{F}_{cont}$ that do not share an $\eps$-solution. Define their pointwise maxima $f_{\max} = \max\{f_1, ..., f_k\}$ and set $S_{t+k} = \{f_{\max}\}$, for all $k = 0, 1, 2...$. 
        
        Set the value $v_t$ to be $f_{\max}(\y_t)$ and set $\g_t$ to be a subgradient in $\partial f_{\max}(\y_t)$ (consistent with what the first order chart $\cG_0$ gives for $f_1, \ldots, f_k$ at $\y_t$, if $\y_t$ has been queried in an earlier round), and send the response $h_t(v_t)$ or $h_t(\g_t)$ to the optimization algorithm, according to whether $h_t\in \cH^\val$ or $h_t\in \cH^\sub$ respectively.
          
        \item \vspace{-3pt} If $t > \ell$: Let $f_{\max}$ be the only function in $S_{t-1}$.
        If $\y_t$ was queried in an earlier round $k$, answer $(v_k, \g_k)$. Otherwise, 
        as in the step above, set the value $v_t$ to be $f_{\max}(\y_t)$ and set $\g_t$ to be any subgradient in $\partial f_{\max}(\y_t)$, and again send the appropriate response $h_t(v_t)$ or $h_t(\g_t)$ to the optimization algorithm.
\end{enumerate}
\end{proc}
\end{mdframed}
\medskip

Proving that this \textbf{Adv-Cont+} satisfies the invariant from Lemma \ref{lemma:advContPlus} follows exactly the same steps as in the full-information case. Hence, using this \textbf{Adv-Cont+} and the same family of functions $\psi_{\texttt{F}}$ from \eqref{eq:psi}, we will be able to construct \textbf{Adv-MI} for this general oracle case to satisfy a version of Invariant \ref{inv:advMI}, slightly modified for this general case to refine what we mean by functions being consistent with responses given to the more general queries. To achieve this, let \textbf{Adv-MI} operate according to the following procedure.
 \begin{mdframed}
    \vspace{-7pt}
    \begin{proc} \label{proc:advMI_general}
     \normalfont
     \textbf{Adv-MI} 

    \vspace{4pt}
    \noindent Instantiate a copy of \textbf{Adv-Cont+} on each fiber $\x \in \{0,1\}^n$, and let $S(\x)$ denote the set of surviving functions maintained by this copy, initialized to $\overline{\mathcal{F}}_{cont}$.  
    
    \vspace{6pt}
    \noindent Set $\mathbf{M}_{x} := 3MR\cdot sgn(\x-0.5\cdot\ones)$, for any $\x\in\{0,1\}^n$, as in \eqref{eq:funcExt}. For each round $t = 1,2\ldots$\,:

    \vspace{-8pt}
    \begin{enumerate}[leftmargin=18pt]
        \item  \textbf{Adv-MI}  receives the query $(\x_t,\y_t, h_t)$ from the algorithm. Set $\delta := \langle \mathbf{M}_{r(\x_t)}, \x_t - r(\x_t)\rangle$. 
        \item Send the function value threshold query that answers``is $f_{r(\x_t)}(\y_t) \leq OPT + \delta$?" to the adversary \textbf{Adv-Cont+} of the closest fiber $r(\x_t)$, which responds and updates its set of surviving functions $S(r(\x_t))$.
        \vspace{0.5cm}\\
        \textit{If} the answer is yes, \textbf{Adv-MI} responds $h_t(OPT)$ or $h_t(\zero)$ to the original query, according to whether $h_t\in \cH^\val$ or $h_t\in \cH^\sub$, respectively.
        \vspace{0.5cm}\\
        \textit{Else}, determine an appropriate $B$ and hereditary query $h_*$ as from Definition~\ref{def:hereditary} using $r(\x_t)$, $\mathbf{M}_{r(\x_t)}$, $\delta$ and $h_t$, and send the query $(\y_t, h_*)$ to the adversary \textbf{Adv-Cont+} of the closest fiber $r(\x_t)$, which then returns some answer $\alpha$ and updates its set of surviving functions $S(r(\x_t))$. \textbf{Adv-MI} returns $B(\alpha)$ as its response to the original query. 
    \end{enumerate} 
        
\end{proc}
\end{mdframed}
\medskip

\begin{invariant} \label{inv:advMI general}

There exists a first order chart $\cG$ (derived from $\cG_0$) such that, for any algorithm, the sets $S(\x)$, $\x \in \{0,1\}^n$ maintained by \textbf{Adv-MI} satisfy the following property.

     In every round, for every collection $\tF = (f_{\x})_{\x\in \{0,1\}^n}$ of current surviving functions $f_{\x} \in S(\x)$ for $\x \in \{0,1\}^n$, 
    the function $\psi_{\tF}$ is consistent with the response returned by \textbf{Adv-MI} under under the oracle $\cO(\cH, \cG)$, i.e., the response \textbf{Adv-MI} gives is equal to $h_t(\tilde{v}_t, \tilde{\g}_t)$ for $\tilde{v}_t = \psi_{\texttt{F}}(\x_t, \y_t)$ and some $\tilde{\g}_t\in \partial \psi_{\texttt{F}}(\x_t, \y_t)$. 


\end{invariant}

We claim that the Invariant \ref{inv:advMI general} is maintained after each response in Step 2 of Procedure~\ref{proc:advMI_general}. 

If \textbf{Adv-Cont+} answers yes, then $\psi_{F}(\x_t, \y_t) = OPT$ and $\zero \in \partial \psi_{F}(\x_t, \y_t)$ for any collection of surviving functions $\tF = (f_{\bar{\x}})_{\bar{\x}}$, since all the extensions $\hat{f}_{\bar\x}$ as in \eqref{eq:funcExt} that are consistent with this affirmative response in Step 2 must be truncated at $(\x_t, \y_t)$. To see this, note that due to the choice of $\delta$, the query ``is $f_{r(\x_t)}(\y_t) \leq OPT + \delta$?" is equivalent to asking whether $\hat{f}_{r(\x_t)}$ has value $OPT$ at $(\x_t, \y_t)$ (i.e., was truncated), and Lemma \ref{lemma:psi} guarantees that we only need to consider $\hat{f}_{r(\x_t)}$ for $\psi_\tF(\x_t, \y_t)$.

If the answer is no, then Lemma~\ref{lemma:psi} combined with~\eqref{eq:funcExt} implies that for every choice $\tF$ of surviving functions from each fiber, the function $\psi_{\tF}$ has value 
\begin{align}\psi_{\tF}(\x_t,\y_t) =   \hat{f}_{r(\x)}(\x_t,\y_t) = \max\Big\{f_{r(\x_t)}(\y_t)  + \langle \mathbf{M}_{r(\x_t)}, \x_t - r(\x_t)\rangle~,~\OPT \Big\} = f_{r(\x_t)}(\y_t) + \delta, \label{eq: psi hereditary values}
\end{align}
and for its subgradient we have 
\begin{align}
    \g \in \partial \hat{f}_{r(\x_t)} \implies  (\mathbf{M}_{r(\x_t)}, \g) \in \partial \psi_\tF(\x_t, \y_t), \label{eq: psi hereditary subgrads}
\end{align}
by Lemma \ref{lemma:psi} and~\eqref{eq:subgradFHat}. 

Suppose first that $h_t\in \cH^\val$ was a function value query, and denote by $B^\val$ and $h_*^\val$ the transformation and hereditary query that \textbf{Adv-MI} uses, according to definition \ref{def:hereditary}, giving $B^\val(h_*^\val(v)) = h_t(v+\delta)$ for all $v\in \R$. For every collection $\tF = (f_{\bar{\x}})_{\bar{\x}}$ of surviving functions $f_{\bar{\x}} \in S(\bar{\x})$, by the consistency guarantee of \textbf{Adv-Cont+} (Item 1 of Lemma \ref{lemma:advContPlus}), the function $f_{r(\x_t)}$ selected for the fiber $r(\x_t)$ has response $h_*^\val(f_{r(\x_t)}(\y_t)) = \alpha$. Then, from the definition of hereditary queries and \eqref{eq: psi hereditary values}, we have $B^\val(h_*^\val(f_{r(\x_t)}(\y_t))) = h_t(f_{r(\x_t)}(\y_t)+ \delta) = h_t(\psi_{\tF}(\x_t, \y_t))$, and so $\psi_\tF$ is indeed consistent with the response $B^\val(\alpha)$ provided by \textbf{Adv-MI}.

If instead $h \in \cH^\sub$ was a subgradient query, again denote $B^\sub$ and $h_*^\sub$ as the appropriate transformation and hereditary query, with $B^\sub(h_*^\sub(\g)) = h(\mathcal{M}(\x), \g)$ for all $\g \in \R^d$. Again, for every choice $\tF$ of surviving functions on the fibers, the function $f_{r(\x_t)}$ on $r(\x_t)$ has $h_*^\sub(\g) = \alpha$, for some $\g\in \partial f_{r(\x_t)}(\y_t)$. 
Then, from the definition of hereditary queries and \eqref{eq: psi hereditary subgrads}, $B^\sub(h_*^\sub(\g)) = h_t(\mathbf{M}_{r(\x_t)}, \g) = h_t(\g_\psi)$, with $\g_\psi\in \partial \psi_{\tF}(\x_t, \y_t)$. Hence, whether $h_t$ is a function value or subgradient query, all functions $\psi_\tF$ for choices $\tF$ of the surviving functions on the fibers are consistent with the responses given by \textbf{Adv-MI}, for the oracle $\cO(\cG,\cH)$ with permissible queries $\cH$ and the first-order chart $\cG$ from Invariant~\ref{inv:advMI general}.

We now prove that \textbf{Adv-MI} is $\eps$-hard for $2^{n-1} \ell - 1$ rounds, thus proving Theorem \ref{thm: transfer theorem pure optimization} in the general case. 
Since \textbf{Adv-MI} makes at most $2$ queries to \textbf{Adv-Cont+} in every round (Step 2) of Procedure~\ref{proc:advMI_general}, if the optimization algorithm runs for fewer than $2^{n-1} \cdot \ell$ iterations, there is a fiber $\x^* \in \{0,1\}$ where \textbf{Adv-MI} sent at most $\ell-1$ hereditary queries to the adversary \textbf{Adv-Cont+} of the fiber $\x^*$. 
Thus, by the guarantee of the latter (Item 2 of Lemma \ref{lemma:advContPlus}), the surviving set $S(\x^*)$ has some finite collection of functions $f^1_{\x^*}, ..., f^k_{\x^*}$ with no common $\eps$-approximate solution, and the remainder of the proof follows as in the full-information case, by considering $\psi_{\tF^1}, ..., \psi_{\tF^k}$ that have $f^1_{\x^*}, ..., f^k_{\x^*}$ on the fiber $\x^*$, and some functions with optimal value greater than $OPT + \eps$ on all the other fibers.

\section{Proof of Theorem~\ref{thm:binary-LB}}\label{sec:information-mem}

To demonstrate Theorem~\ref{thm:binary-LB}, we need to introduce the idea of {\em information memory} of any query strategy/algorithm.
\medskip

\begin{definition}
A first-order query strategy with {\em information memory} comprises  three functions:

\begin{enumerate}
    \item $\phi_{\query}: \{0,1\}^* \to [-R,R]^n \times [-R,R]^d$
    \item $\phi^{\sepp}_{\update}: \left(\R^n \times \R^d\right) \times\{0,1\}^* \to \{0,1\}^*$
    \item $\phi^{\val}_{\update}:\R  \times\{0,1\}^* \to \{0,1\}^*$
    \item $\phi^{\sub}_{\update}:\left(\R^n \times \R^d\right) \times\{0,1\}^* \to \{0,1\}^*$,
\end{enumerate}
where $\{0,1\}^*$ denotes the set of all binary strings (finite sequences over $\{0,1\}$), including the empty string.

Given access to a first-order chart $\mathcal{G}$, the query strategy maintains an {\em information memory} $r_k$ at every iteration $k \geq 0$, which is a finite length binary string in $\{0,1\}^*$, with $r_0$ initialized as the empty string. At every iteration $k = 1,2,\ldots$, the query strategy computes $\z_k := \phi_{query}(r_{k-1})$ and updates its memory using either $r_k = \phi^{\sepp}_{\update}\left(\g^{\sepp}_{\z_k}(\widehat f, \widehat C), r_{k-1}\right)$, $r_k = \phi^{\val}_{\update}\left(\g^{\val}_{\z_k}(\widehat f, \widehat C),r_{k-1}\right)$ or $r_k = \phi^{\sub}_{\update}\left(\g^{\sub}_{\z_k}(\widehat f, \widehat C),r_{k-1}\right)$, where $(\widehat f, \widehat C)$ is the unknown true instance. After finitely many iterations, the query strategy does a final computation based on its information memory and reports an $\epsilon$-approximate solution, i.e., there is a final function $\phi_{\textup{fin}}:\{0,1\}^* \to \Z^n\times \R^d$.

The {\em information memory complexity} of an algorithm for an instance is the maximum length of its information memory $r_k$ over all iterations $k$ during the processing of this instance.

\end{definition}

The following proposition allows us to relate the information memory complexity of first-order algorithms with information complexity under access to a general binary oracle using first-order information. 

\begin{prop}\label{prop:reduction}
Let $\mathcal{G}$ be a first-order chart. For any first-order query strategy $\mathcal{A}$ with information memory that uses $\mathcal{G}$, there exists a query strategy $\mathcal{A}'$ using the general binary oracle based on $\mathcal{G}$, such that for any instance $(f,C)$, if $\mathcal{A}$ stops after $T$ iterations with information memory complexity $Q$, $\mathcal{A}'$ stops after making at most $Q\cdot T$ oracle queries.

Conversely, for any query strategy $\mathcal{A}'$ using the general binary oracle based on $\mathcal{G}$, there exists a first-order query strategy $\mathcal{A}$ with information memory such that for any instance $(f,C)$, if $\mathcal{A}'$ stops after $T$ iterations, $\mathcal{A}$ stops after making at most $T$ iterations with information memory complexity at most $T$.
\end{prop}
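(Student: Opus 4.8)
The plan is to prove both directions by explicit round-by-round simulation, exploiting the fact that the general binary oracle based on $\mathcal{G}$ admits \emph{any} $\{0,1\}$-valued function of a first-order vector as a legal query.

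\textbf{From information memory to binary queries.} Given $\mathcal{A} = (\phi_{\query}, \phi^{\sepp}_{\update}, \phi^{\val}_{\update}, \phi^{\sub}_{\update}, \phi_{\textup{fin}})$, I would construct $\mathcal{A}'$ so that, as part of its transcript, it always carries a reconstructed copy of $\mathcal{A}$'s current information memory. We may assume (folding it into the strategy) that whether $\mathcal{A}$ stops at step $k$, and which of the three update maps it applies at step $k$, are themselves functions of the current memory $r_{k-1}$; the model is otherwise underspecified. At the start of simulated iteration $k$, $\mathcal{A}'$ holds $r_{k-1}$, so it computes $\z_k = \phi_{\query}(r_{k-1})$ and identifies the relevant update map, say $\phi^{\sub}_{\update}$, so that $r_k = \phi^{\sub}_{\update}\big(\g^{\sub}_{\z_k}(\widehat f, \widehat C),\, r_{k-1}\big)$. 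The key observation is that since $r_{k-1}$ is fixed and known, $\g \mapsto \phi^{\sub}_{\update}(\g, r_{k-1})$ is a fixed function, and therefore for each index $i$ the map $\g \mapsto \big[\phi^{\sub}_{\update}(\g, r_{k-1})\big]_i$ (the $i$-th bit, with a fixed convention past the end of the string) is a legitimate binary query, which $\mathcal{A}'$ poses at the point $\z_k$. Reading these bits recovers $r_k$. To detect where $r_k$ ends, one normalizes $\mathcal{A}$ in advance so its memory strings are stored self-delimiting (prefix-free), which we absorb into $Q$; then recovering $r_k$ costs at most $|r_k| \le Q$ queries. Over $T$ iterations this is at most $Q \cdot T$ queries, and $\mathcal{A}'$ outputs $\phi_{\textup{fin}}(r_T)$, the same point $\mathcal{A}$ reports.

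\textbf{From binary queries to information memory.} Conversely, given a query strategy $\mathcal{A}'$ using the general binary oracle based on $\mathcal{G}$, I would let the information memory of $\mathcal{A}$ after $k$ iterations be exactly the string $r_k = b_1 b_2 \cdots b_k \in \{0,1\}^k$ of the $k$ oracle answers received so far. Since $\mathcal{A}'$ is a fixed strategy, $b_1,\ldots,b_{k-1}$ together with $\mathcal{A}'$ determine $\mathcal{A}'$'s $k$-th query $(\z_k, h_k)$, whether $\mathcal{A}'$ stops after $k$ rounds, and ultimately its final output. So I define $\phi_{\query}(r_{k-1})$ to reconstruct $(\z_k, h_k)$ and return $\z_k$; I route iteration $k$ to $\phi^{\sepp}_{\update}$, $\phi^{\val}_{\update}$, or $\phi^{\sub}_{\update}$ according to whether $h_k$ is a separation, value, or subgradient query; and that update map simply appends the bit obtained by applying $h_k$ to the first-order vector it receives, i.e.\ $r_k = r_{k-1}\, h_k(\cdot)$. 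Finally $\phi_{\textup{fin}}(r_T)$ outputs $\mathcal{A}'$'s answer as computed from $r_T$. Then $\mathcal{A}$ runs for exactly $T$ iterations and its memory has length $k \le T$ throughout, so its information memory complexity is at most $T$.

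\textbf{Main obstacle.} The forward direction carries essentially all the work: one must notice that each bit of the \emph{updated} memory is a genuine binary function of the current first-order vector — which holds precisely because $\phi_{\update}$ takes the vector and the old memory as separate arguments — and one must handle the length bookkeeping so that $\mathcal{A}'$ knows when it has finished reading $r_k$, which I resolve by a fixed self-delimiting re-encoding of the memory. The converse is essentially a matter of unwinding definitions.
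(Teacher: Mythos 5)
Your proof is correct and follows essentially the same simulation argument as the paper: in the forward direction, for each of $\mathcal{A}$'s iterations, issue binary queries $h(\cdot) = \bigl[\phi_{\update}(\cdot, r_{k-1})\bigr]_i$ at $\z_k = \phi_{\query}(r_{k-1})$ to recover $r_k$ bit by bit, and in the converse, maintain the concatenation of the binary responses as the information memory. You are somewhat more careful than the paper's terse proof about the length-bookkeeping (self-delimiting encoding) and about which-update-map-to-apply being determined by the memory, but these are minor normalizations the paper leaves implicit, not a different route.
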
 

\begin{proof}
Let $\mathcal{A}$ be a first-order query strategy with information memory. We can simulate $\mathcal{A}$ by the query strategy whose queries are precisely the bits of the information memory state $r_k$ at each iteration $k$ of $\mathcal{A}$. 
More formally, the query is $\z= \phi_{\query}(r_{k-1})$ and $h(\cdot) = (\phi^{\sepp}_{\textrm{update}}(\cdot, r_{k-1}))_i$, $h(\cdot) = (\phi^{\val}_{\textrm{update}}(\cdot, r_{k-1}))_i$, or $h(\cdot) = (\phi^{\sub}_{\textrm{update}}(\cdot, r_{k-1}))_i$, depending on which type of query was made, where $i$ indexes different bits of the corresponding binary string.

Conversely, given a query strategy $\mathcal{A}'$ based on the general binary oracle, we can simulate it with a first-order query strategy with information memory where in each iteration, we simply append the new bit queried by $\mathcal{A}'$ to the current state of the memory.
\end{proof}

We need the following result derived from~Marsden et al. \cite{marsden2022efficient} on information memory complexity. 
\medskip

\begin{theorem}\label{thm:mem-constrained} \textup{\cite[Theorem 1]{marsden2022efficient}} For every $\delta \in [0,1/4]$, there is a class of instances $\mathcal{I}\subseteq \mathcal{I}_{n,d,R,\rho,M}$, where $n=0$, and a first-order chart $\mathcal{G}$ such that any first-order query strategy with information memory must have either $d^{1.25 - \delta}$ information memory complexity (in the worst case) or make at least $\tilde\Omega(d^{1+\frac{4}{3}\delta})$ iterations (in the worst case). 
\end{theorem}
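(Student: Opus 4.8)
I would follow the architecture of Marsden--Sharan--Sidford--Valiant, which is the cited source, and which proves exactly a memory--query tradeoff for (continuous) convex optimization over a ball. There are three ingredients: (i) a randomized family of hard $M$-Lipschitz convex functions, which after rescaling sits inside $\I_{0,d,R,\rho,M}$ for a target accuracy $\epsilon$ chosen as a fixed inverse polynomial in $d$; (ii) a reduction showing that any first-order query strategy with information memory that $\epsilon$-optimizes a random function from this family can be simulated by a player for an auxiliary combinatorial ``Orthogonal Vector Game'' using essentially the same memory budget and number of rounds; and (iii) an information-theoretic lower bound for that game. Tuning the game parameters against the desired exponents yields, for each $\delta\in[0,1/4]$, the dichotomy between information memory complexity $d^{1.25-\delta}$ and $\tilde\Omega(d^{1+\frac43\delta})$ iterations. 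To land inside the parameterization of Definition~\ref{def:MICO-param}: take $C=[-R,R]^d$ (so conditions (i) and (ii) there are satisfied as long as the minimizer is placed not too close to $\partial C$), and scale $F_A$ so it is $M$-Lipschitz in $\ell_\infty$.

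\textbf{The hard family.} Sample a random matrix $A\in\{-1,+1\}^{\lceil d/2\rceil\times d}$ with i.i.d.\ entries and i.i.d.\ random unit ``probe'' vectors $v_1,\dots,v_N\in\R^d$ with $N=\Theta(d)$, and (working first over the Euclidean unit ball) set
\[
 F_A(x)=\max\Big\{\ \|Ax\|_\infty-\gamma\ ,\ \ \eta\cdot\max_{i\in[N]}\big(\langle v_i,x\rangle - i\beta\big)\ \Big\},
\]
for small scalars $\gamma,\eta,\beta$ polynomial in $1/d$. The design forces two things. First, any $\epsilon$-minimizer $\hat x$ must make $\|A\hat x\|_\infty$ small, i.e.\ $\hat x$ lies approximately in the kernel of \emph{every} row of $A$. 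Second, the ``staircase'' term $\eta\max_i(\langle v_i,x\rangle-i\beta)$ forces the iterates of any optimizer through a nested sequence of faces, so the only way to decrease $F_A$ is to repeatedly query points that are approximately orthogonal to the rows of $A$ already revealed; each such query returns a subgradient that is (a scaling of) a fresh row $\pm e_j^\top A$, revealing one new row. Consequently ``$\epsilon$-optimize $F_A$'' is, up to constants, equivalent to ``peel off $\Theta(d)$ rows of $A$ and exhibit a large robustly-independent family of near-kernel vectors,'' and it is this that a memory-bounded algorithm cannot do quickly.

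\textbf{Reduction to the Orthogonal Vector Game.} The game: an adversary holds the random $A$; a player with an $M$-bit state plays $m$ rounds, in each round producing from its state a unit vector approximately orthogonal to all rows returned so far, and receiving in return a fresh (adversarially chosen) row of $A$; at the end the player must output $k$ vectors that are pairwise near-orthogonal of unit norm (``robustly independent'') and each approximately orthogonal to all rows returned during the game. A first-order query strategy with information memory running for $T$ iterations with memory complexity $Q$ on $F_A$ yields a player for this game: run the optimizer, answer its oracle queries with the subgradients of $F_A$ exactly as the chart $\cG$ would, read off its query points as the player's submitted vectors and its progress as the produced near-kernel directions; one gets $m=O(T)$, $M=O(Q)$, and the number $k$ of near-kernel vectors that must be produced per ``epoch'' of the game is dictated by how many rows $F_A$ forces to be peeled (a $\Theta(d)$-row budget, partitioned into epochs of size $k$). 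The only care needed here is calibrating the scalars $\gamma,\eta,\beta,\epsilon$ so this simulation is lossless.

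\textbf{The game lower bound --- the main obstacle.} One must show the Orthogonal Vector Game cannot be won unless $M\gtrsim k(d-m)$ (up to constants and polylogarithmic factors absorbing the robustness/approximation slack). This is a compression/entropy argument: condition on the transcript of submitted vectors and returned rows; the $k$ output vectors, being robustly independent and near-orthogonal to the $m$ returned rows, pin down $\gtrsim k(d-m)$ bits of information about the \emph{unrevealed} rows of $A$ (which remain essentially uniform given the transcript), yet all the player's knowledge of those rows is funneled through its $M$-bit state --- crucially, the returned rows themselves cannot substitute, because the outputs are required to be orthogonal to them, and because the player's submitted vectors are (by the rules) near-orthogonal to previously returned rows, which limits how much each round's row leaks about the part of $A$ relevant to future progress. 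Balancing the two sides gives $M\gtrsim k(d-m)$; combining this per-epoch bound with amortization over the $\Theta(d/k)$ epochs needed to exhaust the row budget, and then optimizing the free parameters $k$ (and, implicitly, $N$ and $\epsilon$), converts ``information memory complexity $<d^{1.25-\delta}$'' into a total iteration lower bound of $\tilde\Omega(d^{1+\frac43\delta})$. The delicate points are exactly this conditioning/leakage estimate and the exponent bookkeeping between $\delta$ and the epoch size; I would reproduce the potential-function (``information odometer'') accounting of Marsden et al.\ for both, as this is the genuine technical heart of the result.
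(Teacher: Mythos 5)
This statement is not proved in the paper at all: it is imported verbatim (with a citation) from Marsden et al.~\cite{marsden2022efficient}, with the only paper-specific content being the routine observation that the hard continuous instances can be phrased inside $\I_{0,d,R,\rho,M}$ and that the first-order answers define a chart $\cG$, which your sketch also handles. Your proposal is essentially a faithful reconstruction of the cited source's argument (hard family mixing $\|Ax\|_\infty$ with a Nemirovski-style staircase, reduction to the Orthogonal Vector Game, and the information-theoretic memory lower bound for that game), so it follows the same route as the only existing proof rather than offering a new one.
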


\begin{proof}[Proof of Theorem~\ref{thm:binary-LB}] In the case when $n = 0$, we can set $\delta = \frac{3}{28}$ in Theorem~\ref{thm:mem-constrained} to obtain that any first-order query strategy uses either $d^{8/7}$ information memory or makes at least $\tilde\Omega(d^{8/7})$ iterations. Using the second part of Proposition~\ref{prop:reduction}, we obtain the lower bound of $\tilde\Omega(d^{8/7})$ on the number of queries made by any query strategy using the general binary oracle based on $\mathcal{G}$.

Applying Theorem \ref{thm: transfer theorem pure optimization} enables us to extend the bound to the mixed-integer scenario ($n>0$). Further, by integrating this with Corollary \ref{cor:standard-LB}, we can obtain the desired bound.
\end{proof}


\section{Proof of Theorems \ref{thm:binary-UB-mixed} and \ref{thm:binary-UB-cont}}

We will use $B_\infty(\p,\delta)$ to denote the $\ell_\infty$ ball of radius $\delta$ centered at $\p \in \R^n \times \R^d$, i.e., $B_\infty(\p,\delta) = \{\z \in \R^n \times \R^d: \|\z - \p\|_\infty \leq \delta\}$. Recall that we consider the subclass of instances $(f,C) \in \I_{n,d,R,\rho,M}$ such that the fiber containing the optimal solution also contains a point that is {\em $\rho$-deep in $C$}, that is: if $(\x^*,\y^*) \in \Z^n \times \R^d$ is an optimal solution for this instance, then there is a point  $(\x^*, \bar{\y})$ such that the full-dimensional ball $B_\infty((\x^*, \bar{\y}), \rho)$ is contained in $C$. We use $\I_{n,d,R,\rho,M}^{deep}$ to denote this subclass of instances. We will use $C_{-\rho}:=\{\z\in C:B_\infty(\z,\rho)\subseteq C\}$ to denote the set of all $\rho$-deep points in $C$.

Our strategy for proving Theorems \ref{thm:binary-UB-mixed} and \ref{thm:binary-UB-cont} is to: 1) solve the the problems using approximate subgradients/separating hyperplanes; 2) use bit queries/inner product sign queries to construct such approximations.

For the first item, we use an algorithm designed by Oertel~\cite{oertel2014integer} (see also~\cite{basu2017centerpoints}) 
based on the concept of a \emph{centerpoint}: this is a point in the convex set where every halfspace supported on it  cuts off a significant (mixed-integer) volume of the set. 
The algorithm maintains an outer relaxation $P$ of the feasible region $C$ in every iteration, and repeatedly applies separation or subgradient-based cuts through the centerpoint of $P$. The assumption that the feasible region contains a ball (in the optimal fiber) establishes a volume lower bound that essentially limits the number of iterations of the algorithm. While the original algorithm in~\cite{oertel2014integer,basu2017centerpoints} uses exact separation/subgradient oracles, we show, not surprisingly, that approximate ones suffice. To prove Theorem \ref{thm:binary-UB-cont}, we employ a similar approach. However, due to the continuous nature of the setting, we can obtain a better upper bound compared to Theorem \ref{thm:binary-UB-mixed} by applying a stronger bound on the centerpoints from Gr\"unbaum \cite{Gruenbaum1960}.

The next item is to construct approximate separation/subgradient oracles by making only a limited number of binary queries on the separating hyperplanes and/or subgradients. In case of bit queries $\mathcal{H}^{\textup{bit}}$ this can be easily done by querying enough bits of the latter. The case of inner product sign queries $\mathcal{H}^{\textup{dir}}$, where we can pick a direction $\a$ and ask ``Is $\ip{\a}{\g} \geq 0$?'' for the subgradient or separating hyperplane $\g$, is more interesting. It boils down to approximating the vector $\g$ (subgradient/separating hyperplane) using few such queries.\footnote{This is related to (actively) learning the linear classifier whose normal is given by $\g$~\cite{activeLearnHalf}. These methods can perhaps be adapted to our setting, but we present a different and self-contained statement and proof. See the discussion at the end of Section~\ref{sec: proof of bit approximate}.}
\bigskip

To formalize the first item, we begin by defining three approximate oracles as follows. 

\medskip

	\begin{definition} \label{def:approxO}
		We have the following:
		\begin{itemize}
			\item An \emph{$\eps$-approximate separation oracle} $\hat{\g}^{\sepp}$ is such that 
$\hat{\g}^{\sepp}_{\bar{\z}}(f,C) = \0$ iff $\bar\z$ belongs to $C$, and otherwise the cut $\ip{\hat{\g}^{\sepp}_{\bar{\z}}(f,C)}{\z} \le \ip{\hat{\g}^{\sepp}_{\bar{\z}}(f,C)}{\bar{\z}}$ is valid for all {\em $\eps$-deep points} $\z \in C_{-\eps}$.
		
			\item An \emph{$\eps$-approximate value cut oracle} $\hat{\g}^{\sub}$ is such that 
   for every $\z$ such that $\ip{\hat{\g}^{\sub}_{\bar{\z}}(f, C)}{\z} \geq \ip{\hat{\g}^{\sub}_{\bar{\z}}(f, C)}{\bar{\z}}$, we have $f(\z) \geq f(\bar{\z}) - \eps$. 
			
			\item An \emph{$\eps$-approximate value comparison oracle} is such that for every function $f : [-R,R]^{n+d} \rightarrow [-U,U]$ and every pair of points $\z,\z'$ we obtain the answer to the query ``Is $f(\z) \le f(\z') + \eps$?''.
		\end{itemize}
	\end{definition}

 \medskip

\noindent Then the first item can be formalized as the following. 
\renewcommand{\e}{\varepsilon}

 \medskip

	\begin{theorem} \label{thm:UBOpt}
There exists an algorithm that, for any $M,R >0$, $0 < \e \leq MR$ and $\rho > 0$, can report an $\e$-approximate solution for every instance in $\I_{n,d,R,\rho,M}^{deep}$, using at most $$O\bigg(2^n (n+d) d \log \bigg(\frac{MR}{\min\{\rho,1\}\e} \bigg) \bigg)$$ oracle calls, given access to any $\rho'$-approximate separation oracle, $\e'$-approximate value cut oracle, and $\e'$-approximate value comparison oracle with $\rho' = \frac{\e' \rho}{4 M R}$ and $\e' = \frac{\e}{6}$.

For the continuous setting with $n=0$, the bound can be improved to $$O\bigg( d \log \bigg(\frac{MR}{\min\{\rho,1\}\e} \bigg) \bigg).$$

	\end{theorem}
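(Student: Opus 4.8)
The plan is to re-run the centerpoint-based algorithm of Oertel~\cite{oertel2014integer} and Basu--Oertel~\cite{basu2017centerpoints} essentially unchanged, and to check that none of its guarantees are lost when the exact separation/subgradient oracles are replaced by the $\rho'$-approximate separation oracle and $\e'$-approximate value cut oracle of Definition~\ref{def:approxO}, as long as we couple the accuracy parameters as $\rho' = \tfrac{\e'\rho}{4MR}$ and $\e' = \tfrac\e6$. Recall that the algorithm maintains a polyhedral outer relaxation $P_k \supseteq C$ with $P_0 = [-R,R]^{n+d}$; in iteration $k$ it computes a (mixed-integer) centerpoint $c_k$ of $P_k$, queries it, and adds to $P_k$ the halfspace $\{\z:\ip{g_k}{\z}\le\ip{g_k}{c_k}\}$ returned --- a separating cut when $c_k\notin C$, or a value cut of the same form when $c_k$ is feasible --- to obtain $P_{k+1}$. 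In parallel, whenever $c_k$ is feasible we compare it against the best feasible point recorded so far using one $\e'$-approximate value comparison query and keep the better one; this recorded point is what we eventually output, so the comparison queries only add a constant factor to the total.

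For correctness I would exhibit a small ball near the optimum that survives every cut the algorithm can possibly make, as long as it has not yet recorded a feasible point of value at most $\OPT+\e$. If $(\x^*,\y^*)$ is optimal and $(\x^*,\bar\y)$ is $\rho$-deep in $C$ (so $B_\infty((\x^*,\bar\y),\rho)\subseteq C$), put $\lambda = \tfrac{\e'}{2MR}$ and $\z_\lambda = (\x^*,\lambda\bar\y+(1-\lambda)\y^*)$. Convexity of $C$ gives $B_\infty(\z_\lambda,\lambda\rho)\subseteq C$, and since $f$ is $M$-Lipschitz on the optimal fiber with $\|\bar\y-\y^*\|_\infty\le 2R$, one gets $f(\z_\lambda)\le\OPT+\e'$ and then $f\le\OPT+2\e'$ on the concentric ball $B:=B_\infty(\z_\lambda,\rho')$ (here $\rho'=\lambda\rho/2<\lambda\rho$, so $B$ has positive radius and every point of $B$ is $\rho'$-deep in $C$). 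Thus no $\rho'$-approximate separating cut removes a point of $B$; and if no feasible centerpoint queried so far has value at most $\OPT+\e=\OPT+6\e'$, then any value cut, which only discards points with $f\ge f(c_k)-\e'>\OPT+5\e'$, also spares $B$. Hence $B\subseteq P_k$ for all $k$, and since $\rho'<1$ the ball meets only the fiber $\x=\x^*$, giving $\mathrm{vol}_{n,d}(P_k)\ge(2\rho')^d$ in every iteration.

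For the iteration bound I would invoke the centerpoint guarantee: a cut through the centerpoint $c_k$ removes at least a fraction $\gamma_{n,d}=\Omega\big(\tfrac1{2^n(d+1)}\big)$ of the mixed-integer volume of $P_k$~\cite{oertel2014integer,basu2017centerpoints}, so $\mathrm{vol}_{n,d}(P_k)\le(1-\gamma_{n,d})^k\,\mathrm{vol}_{n,d}(P_0)$ with $\mathrm{vol}_{n,d}(P_0)\le(2R+1)^n(2R)^d$. Comparing with the lower bound $(2\rho')^d$ from the previous step, after $T=O\big(\gamma_{n,d}^{-1}\,(n+d\log\tfrac{MR}{\min\{\rho,1\}\e})\big)$ iterations the volume is already too small for $B$ to still be present, so by then a feasible centerpoint of value at most $\OPT+\e$ has been recorded, and the maintained best point has value at most $\OPT+\e+O(\e')=\OPT+O(\e)$; rescaling $\e$ by the implied constant yields an honest $\e$-solution. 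Using $\gamma_{n,d}^{-1}=O(2^nd)$ gives the claimed $O\big(2^n(n+d)\,d\log\tfrac{MR}{\min\{\rho,1\}\e}\big)$ bound. For the continuous case $n=0$, the only change is to query the centroid of $P_k$ instead of a centerpoint: by Grünbaum's inequality~\cite{Gruenbaum1960} each such cut removes at least a fraction $1-(\tfrac{d}{d+1})^d\ge 1-\tfrac1e$ of the volume, so $\gamma_{0,d}^{-1}=O(1)$ and the bound improves to $O\big(d\log\tfrac{MR}{\min\{\rho,1\}\e}\big)$.

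The main obstacle I expect is bookkeeping the error budget: one must check that the three sources of slack --- the gap $f(\z_\lambda)-\OPT$ incurred by sliding off the optimal fiber to obtain a full-dimensional deep ball, the $\e'$ slack of the value cut, and the $\e'$ slack of the value comparison --- simultaneously fit inside $\e$ with the prescribed choices $\rho'=\tfrac{\e'\rho}{4MR}$, $\e'=\tfrac\e6$, and that $B$ is genuinely $\rho'$-deep and nonempty. A secondary subtlety is that a centerpoint $c_k$ may be fractional in the integer coordinates, so one must be slightly careful that the point finally reported is a genuine mixed-integer feasible solution; this is handled exactly as in~\cite{oertel2014integer,basu2017centerpoints}, and the approximate oracles do not interfere with it.
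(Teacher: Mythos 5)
Your proposal follows essentially the same route as the paper's own proof: run the Oertel/Basu--Oertel centerpoint algorithm with the approximate oracles, exhibit a small ball around a near-optimal point on the optimal fiber, and use a volume-shrinking argument to bound the number of iterations, with Gr\"unbaum giving the improvement for $n=0$. Your contrapositive framing (``if no good centerpoint has been recorded yet, the ball survives'') is logically equivalent to the paper's direct argument (``after $T$ iterations some point of the ball is cut off, and it must have been by a value cut''), and the parameter choices $\lambda = \tfrac{\e'}{2MR}$, $\rho' = \lambda\rho/2$ coincide with the paper's.

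Two imprecisions are worth fixing, neither of which changes the structure. First, the bound $f \le \OPT + 2\e'$ cannot be asserted on the full $(n+d)$-dimensional ball $B = B_\infty(\z_\lambda,\rho')$: the class $\I_{n,d,R,\rho,M}$ only guarantees $M$-Lipschitzness of $f$ along each \emph{integer} fiber $\{\x\}\times[-R,R]^d$, not at fractional $\x$. Consequently ``$B\subseteq P_k$ for all $k$'' is not justified --- value cuts might legitimately shave off fractional-$\x$ parts of $B$. The paper instead works only with the fiber slice $X := B \cap (\{\x^*\}\times\R^d)$: on $X$ the Lipschitz bound does apply, $X\subseteq C_{-\rho'}$, and $\mu(X) = (2\rho')^d$, which is all the volume argument needs. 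Second, ``since $\rho'<1$'' is not automatic ($\rho' = \tfrac{\e'\rho}{4MR}$ can exceed $1$ when $R$ is large), but this does not matter: the mixed-integer volume of $X$ alone is already $(2\rho')^d$, so $\mu(P_k)\ge(2\rho')^d$ holds regardless of whether $B$ straddles several fibers. (The paper's cleaner bookkeeping compares $\mu(P_T)\le(\min\{\rho',1\})^d$ against $\vol(X)=(2\rho')^d$.) There is also a small sign slip in the Gr\"unbaum step: $(\tfrac{d}{d+1})^d$ decreases to $\tfrac1e$, so the fraction removed is at least $(\tfrac{d}{d+1})^d\ge\tfrac1e$, not $1-(\tfrac{d}{d+1})^d\ge 1-\tfrac1e$; the conclusion $\gamma_{0,d}^{-1}=O(1)$ is unaffected.

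Finally, your error accounting is slightly looser than the paper's: using the threshold $\OPT+6\e'=\OPT+\e$ for ``good centerpoint'' means the reported value after the $\e'$-approximate comparison could be as large as $\OPT+\tfrac{7\e}{6}$, which is why you rightly say a constant rescaling of $\e$ is needed. The paper obtains the tighter bound $f(\z_t)\le\OPT+2\e'+\rho' M\le\OPT+\tfrac\e2$ for the recorded point, so the final answer is at most $\OPT+\tfrac\e2+\tfrac\e6<\OPT+\e$ with the stated $\e'=\tfrac\e6$ and no rescaling.
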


		
  

 \medskip
 
\noindent We postpone the proof of Theorem \ref{thm:UBOpt} to Section \ref{sec: proof of first item}.

\bigskip

The next lemma shows that one can implement the approximate oracles from Definition~\ref{def:approxO} using bit queries and inner product sign queries.
\medskip
\medskip

\begin{lemma}\label{lemma:bitQuery-and-dirDer}
		Consider a first-order chart $\mathcal{G}$. Let $f : \R^{n+d} \rightarrow \R$ be a convex $M$-Lipschitz function taking values in $[-U,U]$, and $C \subseteq [-R,R]^{n+d}$ a convex set. 
		
		Then for every pair of points $\bar{\z}, \bar{\z}' \in [-R,R]^{n+d}$, we can obtain an $\e$-approximate separation oracle vector $\hat{\g}^{\sepp}_{\bar{\z}}(f,C)$, an $\e$-approximate value cut vector $\hat{\g}^{\sub}_{\bar{\z}}(f, C)$, and an $\e$-approximate value comparison between $\bar{\z}$ and $\bar{\z}'$ using either a sequence of bit queries  from $\mathcal{H}^{\textup{bit}}$, or a sequence of inner product sign queries from $\mathcal{H}^{\textup{dir}}$, on the separating hyperplane $\g^{\sepp}_{\bar{\z}}(f,C)$, the subgradient $\g^{\sub}_{\bar{\z}}(f, C)$ and  the function value $\g^{\val}_{\bar{\z}}(f, C)$. The number of required queries to implement the approximate oracles is $O\left((n+d) \log \frac{(n+d) R}{\e}\right)$, $O\left((n+d) \log \frac{(n+d) M R}{\e}\right)$ and $O\left(\log \frac{U}{\e}\right)$ respectively, for both $\mathcal{H}^{\textup{bit}}$ and $\mathcal{H}^{\textup{dir}}$.
	\end{lemma}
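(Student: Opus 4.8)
The plan is to reduce all three approximate oracles of Definition~\ref{def:approxO} to a single primitive: producing, to a precision $\eta$ fixed below, an $\ell_\infty$-approximation $\hat{\g}$ of the object returned by the first-order chart at the query point — the separating hyperplane $\g^{\sepp}_{\bar{\z}}(f,C)$, the subgradient $\g^{\sub}_{\bar{\z}}(f,C)$, or the scalar value $\g^{\val}_{\bar{\z}}(f,C)$. Given such an approximation, one checks (i) that each approximate oracle can be implemented from it, and (ii) that the approximation itself can be computed with the stated number of queries, from either $\mathcal{H}^{\textup{bit}}$ or $\mathcal{H}^{\textup{dir}}$.

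For point (ii), recall the relevant ranges: $\|\g^{\sepp}_{\bar{\z}}\|_2 = 1$ so its coordinates lie in $[-1,1]$, the subgradient of the $M$-Lipschitz $f$ has coordinates in $[-M,M]$, and $\g^{\val}_{\bar{\z}} \in [-U,U]$. With $\mathcal{H}^{\textup{bit}}$ we query, coordinate by coordinate, enough leading bits of the binary representation to pin each coordinate down to additive error $\eta$, i.e.\ $O(\log(\mathrm{range}/\eta))$ bit queries per coordinate. With $\mathcal{H}^{\textup{dir}}$ we instead binary-search each coordinate $\g_i$ using the queries $\mathrm{sgn}(\g_i - c)$ (obtained by taking the direction to be the $i$th coordinate vector, resp.\ $u=1$ for the value query), which after $O(\log(\mathrm{range}/\eta))$ queries localizes $\g_i$ to an interval of width $\eta$; alternatively one may invoke Lemma~\ref{lemma:gradapprox} as a black box. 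We will need $\eta = \Theta\!\big(\tfrac{\e}{(n+d)R}\big)$ for the two vectors and $\eta = \Theta(\e)$ for the value; plugging in the ranges $1$, $M$, $U$ gives exactly the counts $O\big((n+d)\log\tfrac{(n+d)R}{\e}\big)$, $O\big((n+d)\log\tfrac{(n+d)MR}{\e}\big)$ and $O(\log\tfrac U\e)$ claimed.

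For point (i): the value comparison oracle is immediate — estimate $f(\bar{\z})$ and $f(\bar{\z}')$ within $\e/3$ each and answer ``yes'' iff the estimate of $f(\bar{\z})$ exceeds that of $f(\bar{\z}')$ by at most $\e/3$; the triangle inequality shows this is consistent with the $\e$-slack allowed in the definition. For the value cut oracle, write $\g = \g^{\sub}_{\bar{\z}}(f,C) \in \partial f(\bar{\z})$ and let $\hat{\g}$ satisfy $\|\hat{\g} - \g\|_\infty \le \eta$; for any $\z \in [-R,R]^{n+d}$ with $\langle\hat{\g}, \z-\bar{\z}\rangle \ge 0$ the subgradient inequality gives $f(\z) \ge f(\bar{\z}) + \langle\g,\z-\bar{\z}\rangle \ge f(\bar{\z}) + \langle\hat{\g},\z-\bar{\z}\rangle - \|\g-\hat{\g}\|_1\|\z-\bar{\z}\|_\infty \ge f(\bar{\z}) - 2R(n+d)\eta$, so $\eta \le \tfrac{\e}{2(n+d)R}$ makes $\hat{\g}$ a valid $\e$-approximate value cut vector (and if all queries indicate $\hat{\g} = \0$ then $\|\g\|_1 \le (n+d)\eta \le \tfrac{\e}{2R}$, whence $f(\z) \ge f(\bar{\z}) - \e$ for every $\z$ in the box, so outputting $\0$ is still valid). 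For the separation oracle, if $\bar{\z} \in C$ the chart returns $\0$ and all queries return $0$, so we output $\0$; otherwise $\g := \g^{\sepp}_{\bar{\z}}(f,C)$ has $\|\g\|_2 = 1$ and $\langle\g,\z'\rangle < \langle\g,\bar{\z}\rangle$ for all $\z' \in C$, so for $\z \in C_{-\e}$ (where $B_\infty(\z,\e) \subseteq C$) we get $\langle\g,\z\rangle + \e\|\g\|_1 \le \langle\g,\bar{\z}\rangle$, and since $\|\g\|_1 \ge \|\g\|_2 = 1$ this yields the margin $\langle\g,\z\rangle \le \langle\g,\bar{\z}\rangle - \e$. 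Then $\langle\hat{\g},\z\rangle \le \langle\hat{\g},\bar{\z}\rangle - \e + 2R(n+d)\eta$, so $\eta \le \tfrac{\e}{2(n+d)R}$ keeps the cut $\langle\hat{\g},\z\rangle \le \langle\hat{\g},\bar{\z}\rangle$ valid for all $\e$-deep points; shrinking $\eta$ further by an amount affecting the logarithm only by a constant (to $O(1/\sqrt{n+d})$) also forces $\hat{\g} \ne \0$ whenever $\g \ne \0$, so $\hat{\g} = \0$ exactly when $\bar{\z} \in C$.

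The only step with genuine content is the separation-oracle argument: one must guarantee that perturbing the hyperplane does not destroy validity against the $\e$-deep points of $C$. What makes this go through is that the first-order chart returns a \emph{normalized} separating vector, so $\e$-deepness buys a separation margin of at least $\e$, which comfortably absorbs the $O((n+d)R\eta)$ perturbation error once $\eta$ is set as above; everything else is routine error propagation together with bitwise readout or coordinatewise binary search.
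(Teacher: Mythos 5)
Your proof is correct and shares the paper's overall structure: approximate the chart's output to an $\ell_\infty$-precision $\eta = \Theta(\e/((n+d)R))$, then push the error through the definitions via H\"older/triangle inequality, using the $\e$-margin $\langle\g,\z\rangle \le \langle\g,\bar\z\rangle - \e$ for $\e$-deep $\z$ (which follows from $\|\g\|_1 \ge \|\g\|_2 = 1$). The bit-query implementation and the error-propagation inequalities match the paper's. Where you genuinely diverge is the $\mathcal{H}^{\textup{dir}}$ case: you binary-search each coordinate $\g_i$ by choosing $\u = \e_i$ and varying the threshold $c$ in $\mathrm{sgn}(\langle\u,\g\rangle - c)$. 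This is legitimate — $\mathcal{H}^{\textup{dir}}$ explicitly permits arbitrary $c\in\R$, and the coordinates have known ranges ($1$, $M$, $U$) — and it yields the same query counts. The paper instead proves and invokes Lemma~\ref{lemma:gradapprox}, a cone-bisection and subspace-recursion argument that recovers the direction $\g/\|\g\|$ using only $c=0$ queries. Your shortcut is simpler and self-contained but spends the full strength of $\mathcal{H}^{\textup{dir}}$; the paper's route is more work but establishes a stronger primitive (approximating a direction from pure sign-of-inner-product queries, the active-learning-of-halfspaces setting) that survives in a weaker query model. You also note, more explicitly than the paper, that $\eta$ must also be $O(1/\sqrt{n+d})$ so the readout distinguishes $\g = \0$ (i.e., $\bar\z\in C$) from a unit-norm separating vector — a point the paper's proof leaves implicit.
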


 \noindent The proof of Lemma \ref{lemma:bitQuery-and-dirDer} is deferred to Section \ref{sec: proof of bit approximate}.
\bigskip

\begin{proof}Theorems \ref{thm:binary-UB-mixed} and \ref{thm:binary-UB-cont} follow from Theorem \ref{thm:UBOpt} and Lemma \ref{lemma:bitQuery-and-dirDer}.
 \end{proof}

\subsection{Proof of Theorem \ref{thm:UBOpt}}\label{sec: proof of first item}
We first describe the centerpoint algorithm for  convex optimization due to Oertel~\cite{oertel2014integer} (see also~\cite{basu2017centerpoints}).  Let the \emph{mixed-integer volume} of a (Borel) set $U \in \R^{n+d}$ be $\mu(U) := \sum_{\x \in \Z^n} \vol_d(U \cap (\{\x\} \times \R^d))$, where $\vol_d$ is the $d$-dimensional Lebesgue measure. The following notion is the main element of the algorithm.
	\medskip
 
	\begin{theorem}[Mixed-integer centerpoint~\cite{oertel2014integer,basu2017centerpoints}] \label{lemma:centerpoint}
		For any compact convex set $C \subseteq \R^{n+d}$, there is a point $\z \in C \cap (\Z^n \times \R^d)$ (called a \emph{mixed-integer centerpoint}) such that for every halfspace $H$ with $\z$ on its boundary, we have $\mu(C \cap H) \geq \frac{1}{2^n (d+1)}\, \mu(C)$. 
	\end{theorem}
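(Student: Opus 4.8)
The plan is to recognize that the constant $\tfrac{1}{2^n(d+1)}$ is precisely $\tfrac1h$ for $h:=2^n(d+1)$, the \emph{Helly number} of the mixed-integer lattice $\Z^n\times\R^d$: by the Doignon--Bell--Scarf theorem together with its mixed-integer extension (Hoffman), any finite family of convex subsets of $\R^{n+d}$, every $2^n(d+1)$ of which have a common point in $\Z^n\times\R^d$, has a common point in $\Z^n\times\R^d$. I would then run the classical ``Helly number $\Rightarrow$ centerpoint'' argument, with this mixed-integer Helly theorem in place of ordinary Helly. The one place where mixed-integrality intervenes is in turning ``an intersection has positive measure'' into ``an intersection contains a lattice point'': this works here because the mixed-integer volume $\mu$ is, by definition, supported on $\Z^n\times\R^d$ (indeed on $C$).

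Concretely, I would set $V:=\mu(C)$ (assume $V>0$, else the statement is vacuous) and fix a closed ball $B\supseteq C$, so that $\mu$ lives inside $B$ and $\mu(H)=\mu(H\cap B)$ for every closed halfspace $H$. Let $\mathcal H$ be the family of closed halfspaces $H$ with $\mu(H)>V(1-\tfrac1h)$. The heart of the argument is the claim that $\bigcap_{H\in\mathcal H}H$ contains a lattice point. For any $H_1,\dots,H_h\in\mathcal H$, a union bound gives $\mu\big(\bigcap_i H_i\big)\ge V-\sum_i\mu(\R^{n+d}\setminus H_i)>V-h\cdot\tfrac Vh=0$, and since $\mu$ is supported on $(\Z^n\times\R^d)\cap B$, the compact convex sets $H_1\cap B,\dots,H_h\cap B$ have a common point in $\Z^n\times\R^d$. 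As only finitely many lattice points lie in $B$, if no single such lattice point belonged to every $H\in\mathcal H$ I could choose, for each lattice point $\z_0\in B$, a halfspace $H_{\z_0}\in\mathcal H$ missing it; applying the mixed-integer Helly theorem to the finite family $\{H_{\z_0}\cap B\}$ would then produce a lattice point in $B$ lying in all of them, contradicting the choice of the $H_{\z_0}$. Hence there is $\z\in(\Z^n\times\R^d)\cap\bigcap_{H\in\mathcal H}H$.

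It then remains to check that $\z$ has the centerpoint property. Given a closed halfspace $H'=\{\langle a,w\rangle\le c\}$ with $\z$ on its boundary ($\langle a,\z\rangle=c$), suppose $\mu(H')<\tfrac Vh$. By continuity of $\mu$ from above, $\mu(\{\langle a,w\rangle<c+\delta\})\to\mu(H')<\tfrac Vh$ as $\delta\downarrow0$, so for small $\delta>0$ the halfspace $G_\delta:=\{\langle a,w\rangle\ge c+\delta\}$ lies in $\mathcal H$ yet does not contain $\z$, contradicting $\z\in\bigcap_{H\in\mathcal H}H$. Thus $\mu(C\cap H')=\mu(H')\ge\tfrac Vh=\tfrac{1}{2^n(d+1)}\mu(C)$, using that $\mu$ is supported on $C$ for the first equality. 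The same inequality forces $\z\in C$: otherwise strict separation of $\z$ from the compact convex set $C$ yields a closed halfspace through $\z$ that is disjoint from $C$ and hence has $\mu$-measure $0$.

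I expect the only genuine obstacle to be correctly invoking the mixed-integer Helly theorem with Helly number $2^n(d+1)$ (the pure-integer case, Helly number $2^n$, being Doignon--Bell--Scarf, and the mixed case its known refinement); once that is in hand, the remainder is the standard measure-theoretic centerpoint machinery, the only mild subtlety being the reduction of the possibly infinite family $\mathcal H$ to a finite one, which is handled above through the finiteness of the lattice points inside $B$.
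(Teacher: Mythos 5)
The paper does not supply a proof of this theorem; it simply cites Oertel and Basu--Oertel, and the argument in those sources is indeed the one you reconstruct: the Rado-style centerpoint machinery with the ordinary Helly theorem replaced by the mixed-integer Helly theorem of Doignon--Bell--Scarf/Hoffman, whose Helly number is $h=2^n(d+1)$. Your union-bound step, the continuity-from-above argument showing $\z$ is a centerpoint, and the strict-separation argument forcing $\z\in C$ are all correct.

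There is, however, a genuine gap in the reduction from the infinite family $\mathcal H$ to a finite one. You assert that ``only finitely many lattice points lie in $B$,'' but for $d\geq 1$ the set $(\Z^n\times\R^d)\cap B$ is infinite: each integer fiber $\{\x\}\times\R^d$ meets $B$ in a continuum. Consequently, you cannot pick one offending halfspace $H_{\z_0}$ per mixed-integer point $\z_0\in B$ and obtain a finite family. The standard repair is to observe that the sets $K_H := H\cap B\cap(\Z^n\times\R^d)$ for $H\in\mathcal H$ are compact (closed subsets of the compact ball $B$), and then verify the finite intersection property: for any finite subfamily $H_1,\dots,H_m\in\mathcal H$, your union-bound argument shows that every $h$ of them have a common mixed-integer point, and mixed-integer Helly then gives a mixed-integer point in $\bigcap_{i=1}^m H_i$. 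Since a family of compact sets with the finite intersection property has nonempty total intersection, $\bigcap_{H\in\mathcal H}K_H\neq\emptyset$, producing the desired $\z$. (In the pure-integer case $d=0$ your finiteness claim is literally true and the argument as written goes through; the compactness patch is needed only when $d\geq1$.) With this correction, the proof is sound and matches the approach of the cited sources.
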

	
	Algorithm \ref{alg:centerpoints} below is the centerpoint-based algorithm for solving mixed-integer convex optimization problems from \cite{oertel2014integer}, restated in terms of approximate separation ($\hat{\g}^{\sepp}_{\bar{\z}}(f, C)$), value cut ($\hat{\g}^{\sub}_{\bar{\z}}(f, C)$) and value comparison oracles. 	
	
	\begin{algorithm}[!h]
	\caption{Centerpoints}
	\label{alg:centerpoints}
	 \begin{enumerate}[leftmargin=5pt]
	 	\item  Initialize the version set $P_0 := [-R,R]^{n+d}$ and the collection of feasible points $F = \emptyset$. For iterations $t=0,\ldots,T-1$:
	 
	 \begin{enumerate}
	 		\item Let $\z_t \in P_t \cap (\Z^n \times \R^d)$ be a mixed-integer centerpoint of $P_t$ given by Lemma \ref{lemma:centerpoint}.
	 		
	 		\item If the $\rho'$-approximate separation oracle says that $\z_t$ is infeasible for $C$, add the cut $\ip{\hat{\g}^{\sepp}_{\z_t}(f, C)}{\z} \le \ip{\hat{\g}^{\sepp}_{\z_t}(f, C)}{\z_t}$ to $P_t$, namely set $P_{t+1} = P_t \cap \{\z : \ip{\hat{\g}^{\sepp}_{\z_t}(f, C)}{\z} \le \ip{\hat{\g}^{\sepp}_{\z_t}(f, C)}{\z_t}\}$
	 		
	 		\item Else, add $\z_t$ to the set of feasible solutions $F$, and add the cut from the $\e'$-approximate value cut oracle, namely set $P_{t+1} = P_t \cap \{\z : \ip{\hat{\g}^{\sub}_{\z_t}(f, C)}{\z} \le \ip{\hat{\g}^{\sub}_{\z_t}(f, C)}{\z_t}\}$. 
	 \end{enumerate}
	 
	 \item Finally, return a point $\hat{\z}$ from $F$ that has approximately the minimum value among all solutions in $F$, namely such that $f(\hat{\z}) \le \min_{\z \in F} f(\z) + \e'$. This can be accomplished by asking $|F| - 1$ queries to the $\e'$-approximate value comparison oracle.
	 \end{enumerate}
	\end{algorithm}			
	
	To analyze this algorithm we need the following technical lemma regarding deep points in instances in $\I_{n,d,R,\rho,M}^{deep}$. 
	
	\begin{lemma} \label{lemma:optBall}
		For every instance $(f,C)$ in $\I_{n,d,R,\rho,M}^{deep}$, and for every $0 < \e < 2MR$, there is an $\e$-approximate solution $\z$ such that the ball $B_\infty(\z, \frac{\e \rho}{2MR})$ is contained in $C$.
	\end{lemma}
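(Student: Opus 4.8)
The plan is to produce the required point explicitly, as an interpolation on the optimal fiber between the optimal solution and the $\rho$-deep point guaranteed by the definition of $\I_{n,d,R,\rho,M}^{deep}$. Fix an instance $(f,C)$ in this class, let $(\x^*,\y^*)\in\Z^n\times\R^d$ be an optimal solution with value $\OPT$, and let $(\x^*,\bar\y)$ be a point with $B_\infty((\x^*,\bar\y),\rho)\subseteq C$. For $\lambda\in[0,1]$ I set $\z_\lambda:=(\x^*,(1-\lambda)\y^*+\lambda\bar\y)$; the integer part $\x^*$ is unchanged, so $\z_\lambda$ stays in the fiber $\{\x^*\}\times\R^d$ and, as soon as it lies in $C$, it is a feasible mixed-integer point. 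I will then take $\lambda:=\frac{\e}{2MR}$, which lies in $(0,1)$ because $0<\e<2MR$.

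First I would verify the deep-ball property. Given any $\p$ with $\|\p\|_\infty\le\lambda\rho$, write $\z_\lambda+\p=(1-\lambda)(\x^*,\y^*)+\lambda\big((\x^*,\bar\y)+\tfrac1\lambda\p\big)$; since $\|\tfrac1\lambda\p\|_\infty\le\rho$ the second point lies in $B_\infty((\x^*,\bar\y),\rho)\subseteq C$, and $(\x^*,\y^*)\in C$, so convexity of $C$ gives $\z_\lambda+\p\in C$. Hence $B_\infty(\z_\lambda,\lambda\rho)\subseteq C$, and in particular $\z_\lambda\in C$, so $\z_\lambda$ is feasible. With $\lambda=\frac{\e}{2MR}$ the radius of this ball is exactly $\frac{\e\rho}{2MR}$.

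Next I would bound the objective at $\z_\lambda$ using hypothesis (iii): $f$ is $M$-Lipschitz in $\|\cdot\|_\infty$ on the box $\{\x^*\}\times[-R,R]^d$, and $\y^*,\bar\y\in[-R,R]^d$ since both $(\x^*,\y^*)$ and $(\x^*,\bar\y)$ lie in $C\subseteq[-R,R]^{n+d}$. Therefore $f(\z_\lambda)\le f(\x^*,\y^*)+M\lambda\|\bar\y-\y^*\|_\infty\le\OPT+2MR\lambda=\OPT+\e$. As $\OPT$ is the optimal value of the instance, $\z:=\z_\lambda$ satisfies $f(\z)\le f(\z')+\e$ for every feasible $\z'$, i.e.\ it is an $\e$-approximate solution, and by the previous paragraph $B_\infty(\z,\frac{\e\rho}{2MR})\subseteq C$, which is the claim.

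The argument has no serious obstacle; the only step needing a little care is the Lipschitz estimate, because $\|\bar\y-\y^*\|_\infty$ can be as large as $2R$ while the ``in other words'' restatement of hypothesis (iii) is phrased only for pairs of fiber points at $\ell_\infty$-distance at most $R$. I would dispose of this by invoking the first sentence of (iii) directly (Lipschitzness with constant $M$ on the whole box $\{\x^*\}\times[-R,R]^d$), or equivalently by chaining the short-range bound along the segment from $\y^*$ to $\bar\y$, which stays inside the convex box; this makes $f(\z_\lambda)-\OPT\le M\lambda\|\bar\y-\y^*\|_\infty$ valid throughout the allowed range of $\e$.
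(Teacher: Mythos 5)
Your argument is essentially identical to the paper's: same interpolation point $\z_\lambda=(1-\lambda)\z^*+\lambda\bar\z$ with $\lambda=\e/(2MR)$, same convexity argument for the ball inclusion, and same Lipschitz estimate for near-optimality. Your extra remark about the ``$\|\y-\y'\|_\infty\le R$'' clause in Definition~\ref{def:MICO-param}(iii) is a fair observation (the paper silently invokes the unrestricted Lipschitz statement), but your chaining fix disposes of it and does not change the substance of the proof.
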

	
	\begin{proof}
	Let $\z^* = (\x^*,\y^*)$ be an optimal solution for the instance, and let $\bar{\z} = (\x^*, \bar{\y})$ be such that $B_\infty(\bar{\z}, \rho)$ is contained in $C$. For $\alpha = \frac{\e}{2MR}$, we claim that the point $\z := (1-\alpha) \z^* + \alpha \bar{\z}$ has the desired properties. First, by convexity of $C$ we have that the desired ball $B_\infty(\z, \frac{\e \rho}{2MR}) = (1-\alpha) \z^* + \alpha B_\infty(\bar{\z}, \rho)$ is contained in $C$. In addition, since $\z - \z^* = \alpha \cdot (0, \bar{\y} - \y^*)$ and $f$ is $M$-Lipschitz over the integer fibers, we have $$f(\z) \,\le\, f(\z^*) + \alpha M \cdot \|\y^* - \bar{\y}\|_{\infty} \,\le\, f(\z^*) + \e,$$ where the last inequality uses that $\y^*,\bar{\y} \in [-R,R]^d$ and the definition of $\alpha$; so $\z$ is an $\e$-approximate solution. This concludes the proof.  
	\end{proof}

	\begin{proof}[Proof of Theorem \ref{thm:UBOpt}]
%
%
%
%
%
%
%
	 We show that Algorithm \ref{alg:centerpoints} with number of iterations set as  $$T = 2^n (n+d) (d+1) \ln \bigg(\frac{2R}{\min\{\rho',1
  \}}\bigg)\in O\left( 2^n (n+d) d \ln \bigg(\frac{MR}{\min\{\rho,1
  \}\epsilon}\bigg)\right)$$ has the desired properties 
  First, regarding the number of oracle queries performed: in each iteration it performs at most 2 approximate separation/value cut queries, and in Step 2 it performs $|F|-1 \le T$ approximate value comparison queries. In total, the algorithm performs at most $3T$ queries, giving the desired complexity. 
	 
	 Now we show that the algorithm returns an $\e$-optimal solution. For that, it suffices to show that for this value of $T$, the set of feasible solutions $F$ contains an $\frac{\e}{2}$-optimal solution.
 
	 Using Lemma \ref{lemma:optBall}, let $\bar{\z}$ be an $\e'$-approximate solution such that the ball $B_\infty(\bar{\z}, \frac{\e'\rho}{2MR}) = B_\infty(\bar{\z}, 2\rho')$ is contained in $C$. Thus, the ball $B_\infty(\bar{\z}, \rho')$ is contained in $C_{-\rho'}$. Since the cut added to $P_t$, whether in Step (b) or (c), goes through the centerpoint $\z_t$, the mixed-integer volume of $P_t$ is reduced by a factor of at least $(1 - \frac{1}{2^n (d+1)})$ in each iteration (to simplify the notation let $\alpha := \frac{1}{2^n (d+1)}$). The definition of $T$ shows that the last set $P_T$ has mixed-integer volume at most
	  \begin{align}
	  (1-\alpha)^T \mu(P_0) = (1-\alpha)^T (2R)^{n+d} \le e^{-T\alpha}(2R)^{n+d}\leq\left(\min\{\rho',1
  \}\right)^{n+d}\leq \left(\min\{\rho',1
  \}\right)^{d}. \label{eq:volRed}
	  \end{align}
Let $X$ be the intersection of $B_\infty(\bar\z,\rho')$ with the mixed-integer fiber containing $\bar\z$. $X$ has the same structure as an $\ell_\infty$ ball of radius $\rho'$ in $\R^d$, and thus has volume at least $(2\rho')^d$, which is strictly bigger than the right-hand side of \eqref{eq:volRed}. This means that some mixed-integer point from $X$ is cut off by one of the hyperplanes applied by the algorithm. However, such a hyperplane cannot be one added in Step (b), since $B_\infty(\bar{\z}, \rho') \subseteq C_{-\rho'}$ and the cuts in that step are valid for $C_{-\rho'}$. Thus, there is an iteration $t$ that added a Step (c) approximate value cut that cut off a point $\tilde{\z} \in X$. Thus, $\ip{\hat{\g}^{\sub}_{\z_t}(f, C)}{\tilde{\z}} > \ip{\hat{\g}^{\sub}_{\z_t}(f, C)}{\z_t}$. Since this is an $\e'$-approximate value cut, we get that $f(\tilde{\z}) \geq f(\z_t) - \e'$. Since $f$ is $M$-Lipschitz on the fiber containing $\bar\z$ and $\tilde\z$, and the $\ell_\infty$ distance between $\bar{\z}$ and $\tilde{\z}$ is at most $\rho'$, we get
	  \begin{align*}
	  	f(\z_t) \le f(\tilde{\z}) + \e' \le f(\bar{\z}) + \rho' M + \e' \le \OPT + 2 \e' + \rho' M \le \OPT + \frac{\e}{2},
	  \end{align*}
	  where the last inequality uses that $\rho' = \frac{\e' \rho}{4MR} \le \frac{\e'}{M}$ and that $\e' = \frac{\e}{6}$. 
	  
	  This shows that the set of feasible solutions $F$ contains an $\frac{\e}{2}$-approximate solution, namely the above $\z_t$, as desired. This concludes the proof for the mixed-integer setting. 
   
   For the continuous setting with $n=0$, the improved bound follows from using an improved bound on centerpoints due to Grünbaum \cite{Gruenbaum1960}. Specifically, $\alpha$ in the left hand side of (\ref{eq:volRed}) can be taken to be $\frac{1}{e}$, where $e$ is Euler's constant.
	\end{proof}

\subsection{Proof of Lemma \ref{lemma:bitQuery-and-dirDer}}\label{sec: proof of bit approximate}

The proof will be divided into two parts: first, we show how to obtain approximate oracles using bit queries, after which we show how to do the same using inner product sign queries.

\bigskip

\noindent {\bf Obtaining approximate oracles using bit queries.}
{Since $f$ is $M$-Lipschitz with respect to the $\ell_\infty$ norm, any subgradient has $\ell_\infty$ norm at most $M$.}	 Thus, letting $\e' := \frac{\e}{2 (n+d) R}$, we will query the sign and the bits {indexed by the integers} $\lceil\log M\rceil, \lceil\log M\rceil - 1, \ldots, - \lfloor\log \frac{1}{\e'}\rfloor$ of each coordinate of $\g^{\sub}_{\bar{\z}}(f, C)$ (nonnegative integers index the bits before the decimal, and negative integers index the bits after the decimal in the binary representation). This can be done by querying the bits of $\g^{\sub}_{\bar{\z}}(f, C)$ for a total of $(n+d) (\log \frac{M}{\e'} + 2)$ queries -- for each coordinate, one queries $\log M + \log (\frac{1}{\eps})+1$ bits for the desired precision and one additional bit for the sign. This gives a vector $\hat{\g}^{\sub}_{\bar{\z}}(f, C)$
such that $\|\g^{\sub}_{\bar{\z}}(f, C) - \hat{\g}^{\sub}_{\bar{\z}}(f, C)\|_\infty \le \sum_{i > \log \frac{1}{\e'}} \frac{1}{2^i} \le \e'$. 

	Then $\hat{\g}^{\sub}_{\bar{\z}}(f, C)$ is an $\e$-approximate value cut. Let $\g:=\g^{\sub}_{\bar{\z}}(f, C)$ and $\hat{\g} := \hat{\g}^{\sub}_{\bar{\z}}(f, C)$ to simplify notation.  For every $\z \in [-R,R]^{n+d}$ such that $\ip{\hat{\g}}{\z} \geq \ip{\hat{\g}}{\bar{\z}}$ we have by convexity of $f$
	\begin{align}
		f(\z) - f(\bar{\z}) \geq \ip{\g}{\z - \bar{\z}} &= \underbrace{\ip{\hat{\g}}{\z - \bar{\z}}}_{\geq 0} + \ip{\g - \hat{\g}}{\z - \bar{\z}} \notag\\
		&\geq - \|\g - \hat{\g}\|_{\infty} \cdot \|\z - \bar{\z}\|_1 \notag \\
		&\geq - 2 \e' (n+d) R \,=\, - \e, \label{eq:approxCut1}
	\end{align}
	where the second inequality follows from H\"older's inequalty, and so $\hat{\g}$ has the desired property. 
	
	For $\hat{\g}^{\sepp}_{\bar{\z}}(f, C)$, recall that by assumption the separating vector $\g^{\sepp}_{\bar{\z}}(f, C)$ has unit length, and hence $\|\g^{\sepp}_{\bar{\z}}(f, C)\|_{\infty} \le 1$. Then querying the sign and the bits indexed by $0,-1,\ldots, -\log \frac{1}{\e'}$ of each coordinate of $\g^{\sepp}_{\bar{\z}}(f, C)$ we obtain a vector $\hat{\g}^{\sepp}_{\bar{\z}}(f, C)$ such that $\|\g^{\sepp}_{\bar{\z}}(f, C) - \hat{\g}^{\sepp}_{\bar{\z}}(f, C)\|_{\infty} \le \e'$. 
	
	We claim that $\hat{\g}^{\sepp}_{\bar{\z}}(f, C)$ is an $\e$-approximate separation oracle, namely the inequality $\ip{\hat{\g}^{\sepp}_{\bar{\z}}(f, C)}{\z} \le \ip{\hat{\g}^{\sepp}_{\bar{\z}}(f, C)}{\bar{\z}}$ holds for all $\z \in C_{-\e}$. As before, to simplify the notation we use $\g := \g^{\sepp}_{\bar{\z}}(f, C)$ and $\hat{\g} := \hat{\g}^{\sepp}_{\bar{\z}}(f, C)$. For every $\z \in [-R,R]^{n+d}$ we have 
	\begin{align}
		\ip{\hat{\g}}{\z} = \ip{\g}{\z} + \ip{\hat{\g} - \g}{\z} \le \ip{\g}{\z} + \|\hat{\g} - \g\|_{\infty} \cdot \|\z\|_1 \le \ip{\g}{\z} + \e' R (n+d) = \ip{\g}{\z} + \frac{\e}{2}. \label{eq:approxCut11}
	\end{align}
	Now we claim that for every point $\z \in C_{-\e}$ we have $\ip{\g}{\z} \le \ip{\g}{\bar{\z}} - \e$: since the inequality $\ip{\g}{\x} \le \ip{\g}{\bar{\z}}$ is valid for the ball $B(\z, \e) \subseteq C$, we have 
	\begin{align}
	\ip{\g}{\bar{\z}} \geq \max_{\w \in B(0,\e)} \ip{\g}{\z + \w} = \ip{\g}{\z} + \e, \label{eq:approxCut2}
	\end{align}
	proving the claim. Finally, we claim that $\ip{\g}{\bar{\z}} \le \ip{\hat{\g}}{\bar{\z}} + \frac{\e}{2}$: 
	\begin{align}
	\ip{\g}{\bar{\z}} - \ip{\hat{\g}}{\bar{\z}} = \ip{\g - \hat{\g}}{\bar{\z}} \le \|\g - \hat{\g}\|_{\infty} \cdot \|\bar{\z}\|_1 \le \e' R (n+d) = \frac{\e}{2}. \label{eq:approxCut3}
	\end{align}
	Combining inequalities \eqref{eq:approxCut11}-\eqref{eq:approxCut3} proves that the cut $\ip{\hat{\g}}{\z} \le \ip{\hat{\g}}{\bar{\z}}$ is valid for $C_{-\e}$.
	
	To obtain the $\e$-approximate value comparison oracle, since $f$ takes values in $[-U,U]$, it suffices to probe the sign plus $\log \frac{2 U}{\e}$ bits of $f(\bar{\z})$ and $f(\bar{\z}')$ to approximate each of the values within $\pm \frac{\e}{2}$, in which case we can decide whether $f(\bar{\z}) \le f(\bar{\z}') + \e$ or not. This concludes the proof when using bit queries. 


\bigskip

\noindent {\bf Obtaining approximate oracles using inner product sign queries.} This proof largely follows the same steps as for the first part, except for the need for the following result, which may be of independent interest.
\medskip

\begin{lemma}\label{lemma:gradapprox}
	For any $\e \in (0,1)$ and any vector $\g \in \R^d$, using $O(d \log \frac{d}{\e})$ inner product sign queries one can obtain a unit-length vector $\hat{\g} \in \R^d$ such that $\big\|\hat{\g} - \frac{\g}{\|\g\|}\big\| \le \e$. 
\end{lemma}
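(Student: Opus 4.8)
The plan is a three–stage reduction: (i) locate a coordinate of $\g$ of largest absolute value, (ii) binary–search the ratios of the remaining coordinates to that one, and (iii) assemble and renormalize. We may assume $\g \neq \zero$ (otherwise $\g/\|\g\|$ is undefined).

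\emph{Stage 1 (locate a largest coordinate).} First I would find an index $i \in [d]$ with $|g_i| = \|\g\|_\infty$. The key point is that the \emph{absolute values} of two coordinates can be compared using only $O(1)$ sign queries: for $j \neq k$, the queries in directions $\mathbf{e}_j - \mathbf{e}_k$ and $\mathbf{e}_j + \mathbf{e}_k$ (threshold $0$) return $\operatorname{sgn}(g_j - g_k)$ and $\operatorname{sgn}(g_j + g_k)$, and since their product equals $g_j^2 - g_k^2$, these two signs coincide exactly when $|g_j| > |g_k|$, in which case their common value is $\operatorname{sgn}(g_j)$ (and a returned $0$ only pins down a value exactly, which helps). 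Running a single–elimination tournament over the $d$ coordinates therefore uses $2(d-1)$ queries and produces both $i$ and $s := \operatorname{sgn}(g_i)$, breaking ties arbitrarily (a tie winner still attains $\|\g\|_\infty$), with at most one extra sign query for $s$ if needed. Note $|g_i| > 0$.

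\emph{Stage 2 (binary–search the ratios).} For each $j \neq i$ set $r_j := g_j/g_i \in [-1,1]$. I would estimate each $r_j$ to additive error $\delta := \e/(2\sqrt{d})$ by binary search on a threshold $t \in [-1,1]$: the query in direction $\mathbf{e}_j - t\,\mathbf{e}_i$ with threshold $0$ returns $\operatorname{sgn}(g_j - t g_i)$, and knowing $s$ this tells us whether $r_j \ge t$ or $r_j \le t$. This costs $O(\log(1/\delta)) = O(\log(d/\e))$ queries per coordinate, hence $O(d\log(d/\e))$ in total, which dominates the $O(d)$ queries of Stage 1.

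\emph{Stage 3 (assemble and normalize).} Let $\mathbf{r} \in \R^d$ be the vector with $r_i = 1$ and $r_j = g_j/g_i$ for $j \neq i$, so $\g = g_i\,\mathbf{r}$ and hence $\g/\|\g\| = s\,\mathbf{r}/\|\mathbf{r}\|$. Let $\hat{\mathbf{r}}$ be the computed estimate, with $\hat r_i = 1$ exactly and $|\hat r_j - r_j| \le \delta$ otherwise, so $\|\hat{\mathbf{r}} - \mathbf{r}\| \le \sqrt{d}\,\delta$. Output $\hat{\g} := s\,\hat{\mathbf{r}}/\|\hat{\mathbf{r}}\|$, a unit vector. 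Since $\|\mathbf{r}\| \ge |r_i| = 1$ and likewise $\|\hat{\mathbf{r}}\| \ge 1$, the elementary bound $\|a/\|a\| - b/\|b\|\| \le 2\|a-b\|/\max\{\|a\|,\|b\|\}$ gives
\[
\Big\|\hat{\g} - \frac{\g}{\|\g\|}\Big\| \;=\; \Big\|\frac{\hat{\mathbf{r}}}{\|\hat{\mathbf{r}}\|} - \frac{\mathbf{r}}{\|\mathbf{r}\|}\Big\| \;\le\; 2\,\|\hat{\mathbf{r}} - \mathbf{r}\| \;\le\; 2\sqrt{d}\,\delta \;=\; \e ,
\]
as required, at total cost $O(d\log(d/\e))$ queries. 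The step I expect to carry the main idea is Stage 1: one must realize that sign queries — which a priori only reveal signs of linear forms — can nonetheless compare magnitudes of coordinates via the $g_j^2 - g_k^2$ trick, and it is precisely this that forces the ratios in Stage 2 into the bounded range $[-1,1]$ and keeps $\|\mathbf{r}\| \ge 1$ in Stage 3; binary search and the normalization error estimate are then routine.
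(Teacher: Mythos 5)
Your proof is correct, and it follows a genuinely different route from the paper's. The paper argues by induction on the dimension: it fixes a $2$-dimensional subspace $A$, binary-searches the angle of $\Pi_A\g$ within $A$ to get a unit vector $\tilde\g \in A$, then replaces $A$ by the $(d-1)$-dimensional subspace $\mathrm{span}\{\tilde\g, A^\perp\}$ and recurses, carefully controlling how the relative error accumulates across the $d-1$ levels of recursion. Your argument is instead coordinate-based: a tournament to identify a coordinate of maximal magnitude (via the $\mathrm{sgn}(g_j\pm g_k)$ trick, which is a nice observation), then $d-1$ independent one-dimensional binary searches on the ratios $g_j/g_i \in [-1,1]$, and finally a single normalization with the elementary bound $\bigl\|a/\|a\| - b/\|b\|\bigr\| \le 2\|a-b\|/\max\{\|a\|,\|b\|\}$, exploiting that the pivot coordinate forces $\|\mathbf{r}\|, \|\hat{\mathbf{r}}\|\ge 1$. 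Both methods reach $O(d\log(d/\eps))$ queries. Your version is more elementary and avoids the error propagation bookkeeping across nested subspaces that the paper must track; the paper's version is coordinate-free and would carry over verbatim if one only had access to projections onto arbitrary $2$-planes rather than to the standard coordinate directions, but this flexibility is not needed for the lemma as stated. The one thing worth making slightly more explicit in a polished write-up is the tie-handling in Stage 1: a returned $0$ from $\mathrm{sgn}(g_j-g_k)$ or $\mathrm{sgn}(g_j+g_k)$ forces $|g_j|=|g_k|$, so either coordinate may advance, and the pivot still attains $\|\g\|_\infty$; and since $\g\neq \zero$, the pivot satisfies $g_i\neq 0$, so the ratios $r_j$ and the sign $s$ are well defined.
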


\begin{proof}
	We prove by induction on the dimension $d$ that for every $\delta \in (0,2)$, with $d \log \frac{8}{\delta}$ inner product sign  queries we can obtain a vector $\hat{\g}$ such that $\big\|\hat{\g} - \frac{\g}{\|\g\|}\big\| \le 2 d \delta$; the lemma then follows by setting $\delta = \frac{\e}{2d}$. 

	Just one query suffices when $d=1$, so consider the base case $d=2$. Perform a binary search as follows: Start with the cone $K_0 = \R^2$, with corresponding angle $2\pi$. In iteration $t$, we maintain a cone $K_t$ containing $\g$ whose angle is half that of $K_{t-1}$ as follows. For each iteration, find a line $\{\x : \ip{\a}{\x} = 0\}$ that cuts $K_t$ into two cones $K_t^L = K_t \cap \{\x: \ip{\a}{\x} \le 0\}$ and $K_t^R = K_t \cap \{\x: \ip{\a}{\x} \geq 0\}$ each with half the angle of $K_t$, i.e. bisecting $K_t$. Ask the query ``Is $\ip{\a}{\g} \geq 0$?'', and if so set $K_{t+1} = K_t^R$, otherwise set to $K_{t+1} = K_t^L$, and repeat the procedure. By construction all the cones $K_t$ contain $\g$, and after $\log \frac{8}{\delta}$ iterations we obtain a cone $K$ with angle $\frac{\delta \pi}{4}$. Let $\hat{\g}$ be any vector in this cone with unit $\ell_2$-norm. For any other $\x \in K$ also of unit norm, we have
	\begin{align*}
	\|\hat{\g} - \x\|_2^2 = 2 - 2\ip{\hat{\g}}{\x} \le 2 - 2 \cos(\delta \pi /4) \le (\delta \pi /4)^2 \le \delta^2,
	\end{align*}
	where the second inequality uses that fact that $\cos(\theta) \geq 1 - \frac{\theta^2}{2}$ for all $\theta \in (0, \pi/2)$. So $\|\hat{\g} - \x\|_2 \le \delta$ for all unit-norm vectors in $K$, and in particular $\hat{\g}$ gives the desired approximation of $\frac{\g}{\|\g\|_2}$, proving the desired result when $d=2$.
	
	\medskip Now consider the general case $d > 2$. Consider any 2-dimensional subspace $A$ of $\R^d$, and let $\Pi_A$ denote the projection onto this subspace. Using the $2$-dimensional case on the subspace $A$, we see that by using $\log \frac{8}{\delta}$ queries of the form ``Is $\ip{\a}{\Pi_A \g} \geq 0$?'', we can obtain a unit length vector $\tilde{\g} \in A$ such that  $\|\lambda_A \cdot \tilde{\g} - \Pi_A \g\| \le \delta \|\Pi_A \g\|$, where $\lambda_A := \|\Pi_A \g\|$.
	 We note that since $\ip{\a}{\Pi_A \g} = \ip{\Pi_A^* \a}{\g}$, the required queries can be obtained by inner product sign  queries (here $\Pi_A^*$ denotes the adjoint linear operator for the projection operator $\Pi_A$, whose matrix representation is given by the transpose of the matrix representing the projection $\Pi_A$).

	Now consider the $(d-1)$-dimensional subspace $B := \mspan\{\tilde{\g}, A^{\perp}\}$, and notice that $\dist(\g, B) \le \delta \|\g\|$: the vector $\b := \lambda_A \cdot \tilde{\g} + (\g - \Pi_A \g)$ belongs to $B$ and $\|\g - \b\| = \|\lambda_A \cdot \tilde{\g} - \Pi_A \g\| \le \delta \|\Pi_A \g\| \le \delta \|\g\|$. Since $\g$ is close to this subspace, we project it there and recurse on dimension. More precisely, consider the projection $\Pi_B \g$ of $\g$ onto $B$, and inductively obtain a vector $\hat{\g} \in B$ such that $\|\lambda_B \cdot \hat{\g} - \Pi_B \g\|_2 \le 2 (d-1) \delta \cdot \|\Pi_B \g\|$ (letting  $\lambda_B := \|\Pi_B \g\|$), by using additional $(d-1) \log \frac{8}{\delta}$ queries (for a total of $d \log \frac{8}{\delta}$ queries). 
	
	We claim that $\hat{\g}$ is the desired approximation of $\g$, namely $\|\frac{\g}{\|\g\|} - \hat{\g}\|_2 \le 2 d \delta$. To see this, from triangle inequality we have
	\begin{align}
		\big\|\g - \|\g\|\cdot  \hat{\g}\big\| \le  \|\g - \Pi_B \g\| + \|\Pi_B \g - \lambda_B \cdot \hat{\g}\| + \|\lambda_B \cdot \hat{\g} - \|\g\| \cdot \hat{\g}\big\|.   \label{eq:dirDerMain}
	\end{align} 
	The first term of the right-hand side equals $\dist(\g, B)$, which is at most $\delta \|\g\|$ as argued above. For the second term, by induction we have $$\|\Pi_B \g - \lambda_B \cdot \hat{\g}\|_2 \le 2 (d-1) \delta \cdot \|\Pi_B \g\| \le 2 (d-1) \delta \|\g\|.$$  Finally, we claim that the last term of \eqref{eq:dirDerMain} is at most $\delta \|\g\|$: since $\hat{\g}$ has unit norm, it equals $|\lambda_B - \|\g\|| = |\|\Pi_B \g\| - \|\g\|| = \|\g\| - \|\Pi_B \g\|$, and by triangle inequality we have $$\|\g\| \le \|\Pi_B \g\| + \|\g - \Pi_B \g\| \le \|\Pi_B \g\| + \delta \|\g\|,$$ giving the claim. 

	 
	  Applying all these bounds to \eqref{eq:dirDerMain}, we get that $\big\|\g - \|\g\|\cdot  \hat{\g}\big\| \le 2 d \delta \|\g\|$, as desired. This concludes the proof of the lemma.
   \end{proof}

Now we are ready to finish the proof. To obtain an $\e$-approximate value comparison oracle using $\mathcal{H}^{\textup{dir}}$, we can do a binary search on the function values using the queries $h^{\val}_{u,c}$ with $u=1$ and different values of $c$ (as the midpoint of the interval in the binary search) -- see Definition~\ref{def:oracle-example}. Thus, with $O(\log \frac{U}{\e})$ queries, we can implement an $\e$-approximate value comparison oracle. 
		
		For the $\e$-approximate separation, we apply Lemma \ref{lemma:gradapprox} above to the separation oracle $\g^{\sepp}_{\bar{\z}}(f, C)$, with  $O\big((n+d) \log \frac{(n+d) R}{\e}\big)$ inner product sign queries to obtain a vector $\hat{\g}^{\sepp}_{\bar{\z}}(f, C)$ such that $\|\g^{\sepp}_{\bar{\z}}(f, C) - \hat{\g}^{\sepp}_{\bar{\z}}(f, C)\| \le \frac{\e}{2R \sqrt{n+d}}$ (recall the non-zero separation oracles are assumed to have unit length). Using the same arguments as in inequalities \eqref{eq:approxCut11}-\eqref{eq:approxCut3}, we see that $\hat{\g}^{\sepp}_{\bar{\z}}(f, C)$ gives a cut valid for $C_{-\e}$, and hence is an $\e$-approximate separation oracle. 

		For the $\e$-approximate value cut oracle, we do the same thing, but apply Lemma \ref{lemma:gradapprox} to  $\g^{\sub}_{\bar{\z}}(f, C)$, with $O\left((n+d) \log \left(\frac{(n+d) MR}{\e}\right)\right)$ oracle calls to obtain an approximation $\|\g^{\sub}_{\bar{\z}}(f, C) - \hat{\g}^{\sub}_{\bar{\z}}(f, C)\| \le \frac{\e}{2M R \sqrt{n+d}}$ and then use the argument from \eqref{eq:approxCut1}.
		
	\medskip
\begin{remark}
    {Notice that} the main ingredient for implementing approximate oracles using inner product sign queries is to use such queries for approximating a given vector $\g$ (Lemma~\ref{lemma:gradapprox}). We remark that this task is related to (actively) learning a linear classifier, namely that whose normal is given by $\g$. While there are existing procedures for doing this, they only guarantee the desired approximation with high probability (albeit on a slightly weaker query model), instead of with probability 1 as we want here; see for example~\cite{activeLearnHalf}. While it is possible that these methods can be adapted to our setting, we present a different, self-contained statement and proof. 
\end{remark}

\section{Proofs of Theorem \ref{thm:binary-UB-finite-transfer} and Corollary~\ref{cor:binary-UB-finite}}\label{sec:binary oracles}

\begin{proof}[Proof of Theorem~\ref{thm:binary-UB-finite-transfer}]
Suppose we have an algorithm $\cA$ that reports an $\eps$-solution to any instance in $\I_{n, d,R,\rho, M}$ after $u$ queries to a full-information first-order oracle based on the first-order chart $\cG$, a finite set of instances $\mathcal{I} \subseteq \I_{n, d,R,\rho, M}$, and a true (unknown) instance $I \in \mathcal{I}$. Our goal is to report a feasible $\eps$-solution using few queries to $\cO(\cG, \cH)$, where $\cH$ contains all binary queries. For this, we design a procedure that maintains a family $\cU \subseteq \I$ of the instances, which always includes the true instance $I$, and possibly determines exact information to pass to $\cA$. We will show that we can always either reduce $|\cU|$ by a constant factor, or determine exact information to use with $\cA$.

Denote by $D$ the query strategy of $\cA$. Initialize $\cU = \mathcal{I}$, and (ordered) lists $Q = \emptyset, H = \emptyset$, which will serve as query-response pairs for the algorithm $\cA$. In particular, $H$ will contain full first-order information about the true instance, and $Q$ will be the sequence of queries $\cA$ makes. While $|\cU| > 1$ and $|Q| \leq u$, do the following:

\begin{itemize}[leftmargin=14pt]
    \item   Set $\z = D(Q, H)$, and query whether $\z$ is feasible. Let us simply write $\g_\z(I)$ to mean $\g^{\sepp}_\z(I)$ if $\z$ is infeasible for the instance $I$, and $\g^{\val}_\z(I)$ or $\g^{\sub}_\z(I)$ if feasible, where $\g^{\sepp}_\z$, $\g^{\val}_\z$ and $\g^{\sub}_\z$ are the first-order maps for separating hyperplane, function value and subgradient, respectively, used by the general binary oracle at $\z$. Write $V_\z$ to be the appropriate codomain. 

    \item \textbf{Case 1:} For every $\vv \in V_\z$
    , at most half of the instances $I'\in \cU$ give $\g_\z(I') = \vv$. Then there exists a set $A \subseteq V_{\z}$ such that the number of instances $I' \in \cU$ with $\g_\z(I')\in A$  is between $\frac{1}{4} |\cU|$ and $\frac{3}{4} |\cU|$. Let $\cU_0 := \{I\in \cU: \g_\z(I')\not\in A\}$, $\cU_1 := \{I\in \cU: \g_\z(I')\in A\}$; thus, $|\cU_i|\leq \frac{3}{4}|\cU|$ for $i=0,1$. 
    Query whether the true instance $I$ has $\g_\z(I)\in A$, using the binary query $h_A: h_A(\vv) = 1$ iff $\vv \in A$, so that $h_A(\g_{\z}(I)) = 0$ if $I \in \cU_0$ and $h_A(\g_{\z}(I)) = 1$ if $I \in \cU_1$.  
    Update $\cU \leftarrow \cU_{q}$, where $q$ is the answer to the query $(\z, h_A)$ given by the oracle. 
    
    \vspace{4pt}
    \item \textbf{Case 2:} There exists $\bar{\vv} \in V_\z$ such that more than half of the instances $I' \in \cU$ have $\g_\z(I') = \bar{\vv}$. Query whether the true instance $I$ has $\g_\z(I) = \bar\vv$, using the binary query $h: h(\bar{\vv}) = 1$ and $h(\x) = 0$ for all other inputs $\x \not = \bar{\vv}$, so that $h(\g_{\z}(I)) = 1$ iff $\g_{\z}(I) = \bar{\vv}$. 
    If $\g_\z(I) \neq \bar\vv$, then update $\cU$ by removing from it all instances $I'$ such that $\g_\z(I') = \bar\vv$, reducing the size of $\cU$ by at least half. Otherwise, if $\z$ was infeasible, we then know the exact separating hyperplane (or function value or subgradient, in the case $\z$ is feasible) for the true instance $I$ and the first-order chart $\cG$, namely $\g^{\sepp}_\z(I) = \bar\vv$, and so employ it to update $Q$ and $H$ by appending $\z$ and $\vv$ to them, respectively, which will serve as information for the algorithm $\cA$.
\end{itemize}

In each step, either the size of $\cU$ decreases by at least $1/4$, or full (exact) first-order information at the query point determined by the query strategy of $\cA$ is obtained and $Q, H$ are updated. The former can only happen $O(\log |\mathcal{I}|)$ times until $\cU$ becomes a singleton, in which case we know the true instance and can report its optimal solution, while if the latter happens $u$ times, one can run the algorithm $\cA$ with the information $(Q,H)$ to report an $\eps$-approximate solution to the true instance $I$, noting that since the points in $Q$ were determined according to the query strategy of the algorithm, the information in $(Q,H)$ is indeed sufficient to run the $\cA$ on for $u$ iterations. Hence, after at most $\log|\cI|+ u$ queries to the general binary oracle, one can report an $\eps$-solution to the true instance.\end{proof}

Corollary \ref{cor:binary-UB-finite} follows immediately when one uses the centerpoint-based algorithm of \cite{oertel2014integer,basu2017centerpoints} as $\cA$, which is the exact oracle version of Algorithm \ref{alg:centerpoints} above and needs at most $$O\left(2^n\,d\, (n+d) \log\left(\frac{dMR}{\min\{\rho,1\}\epsilon}\right)\right)
$$
queries in the mixed-integer case, or
$$O\left(d\log\left(\frac{MR}{\min\{\rho,1\}\epsilon}\right)\right)$$ queries in the continuous ($n=0$) case to produce an $\eps$-approximate solution to any instance in $\cI_{n, d, \rho, M, R}$.


\section{Statements and Declarations}

\paragraph{Competing Interests.} There are no financial or non-financial interests that are directly or indirectly related to this work.

{
\bibliographystyle{plain}
\bibliography{full-bib}
}


\newpage
\appendix 

\section{Information Games}\label{sec: information games}

In this section we define the game-theoretic perspective for information complexity and prove Lemma \ref{lemma:infoAdv}, which we restate for convenience. 

\infoAdv*

Let $\I$ be a family of optimization instances in $\R^n \times \R^d$. Let $\cO$ be an oracle for $\cI$. Let $\epsilon > 0$. We define the {\em information game for $(\I, \cO, \epsilon)$} as a two player game between $\alg$ and $\adv$. The players make moves alternatingly with $\alg$ moving first. For $\alg$, a move is a choice of query $q$ from the oracle $\cO$. For $\adv$, a move is a choice of $r \in H$, where $H$ is the response set of $q$ from the immediately preceding move by $\alg$. The only constraint on $\adv$ is that there should exist at least one instance in $\I$ that will give the same responses as the moves made by $\adv$ in the game so far to queries corresponding to the moves made by $\alg$ so far. More precisely, at any stage of the game, there must exist an instance $I$ such that for every response $r$ given by $\adv$ at any round of the game played so far to an immediately preceding query $q$ made by $\alg$, we have $r=q(I)$.

    Notice that a game strategy for $\alg$ is equivalent to a query strategy from Definition~\ref{def:query-strategy}. Also, any instance $\bar{I} \in \I$ gives a game strategy for $\adv$, by simply reporting the response $q(\bar{I})$ for all queries $q$ made by $\alg$.

    In the game tree defined by the above game, a node is said to be {\em $\epsilon$-unambiguous} if the instances that are consistent with $\adv$'s moves all have a common $\epsilon$-approximate solution; otherwise, the node is {\em $\epsilon$-ambiguous}. The game stops when the game reaches an $\epsilon$-unambiguous node; thus, all leaf nodes of the game tree are $\epsilon$-unambiguous. When the game stops at an $\epsilon$-unambigious node $L$, the {\em loss (or payoff) $\Lag_\epsilon(L)$} is defined as the number of moves made by $\alg$ to arrive at $L$ in the game tree, i.e., it is exactly half of the depth of $L$ in the game tree.

    A subtree $T$ of the game tree is said to be a {\em full subgame tree} if it is precisely the set of all descendants of a node in the game tree (including itself). A subtree $T$ is said to have {\em finite horizon} if it has bounded depth, i.e., there exists $D > 0$ such that all nodes of $T$ have depth (in $T$) bounded by $D$. Otherwise, $T$ is said to have {\em infinite horizon} (note this includes the case where $T$ has an infinite length path and the case where all paths are finite length, but there is no upper bound on the lengths).
    
    We define the value $v(T)$ of a finite horizon subtree $T$ by induction on the depth of $T$, which we call the {\em value of the subgame defined by $T$}.
    
    \begin{itemize}
        \item If $T$ consists of a single node $N$, $v(T):=\Lag_\epsilon(N)$ if $N$ is an $\epsilon$-unambiguous node, else $v(T):=\infty$. 
        \item If $T$ is rooted at a node $N$ corresponding to $\alg$, then $v(T):= \inf\{v(T'): T' \textrm{ child subtree in }T\textrm{ at }N\}$. 
        \item If $T$ is rooted at a node $N$ corresponding to $\adv$, then $v(T):= \sup\{v(T'): T' \textrm{ child subtree in }T\textrm{ at }N\}$. 
    \end{itemize} 
    
    Let $T$ and $T'$ be subtrees of the game tree. $T'$ is said to be a {\em (finite horizon) truncation of $T$} if there exists $D >0$ such that $T'$ consists of all nodes of $T$ with depth bounded by $D$. Observe that if $T, T'$ are finite horizon subtrees and $T'$ is a truncation of $T$, then $v(T') \geq v(T)$. We can thus naturally extend the notion of value to an infinite horizon subtree $T$, as a limit of its finite horizon truncations: $$v(T) := \inf\{v(T'): T' \textrm{ finite horizon truncation of }T\}.$$

    A {\em game strategy} for $\alg$ (resp. $\adv$) is a choice of a specific move at every non leaf node corresponding to $\alg$ (resp. $\adv$). Thus, any game strategy $Q$ for $\alg$ (resp. game strategy $A$ for $\adv$) corresponds to a subtree of the entire game tree, where we select a single outgoing edge at every node corresponding to $\alg$ (resp. $\adv$). Such a subtree is called a {\em decision tree for $\alg$} (resp. $\adv$). With a slight abuse of notation, we will use $v(Q)$ (resp. $v(A)$) to denote the values of these decision trees. A simultaneous choice of strategies $Q, A$ yields a single path in the original game tree; $v(Q,A)$ will denote the value of this path (subtree).

    We note that we can express the notion of an $\e$-hard adversary for $\ell$ using this notation; it follows directly from the definition of the value function $v(\cdot)$.

    \begin{observation} \label{obs:hardAdv}
        An adversary $A$ is $\e$-hard for $\ell$-rounds iff $\inf_Q v(Q,A) > \ell$, where the infimum is taken over all strategies for $\alg$. 
    \end{observation}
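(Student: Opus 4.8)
The plan is to unfold both sides of the claimed equivalence in terms of the single path through the game tree induced by a strategy $Q$ for $\alg$ together with the fixed strategy $A$ for $\adv$, and then observe that the two sides literally say the same thing. The only real content is a bookkeeping lemma identifying the value $v(Q,A)$ with a counting quantity.

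\textbf{Step 1: compute $v(Q,A)$.} First I would show that, for any strategy $Q$ for $\alg$, the value $v(Q,A)$ of the path $P$ induced by $(Q,A)$ equals the number of moves $\alg$ makes along $P$ before the game reaches an $\eps$-unambiguous node, with the convention that this number is $\infty$ if $P$ never reaches such a node (equivalently, every node on $P$ is $\eps$-ambiguous). If $P$ is finite it is a chain from the root down to an $\eps$-unambiguous leaf $L$; evaluating the inductive definition of $v$ along this chain, each internal node has a unique child, so the $\inf$ (resp.\ $\sup$) over its child subtrees just returns that child's value, whence $v(P) = \Lag_\eps(L)$, which is by definition the number of $\alg$-moves needed to reach $L$. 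If $P$ is infinite, then every finite-horizon truncation of $P$ is a chain whose bottom node is $\eps$-ambiguous and therefore has value $\infty$; propagating $\infty$ up the chain, each such truncation has value $\infty$, so $v(P) = \inf\{\infty\} = \infty$. This gives $v(Q,A) \in \N \cup \{\infty\}$, equal to the announced count.

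\textbf{Step 2: translate $\eps$-hardness and pass to the infimum.} By Definition~\ref{def:surviving instances} and the definition of $\eps$-hardness, ``$A$ is $\eps$-hard for $\ell$ rounds'' says exactly: for every strategy $Q$ of $\alg$, the set of instances consistent with $\adv$'s first $\ell$ responses along $P(Q,A)$ is $\eps$-ambiguous. Since the game stops precisely when an $\eps$-unambiguous node is reached, and once stopped the surviving set only shrinks (so it keeps possessing a common $\eps$-approximate solution), this is equivalent to saying that $P(Q,A)$ does not reach an $\eps$-unambiguous node within $\alg$'s first $\ell$ moves, for every $Q$. By Step 1 this is exactly $v(Q,A) > \ell$ for every $Q$. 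Finally, because each $v(Q,A)$ lies in $\N \cup \{\infty\}$, the condition ``$v(Q,A) > \ell$ for all $Q$'' is equivalent to ``$v(Q,A) \ge \ell+1$ for all $Q$'', hence to $\inf_Q v(Q,A) \ge \ell+1 > \ell$; conversely $\inf_Q v(Q,A) > \ell$ trivially forces $v(Q,A) > \ell$ for each $Q$. Chaining these equivalences yields the statement.

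\textbf{Main obstacle.} There is no deep difficulty here; the delicate part is purely the bookkeeping around conventions, namely: that leaves of the game tree are \emph{exactly} the $\eps$-unambiguous nodes; that the transcript phrase ``after $\ell$ rounds'' corresponds to the node at $\alg$-depth $\ell$ on $P(Q,A)$, and that the surviving set there is the set of instances consistent with $\adv$'s first $\ell$ responses; and the correct treatment of the infinite-horizon case via the limit-of-truncations definition of $v$ (together with the fact that an infinite path can never have passed through an $\eps$-unambiguous node, since the game would have stopped). Each of these is where a loose application of the definitions could go wrong, so the write-up should spell them out explicitly.
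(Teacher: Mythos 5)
Your proposal is correct and takes essentially the same route as the paper, which offers no detailed argument and simply asserts that the observation ``follows directly from the definition of the value function $v(\cdot)$''; your two steps (identifying $v(Q,A)$ with the number of $\alg$-moves needed to reach an $\eps$-unambiguous node along the induced path, and using integrality of values to pass from ``$v(Q,A)>\ell$ for all $Q$'' to ``$\inf_Q v(Q,A)>\ell$'') are exactly the bookkeeping that assertion suppresses. Nothing further is needed.
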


    We now focus on proving Lemma \ref{lemma:infoAdv}, which requires a few preparatory observations. The first is the following, which is a useful extension of the recursive nature of the value of a subtree to infinite horizon trees. 

\begin{lemma}\label{lem:value-recursion}
Let $T$ be any subtree of the game tree (not necessarily of finite horizon). Then, \begin{itemize}
    \item If $T$ consists of a single node $N$, $v(T)=\Lag_\epsilon(N)$ if $N$ is an $\epsilon$-unambiguous node, else $v(N)=\infty$. 
    \item If $T$ is rooted at a node $N$ corresponding to $\alg$, then $v(T)= \inf\{v(T'): T' \textrm{ child subtree in }T\textrm{ at }N\}$. 
    \item If $T$ is rooted at a node $N$ corresponding to $\adv$, then $v(T)= \sup\{v(T'): T' \textrm{ child subtree in }T\textrm{ at }N\}$. 
\end{itemize} 
\end{lemma}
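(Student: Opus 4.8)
The plan is to prove Lemma~\ref{lem:value-recursion} by showing that the ``limit of finite-horizon truncations'' definition of $v(T)$ is compatible with the recursive structure, treating the $\alg$-node and $\adv$-node cases separately. The single-node case is immediate from the definition (a single node is its own only finite-horizon truncation), so I focus on the two recursive cases.

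\textbf{The $\adv$-node case (the easy direction of monotonicity helps here).} Suppose $T$ is rooted at a node $N$ corresponding to $\adv$, with child subtrees $\{T_i'\}_{i}$ hanging off the children of $N$. First I would observe that any finite-horizon truncation $T'$ of $T$ of depth $D$ restricts, on each child branch, to a finite-horizon truncation $T_i'$ of $T_i$ of depth $D-1$, and conversely any choice of finite-horizon truncations of the $T_i$ assembles (together with $N$) into a finite-horizon truncation of $T$. Since for finite-horizon trees rooted at an $\adv$-node we already have $v(T') = \sup_i v(T_i')$, taking the infimum over all truncations and using that $\inf$ and $\sup$ interact appropriately gives
\begin{align*}
v(T) = \inf_{T' \text{ f.h. trunc. of } T} \sup_i v(T_i') .
\end{align*}
I would then argue this equals $\sup_i \inf_{T_i'} v(T_i') = \sup_i v(T_i)$: the inequality $\geq$ is clear since for each fixed $i$ one can let the truncation shrink only on the $i$-th branch; the inequality $\leq$ requires a short argument that for any finite $\epsilon$-target one can simultaneously choose truncations on all branches. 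The subtlety is that $N$ may have infinitely many children, so $\sup_i$ is over a possibly infinite index set; but since $\sup$ commutes with $\inf$ only in one direction in general, I would handle the $\leq$ direction by: given $T'$ a truncation, $\sup_i v(T_i') \geq \sup_i v(T_i) = v(T)$ would be the wrong direction — instead I note $v(T) = \inf_{T'}\sup_i v(T_i') \ge \inf_{T'} v(T_j')$ for each fixed $j$ (restricting attention to truncations and using $\sup_i \ge$ the $j$-th term is backwards), so more carefully: for each $j$, taking truncations that agree with $T$ as deeply as possible on branch $j$ shows $v(T) \ge v(T_j)$ is false in general... I would instead prove $v(T)=\sup_i v(T_i)$ directly from the definition: $(\geq)$ since any truncation's value is $\ge \sup_i v(T_i')\ge v(T_j)$ for the relevant $j$ — no. Let me just say the cleanest route is to prove the two inequalities by hand using that truncation values only decrease as depth increases, taking limits of sequences of truncations.

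\textbf{The $\alg$-node case.} Here $T$ is rooted at an $\alg$-node $N$ with child subtrees $\{T_i'\}$ and $v(T') = \inf_i v(T_i')$ for finite-horizon truncations. Now $v(T) = \inf_{T'} \inf_i v(T_i') = \inf_i \inf_{T_i'} v(T_i') = \inf_i v(T_i)$, using that a double infimum can be swapped freely (no interchange obstruction). This case is genuinely routine.

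\textbf{Expected main obstacle.} The real work is the $\adv$-node case and specifically the $\le$ inequality $v(T) \le \sup_i v(T_i)$ when $N$ has infinitely many children: I must show that for the infimum over joint truncations, I can make $\sup_i v(T_i')$ arbitrarily close to $\sup_i v(T_i)$, which is not automatic since deepening the truncation on infinitely many branches at once is exactly what a single depth-$D$ truncation does --- so in fact a depth-$D$ truncation \emph{does} act uniformly on all branches, which is what saves us: $v(T) = \inf_D \sup_i v(T_i'^{(D)})$ where $T_i'^{(D)}$ is the depth-$(D-1)$ truncation of $T_i$, and since $v(T_i'^{(D)}) \downarrow v(T_i)$ monotonically in $D$, one shows $\inf_D \sup_i v(T_i'^{(D)}) = \sup_i \inf_D v(T_i'^{(D)}) = \sup_i v(T_i)$ by a standard interchange-of-$\inf$-and-$\sup$-for-monotone-nets argument (the $\le$ direction uses monotonicity in $D$; the $\ge$ direction is free). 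I would present this monotone-convergence interchange as the key lemma and then the rest follows by bookkeeping. The other minor point to be careful about is the convention $v = \infty$ at ambiguous leaves and that the game tree may have infinite-length paths, so I would explicitly note that $v(T)$ is well-defined in $[0,\infty]$ as an infimum of a nonempty set (the whole tree truncated at depth $0$ is always a valid finite-horizon truncation).
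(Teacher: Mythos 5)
Your overall approach — pass to finite-horizon truncations, observe that a depth-$D$ truncation of $T$ restricts on each branch to a depth-$(D-1)$ truncation of the child subtree, then argue about interchanging $\inf_D$ with $\inf_i$ (or $\sup_i$) — is exactly the observation the paper's one-line proof makes, and your handling of the $\alg$-node case (a double infimum, which swaps freely) and of the free ``$\geq$'' direction in the $\adv$-node case are both correct. You are also right to flag that the ``$\leq$'' direction of the $\adv$-node case is where the real work lives.

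However, the justification you give for that step is a gap. There is no ``standard interchange-of-$\inf$-and-$\sup$-for-monotone-nets'' theorem of the kind you invoke: pointwise monotonicity of $D \mapsto a_i(D)$ does not imply $\inf_D \sup_i a_i(D) = \sup_i \inf_D a_i(D)$. Concretely, take $a_i(D) = 1$ if $D < i$ and $a_i(D) = 0$ if $D \geq i$. Each $a_i(\cdot)$ is non-increasing and converges to $0$, so $\sup_i \inf_D a_i(D) = 0$; yet for every $D$ there is an $i > D$ with $a_i(D) = 1$, so $\sup_i a_i(D) = 1$ for all $D$ and $\inf_D \sup_i a_i(D) = 1$. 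The fact that a single depth-$D$ truncation ``acts uniformly on all branches'' does not by itself rule out this kind of behaviour, because the depth $D_i$ at which $v(T_i^{(D)})$ reaches its limit $v(T_i)$ could a priori be unbounded over $i$.

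What actually makes the interchange go through is a structural property of the game that your proof never uses: the loss $\Lag_\epsilon(L)$ of a leaf equals half its depth in the game tree. Consequently, if $\sup_i v(T_i) = V < \infty$ and the children $c_i$ of $N$ sit at common depth $d$, then for each $i$ with $v(T_i) \le V$ there is a strategy for $\alg$ in some truncation of $T_i$ that reaches only leaves of loss at most $V$, i.e., leaves at depth at most $2V - d$ inside $T_i$; restricting that strategy shows $v(T_i^{(2V-d)}) \le V$. Because the threshold $2V - d$ is the same for every sibling, one gets $\sup_i v(T_i^{(2V-d)}) \le V$, hence $v(T) \le V = \sup_i v(T_i)$. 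This uniform-in-$i$ depth bound is the missing ingredient; without it the inequality you need is simply false as a statement about abstract monotone families. (The paper's own proof is also a single sentence that elides this point, but as written your key step rests on a non-theorem, so the gap is genuine.)
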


\begin{proof}
The result follows from the fact that for any finite horizon truncation $T'$ of $T$, the subtrees of $T'$ rooted at the children of the root of $T'$ (and $T$) are finite horizon truncations of the subtrees of $T$ rooted at the children of the root.
\end{proof}

The following technical property will be used several times in the sequel. 

\begin{lemma}\label{lem:subgame-finite}
The value of the full game is finite if and only if every full subgame has finite value.
\end{lemma}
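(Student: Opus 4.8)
\textbf{Proof plan for Lemma~\ref{lem:subgame-finite}.}

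The plan is to prove both directions by contradiction, exploiting the recursive characterization of the value function from Lemma~\ref{lem:value-recursion}. The forward direction (full game has finite value $\Rightarrow$ every full subgame has finite value) is the substantive one; the reverse direction is essentially immediate since the full game tree is itself a full subgame of itself.

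For the reverse direction: if every full subgame has finite value, then in particular the full game tree (which is the full subgame rooted at the root node) has finite value. So there is nothing to prove beyond noting that the root's descendant set is the whole tree.

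For the forward direction, suppose for contradiction that the full game has finite value $v_0 := v(T_{\textup{root}}) < \infty$, but some full subgame $T_N$ rooted at a node $N$ has $v(T_N) = \infty$. First I would observe that the node $N$ is reached from the root by a finite path $N_0 = N_{\textup{root}}, N_1, \ldots, N_m = N$. The key step is then to propagate the value $\infty$ up this path to the root, contradicting $v(T_{\textup{root}}) = v_0 < \infty$. Concretely, I would prove by downward induction on $i$ from $m$ to $0$ that $v(T_{N_i}) = \infty$, where $T_{N_i}$ is the full subgame rooted at $N_i$. The base case $i = m$ is the assumption. For the inductive step, apply Lemma~\ref{lem:value-recursion} at node $N_i$: if $N_i$ is an $\adv$-node, then $v(T_{N_i}) = \sup\{v(T') : T' \text{ child subtree at } N_i\} \ge v(T_{N_{i+1}}) = \infty$, since $T_{N_{i+1}}$ is one of the child subtrees. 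The only place where care is needed is when $N_i$ is an $\alg$-node, because there $v(T_{N_i}) = \inf\{v(T') : T' \text{ child subtree at } N_i\}$, and an infimum over a set that includes an $\infty$ value need not be $\infty$ — unless \emph{every} child subtree has value $\infty$. This is the main obstacle, and I expect it to be resolved by the following observation: at an $\alg$-node the children correspond to the different queries $q \in \mathcal{Q}$ that $\alg$ may pose, and each child subtree $T_{N_i^q}$ is in turn rooted at an $\adv$-node whose children correspond to the possible responses; but regardless of which query is posed, the adversary retains the ability to steer the game into a position from which value $\infty$ is forced — more precisely, one needs that the ambiguity of the instance set at node $N_i$ (inherited upward, since restricting responses only shrinks the surviving instance set) already guarantees that from \emph{every} child the adversary can force $\infty$. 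I would make this precise by strengthening the inductive claim: rather than tracking a single path, I would argue that the set of nodes with infinite subgame value is "closed upward" in the sense that if $v(T_N) = \infty$ then $v(T_{N'}) = \infty$ for the parent $N'$ — which for $\alg$-parents requires showing all siblings of $N$ also have infinite value.

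To handle the $\alg$-node subtlety cleanly, I would instead argue as follows. Define a node $N$ to be \emph{bad} if $v(T_N) = \infty$. I claim that if any node is bad, then the root is bad. Suppose not: let $N$ be a bad node at minimal depth among all bad nodes, and let $N'$ be its parent (which exists and is not bad by minimality). If $N'$ is an $\adv$-node, then by Lemma~\ref{lem:value-recursion}, $v(T_{N'}) = \sup$ over children $\ge v(T_N) = \infty$, so $N'$ is bad, contradiction. If $N'$ is an $\alg$-node, I need to use the structure of the game: $N'$ corresponds to a state where the surviving instance set $\mathcal{S}$ is $\eps$-ambiguous (otherwise the game would have stopped), and $N$ is obtained from $N'$ by $\alg$ choosing a query $q$ and then $\adv$ choosing a response (so actually $N$ is a grandchild; I would restructure to consider $\alg$-node $N'$, its child $\adv$-node $M$ corresponding to query $q$, and $M$'s child $N$ corresponding to response $r$). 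Since $N$ is bad, $M$ is bad (as $M$ is an $\adv$-node and $N$ is a child). Now $M$'s sibling $\adv$-nodes correspond to other queries $q'$ that $\alg$ could pose from $N'$; but the adversary's power does not decrease by $\alg$ choosing a different query, so from every such sibling the adversary can still force the game to remain $\eps$-ambiguous forever (formally: the set of instances consistent with the transcript at $N'$ is $\eps$-ambiguous, and this ambiguity must eventually manifest no matter which queries $\alg$ poses, since a value-$\infty$ subgame is exactly one where $\adv$ can keep the node $\eps$-ambiguous; I would invoke Lemma~\ref{lem:value-recursion} together with the definition of $\eps$-ambiguity to show every query-child of $N'$ is bad). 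Hence $v(T_{N'}) = \inf$ over bad children $= \infty$, so $N'$ is bad, contradicting minimality. Therefore no node is bad, i.e., every full subgame has finite value. I expect the delicate point to be justifying that "the adversary's power to force $\eps$-ambiguity indefinitely is preserved under $\alg$ switching queries," and I would pin this down by an explicit argument that a value-$\infty$ $\alg$-node means \emph{all} its query-children have value $\infty$ — which is exactly what the $\inf$ in Lemma~\ref{lem:value-recursion} forces once we know the node is not $\eps$-unambiguous and hence cannot be a leaf with finite payoff.
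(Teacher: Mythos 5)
Your reverse direction is fine and matches the paper. For the forward direction, however, your ``propagate badness upward'' strategy has a genuine gap precisely at the point you yourself flag: the $\alg$-node step. You want to argue that if an $\adv$-child $M$ of an $\alg$-node $N'$ has infinite subgame value, then every sibling of $M$ does too, and hence $v(T_{N'}) = \inf_{\text{children}} = \infty$. But your justification is the wrong implication. You appeal to the fact that a value-$\infty$ $\alg$-node has all its children of value $\infty$ --- that is the \emph{downward} consequence of $v(T_{N'})=\infty$, whereas you are trying to \emph{establish} $v(T_{N'})=\infty$ from just one bad child. Your other justification --- that the surviving instance set at $N'$ is $\eps$-ambiguous, so $\adv$ can keep it ambiguous forever under any query --- is simply false: ambiguity at $N'$ does not preclude a well-chosen query at $N'$ from disambiguating immediately (indeed, that is precisely what a good algorithm does). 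The ``all siblings of a bad node are bad'' claim is logically no easier than the lemma itself, and proving it requires the very idea you would need to discover anyway.

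The paper avoids this entirely by bounding every subgame's value from above rather than pushing $\infty$ up the tree. Since the full game has finite value, there is a finite-horizon truncation $T$ of the full game tree with $v(T) < \infty$. The key structural fact is an asymmetry between the players: $\alg$'s available queries are identical at every node of the game tree (it may always pose any query in $\mathcal{Q}$), whereas the set of responses available to $\adv$ at a node deeper in the tree is a \emph{subset} of those available at the corresponding shallower node, because the surviving instance set only shrinks along a path. Consequently $T$ is isomorphic to a subtree of (a truncation of) the subgame at any node $N$, with $\adv$ having no more --- and possibly fewer --- options there. Replaying the finite strategy encoded by $T$ from $N$ gives $v(T_N) \le v(T) + \textup{depth}(N) + 1 < \infty$ for every full subgame, a quantitative strengthening of the lemma (this is the paper's Claim~\ref{claim:subtree-value}). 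If you want to salvage your contradiction argument, the cleanest repair is to note that this replay bound already shows the bad node $N$ you posited cannot exist, rather than trying to show $N'$ must be bad.
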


\begin{proof}
Since the full game is a subgame of itself, if every subgame has finite value so does the full game. For the other direction, suppose now that the full game tree has finite value. Then, there is a finite horizon truncation $T$ with finite value. The next claim shows that for every full subgame tree, its value can be upper bounded using the value of $T$, so in particular is finite as desired.

\begin{claim}\label{claim:subtree-value} Let $S$ be a  full subgame tree, and let $N$ denote its root node. Then $v(S) \le v(T) + depth(N) + 1$.
\end{claim}

\begin{proof} 
Let $D$ be the depth of $T$. Consider the truncation $S'$ of $S$ which has depth $D$ if $N$ is a node corresponding to $\alg$, and depth $D+1$ if $N$ is a node corresponding to $\adv$. Since $S'$ is a truncation of $S$, it follows that $v(S) \leq v(S')$, and so we only need to show $v(S') \leq v(T) + depth(N) + 1$.

    To prove this bound, consider first the case when $N$ is a node corresponding to $\alg$. In this case, $T$ is isomorphic to a subtree of $S'$, i.e., each node of $T$ corresponds to a node in $S'$ in the natural way: the root of $T$ corresponds to the root of $S'$, the children of the root of $T$ corresponds to (some of) the children of the root of $S'$, etc. Observe that the nodes corresponding to $\alg$ have the same possibilities for moves in both trees $S'$ and $T$ (in fact, every node of $\alg$ in the full game tree has the same choice of moves by definition). However, nodes corresponding to $\adv$ have a smaller set of choices in $S'$, i.e., children, compared to $T$, because they are deeper in the game tree. Thus, in the inductive definition of $v(S')$, a supremum computed at a node in $S'$ for $\adv$ is over a smaller set of choices compared to the corresponding supremum when computing $v(T)$. Finally, the depth (in the full game tree) of any node in $S$ is precisely $depth(N)$ more than the depth of the corresponding node in $T$. Thus, $v(S') \leq v(T) + depth(N) \leq v(T) + depth(N) + 1$.

Next, consider the case when $N$ is a node corresponding to $\adv$. All the subtrees of $S'$ rooted at the children of $N$ are again in correspondence with $T$, since $S$ is of depth $D+1$. The previous argument applies to these subtrees and thus, their values are all bounded by $v(T) + depth(N) + 1$. Taking a supremum at node $N$ shows that $v(S') \leq v(T) + depth(N) + 1$.
\end{proof}
\end{proof}

We first prove the existence of a saddle-point in this game (in fact, the same argument gives that there is a subgame perfect equilibrium, which essentially means that this saddle point property holds for every subtree).

\begin{lemma} \label{lemma:equi}
    Suppose the value of the full game is finite (in other words, the optimization problem is solvable with finitely many queries using the given oracles). Then, there exists a saddle-point $Q^\star, A^\star$ for $\alg$ and $\adv$ respectively, i.e., strategies $Q^\star$ and $A^\star$ satisfying
    \begin{align}
    \inf_Q v(Q, A^\star) = v(Q^\star, A^\star) = \sup_A\; v(Q^\star,A) = v(\textup{full game}), \label{eq:equi}
    \end{align}
    where \textup{full game} denotes the full game tree, the infimum on the left-hand side is over all possible strategies for $\alg$, and the supremum on the right-hand side is over all strategies for $\adv$. 
\end{lemma}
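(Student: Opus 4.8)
The plan is to build the saddle-point strategies $Q^\star$ and $A^\star$ directly by a greedy, ``subgame-optimal'' construction using the recursive characterization of the value function in Lemma~\ref{lem:value-recursion}, and then verify the saddle-point identity \eqref{eq:equi} by induction on depth, with Lemma~\ref{lem:subgame-finite} guaranteeing that every subgame we encounter has finite value so all the infima and suprema behave well. First I would define, for every node $N$ of the full game tree, the quantity $v_N := v(T_N)$ where $T_N$ is the full subgame tree rooted at $N$; by Lemma~\ref{lem:subgame-finite} each $v_N$ is finite, and by Lemma~\ref{lem:value-recursion} we have $v_N = \Lag_\epsilon(N)$ at leaves, $v_N = \inf_{N' \text{ child of } N} v_{N'}$ at $\alg$-nodes, and $v_N = \sup_{N' \text{ child of } N} v_{N'}$ at $\adv$-nodes.

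Next I would construct $Q^\star$: at every $\alg$-node $N$, the infimum $v_N = \inf_{N'} v_{N'}$ over children need not be attained, so instead of picking a single minimizing child I would fix, once and for all, a function that assigns to each $\alg$-node $N$ a child $c(N)$ with $v_{c(N)} \le v_N + \eta_N$ for a suitably chosen summable sequence of slacks $\eta_N > 0$ (e.g.\ $\eta_N = 2^{-\mathrm{depth}(N)}\delta$); this defines $Q^\star$. For $A^\star$ I would, at every $\adv$-node $N$, attempt to pick a child achieving $v_{c(N)} = v_N$; here I must be careful, because the supremum over $\adv$'s (possibly infinitely many) legal responses need not be attained either. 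So symmetrically $A^\star$ picks a child $c(N)$ with $v_{c(N)} \ge v_N - \eta_N$. Then I would show: (i) $\sup_A v(Q^\star, A) \le v(\text{full game})$, by arguing along any play consistent with $Q^\star$ the running value can only increase by at most $\sum \eta_N \le \delta$ over the true value at the root, taking a limit of truncations via the definition of $v$ on infinite-horizon trees; and (ii) $\inf_Q v(Q, A^\star) \ge v(\text{full game}) - \delta$, symmetrically. Since $\delta$ was arbitrary and we want an \emph{exact} saddle point, the honest fix is: because all relevant values are natural numbers or $+\infty$ (the payoff $\Lag_\epsilon(L)$ is an integer and values are infima/suprema of such, hence still take values in $\N \cup \{+\infty\}$ when attained along finite plays — though suprema of integers may fail to be attained), I would instead observe that at $\alg$-nodes the infimum of a set of values each $\ge$ some integer is attained when finite, so $Q^\star$ can be taken exactly optimal; the only genuine subtlety is at $\adv$-nodes, where I would argue that since the game must terminate at an $\epsilon$-unambiguous leaf after finitely many $\alg$-moves along $Q^\star$ (because $v(\text{full game})$ is finite), the relevant suprema are effectively over finite depth and hence attained, letting $A^\star$ be taken exactly optimal too.

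The main obstacle I expect is precisely this attainment issue at $\adv$-nodes combined with the infinite branching of $\adv$ and the infinite-horizon definition of $v$. The supremum defining $v_N$ at an $\adv$-node ranges over all responses $r \in H$ consistent with the transcript, and $H$ may be infinite (e.g.\ $\R$ or $\R^{n+d}$), so a priori no maximizing child exists; moreover $v$ on infinite-horizon subtrees is defined as an infimum over finite truncations, so one cannot naively ``read off'' an optimal move. The way I would resolve it: first prove a lemma that when $v(\text{full game}) = k < \infty$, every play consistent with an optimal-at-$\alg$ strategy $Q^\star$ reaches a leaf within $k$ $\alg$-moves regardless of $\adv$'s play (otherwise one exhibits a truncation of value $> k$, contradicting $v(\text{full game}) = k$); this collapses the effective game to a finite-depth tree from $\adv$'s side, on which a standard backward-induction argument produces an exact optimal $A^\star$ and establishes \eqref{eq:equi} node-by-node. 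So the proof reduces to: (1) the finite-value hypothesis forces bounded-depth play against $Q^\star$; (2) on a bounded-depth tree the recursion of Lemma~\ref{lem:value-recursion} is literally the finite minimax recursion, whose solution is the claimed saddle point.
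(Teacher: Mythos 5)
Your overall plan matches the paper's proof: define $v_N$ at every node via Lemmas~\ref{lem:value-recursion} and~\ref{lem:subgame-finite}, construct $Q^\star$ and $A^\star$ by greedily selecting extremal children, and verify the saddle-point identity through finite-horizon truncations. The genuine gap is in your handling of attainment at $\adv$-nodes. You assert that ``suprema of integers may fail to be attained,'' but this is false: if a set $S \subseteq \Z \cup \{+\infty\}$ has finite supremum $m$, then $S$ contains no $+\infty$, some element of $S$ exceeds $m-1$, and being an integer at most $m$ it must equal $m$, so the supremum is a maximum. This, combined with Lemma~\ref{lem:subgame-finite} (every full subgame has finite value, and these values are integers), is precisely what lets one pick an exactly optimal child at \emph{every} $\adv$-node and so define $A^\star$ on the entire game tree.

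Your proposed workaround does not close the gap. Bounding the depth of plays against $Q^\star$ by $2\,v(\text{full game})$ is a correct and useful observation (the paper uses it to show the equilibrium path has finite length), but bounded depth alone does not yield attainment at an $\adv$-node: the adversary may still have infinitely many legal responses whose subtree values could a priori approach the supremum without reaching it; what rules this out is integer-valuedness, not finite depth. More importantly, performing backward induction on ``the effective finite tree against $Q^\star$'' only specifies $A^\star$ on nodes reachable when $\alg$ plays $Q^\star$, whereas the saddle-point condition $\inf_Q v(Q, A^\star) = v(\text{full game})$ requires $A^\star$ to respond optimally to \emph{every} strategy $Q$, so $A^\star$ must be defined and optimal at all $\adv$-nodes of the full tree. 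Once you accept that finite integer suprema are attained, the exact greedy construction at both node types is available, and your verification-by-truncations plan is essentially the argument in the paper.
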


\begin{proof}

Since we assume the full game has finite value, by Lemma~\ref{lem:subgame-finite} we know that the value $v(S)$ of any full subgame tree $S$ is also finite. Combined with the fact that $v(S)$ is integer valued, the supremum or infimum in Lemma~\ref{lem:value-recursion} is attained. A saddle-point $Q^\star$ and $A^\star$ can now be defined as follows. At any node $N$ corresponding to a move by $\alg$, select the move that attains the infimum giving the value of the full subgame tree rooted at $N$ by Lemma~\ref{lem:value-recursion}. At any node $N$ corresponding to a move by $\adv$, select the move that attains the supremum giving the value of the full subgame tree rooted at $N$ by Lemma~\ref{lem:value-recursion}. 

We now prove that $Q^\star,A^\star$ satisfy the desired properties. When the full tree is finite, this follows directly from the definition of $Q^\star,A^\star$; the case where it is not finite is that requires more subtle arguments based on finite horizon truncations. Let $T$ denote the full game tree, and for a node $u$ let $T_u$ denote its full subtree rooted at $u$.

We start by proving that $v(Q^\star,A^\star) = v(T)$. Let $p_1,p_2,\ldots$ be the nodes in the path $P^\star$ induced by $Q^\star,A^\star$ ($p_1$ being the root of the full game tree). By the second item of Lemma \ref{lem:value-recursion}, we have that $v(T_{p_1})$ is the smallest value of the subtrees rooted at the children of $p_1$; since the strategy $Q^{\star}$ chooses precisely such child $p_2$ of smallest value, we get $v(T_{p_1}) = v(T_{p_2})$. Similarly, using the same lemma and the definition of the adversarial strategy $A^{\star}$, we get $v(T_{p_2}) = v(T_{p_3})$. Thus, subtrees $T_{p_i}$'s have the same value, equal to $v(T_{p_1}) = v(T)$ (which is finite). Moreover, the value of a subtree $T_u$ is at least half of the depth of $u$: $v(T_u)$ is either $\infty$ or is the (finite) payoff of a node $w$ in $T_u$, which is at least half the depth of $w$, and so at least half the depth of $u$. Therefore, all nodes in the path $P^\star$ are at depth at most $2 \cdot v(T)$; in particular, $P^\star$ has finite length. Now consider a finite horizon truncation $T'$ of $T$ with value $v(T') = v(T)$; we can further increase the depth of this truncation and assume that the path $P^\star$ is contained in $T'$.
Since $T'$ is finite, its value $v(T')$ is defined by taking the child that has subtree of smallest/largest value on the nodes of $\alg$/$\adv$, until a leaf (of finite value) is reached. But this is by construction the choices that the path $P^\star$ makes, namely they reach the same leaf (or leaves of the same value). The values of $v(T')$ and $v(P^\star)$ are then the value of this leaf/leaves, and hence $v(T') = v(P^\star)$, or equivalently $v(Q^\star,A^\star) = v(T)$ as desired. 

    \medskip
We now consider the first desired equation, namely $\inf_Q v(Q, A^\star) = v(Q^\star, A^\star)$, and notice that it suffices to show
    \begin{align}
        \inf_Q v(Q,A^\star) \ge v(Q^\star, A^\star) \label{eq:long}
    \end{align}
(the equality follows by taking $Q = Q^\star$). Assume that the paths induced by $Q,A^\star$ and $Q^\star,A^\star$ are different, else there is nothing to show. Let $u$ be the first node of $\alg$ in these paths where $Q$ and $Q^\star$ make different decisions; let $w$ and $w^\star$ be the children of $u$ on the paths induced by $Q,A^\star$ and $Q^\star,A^\star$ respectively. Notice that by construction of the choice $Q^\star$ makes at $u$, we have $v(T_{w^\star}) \le v(T_w)$. 

    Now we claim that $v(Q,A^\star) \ge v(T_w)$. If the path $P$ induced by $Q,A^\star$ has infinite length, then all of its nodes are $\e$-ambiguous and so by definition $v(Q,A^\star) = \infty$ and the inequality holds. So suppose $P$ has finite length. Since the value $v(T_w)$ is finite, again due to our assumption that $v(T)$ is finite, there is a finite horizon truncation $T'$ of this subtree with the same value; again, we can further increase the depth of this truncation and assume that the path $P \cap T_w$ (the suffix of $P$ starting at $w$) is contained in $T'$ (and hence $P \cap T_w = P \cap T'$). By definition, the nodes of $\adv$ in the path $P \cap T'$ always select the action that lead to the child whose subtree has largest value (being based on $A^\star$), while the nodes of $\alg$ may not necessarily select the children with lowest value subtrees (being based on $Q$). Since in the definition of the value $v(T')$, both the nodes of $\adv$ and $\alg$ make the optimal decisions, we see that $v(T') \le v(P \cap T')$. Since the value of the path $v(P)$ and of its suffix $v(P \cap T')$ are the same (they are the value of the leaf of the path), we obtain $v(T_w) = v(T') \le v(P \cap T') = v(P) = v(Q,A^\star)$, as claimed.

    Next, we claim that $v(T_{w^\star}) = v(Q^\star,A^\star)$. As proved above, the path $P^\star$ induced by $Q^\star,A^\star$ has finite length. Since the value $v(T_{w^\star})$ is finite, as above, consider a truncation $T'$ of $T_{w^\star}$ with same value $v(T') = v(T_{w^\star})$ and that contains the suffix $P^\star \cap T_{w^\star}$. By the same argument as in the previous paragraph, but now using that both $Q^\star$ and $A^\star$ make optimal decisions, we have $v(T') = v(P^\star \cap T_{w^\star})$,  which is also equal to $v(P^\star)$; this implies $v(T_{w^\star}) = v(P^\star) = v(Q^\star,A^\star)$ as desired.

    Putting together the bounds from the three previous paragraphs yields $v(Q^\star,A^\star) = v(T_{w^\star}) \le v(T_{w}) \le v(Q,A^\star)$ for every strategy $Q$ for the algorithm. Taking an infimum over all such $Q$'s finally proves \eqref{eq:long}, and hence that $\inf_Q v(Q, A^\star) = v(Q^\star, A^\star)$.

    \medskip
    The proof that $\sup_A v(Q^\star, A) = v(Q^\star, A^\star)$ uses the exact same arguments, but exchanging the roles of the players $\alg$ and $\adv$.
\end{proof}

By standard arguments, the existence of a saddle-point implies a minimax result, which is the first element required to prove Lemma \ref{lemma:infoAdv}.

\begin{lemma} \label{lemma:minimax}
    Suppose the value of the full game is finite, and let $Q^\star,A^\star$ be a subgame perfect equilibrium guaranteed to exist by Lemma \ref{lemma:equi}. The the following minimax holds; $$\sup_A \inf_Q \;v(Q,A) = v(Q^\star, A^\star) = \inf_Q\; \sup_A\; v(Q,A),$$ where the infima are over all possible strategies for $\alg$, and the suprema are over all strategies for $\adv$. 
\end{lemma}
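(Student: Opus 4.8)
The plan is to derive the minimax equality purely formally from the saddle-point property established in Lemma~\ref{lemma:equi}, using only the max--min inequality (weak duality) together with the two one-sided bounds that a saddle point provides. No further analysis of the game tree is needed, since all the delicate finite-horizon-truncation arguments have already been absorbed into Lemma~\ref{lemma:equi}.

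First I would record the (trivial) weak-duality inequality: for any real-valued function of two arguments, $\sup_A \inf_Q v(Q,A) \le \inf_Q \sup_A v(Q,A)$. This holds because for every fixed pair of strategies $(Q,A)$ we have $\inf_{Q'} v(Q',A) \le v(Q,A) \le \sup_{A'} v(Q,A')$; taking $\inf_Q$ on the left and $\sup_A$ on the right of the outer inequality $\inf_{Q'} v(Q',A)\le \sup_{A'}v(Q,A')$ and noting the left side no longer depends on $Q$ while the right no longer depends on $A$ gives the claim. Here all quantities are well defined (possibly $+\infty$, but finite under our standing assumption that the full game has finite value, by Lemma~\ref{lem:subgame-finite}).

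Next I would use the saddle point $Q^\star,A^\star$ from Lemma~\ref{lemma:equi}. Plugging $A = A^\star$ into the outer supremum gives
$$
\sup_A \inf_Q v(Q,A) \;\ge\; \inf_Q v(Q,A^\star) \;=\; v(Q^\star,A^\star),
$$
where the equality is the left-hand identity of~\eqref{eq:equi}. Symmetrically, plugging $Q=Q^\star$ into the outer infimum gives
$$
\inf_Q \sup_A v(Q,A) \;\le\; \sup_A v(Q^\star,A) \;=\; v(Q^\star,A^\star),
$$
using the right-hand identity of~\eqref{eq:equi}. Chaining these with weak duality yields
$$
v(Q^\star,A^\star) \;\le\; \sup_A \inf_Q v(Q,A) \;\le\; \inf_Q \sup_A v(Q,A) \;\le\; v(Q^\star,A^\star),
$$
so every inequality is an equality, which is exactly the asserted minimax identity.

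There is essentially no obstacle here: the entire content — the attainment of the suprema/infima and the existence of the equilibrium — is supplied by Lemmas~\ref{lem:subgame-finite} and~\ref{lemma:equi}. The only point requiring a sentence of care is the finiteness of the values appearing in the chain (so that ``$\le$'' can be concatenated without $\infty-\infty$ issues), which follows from the hypothesis that the full game has finite value together with Lemma~\ref{lem:subgame-finite}; alternatively, one can observe that the argument goes through verbatim in the extended reals since only additions are avoided and monotonicity of $\inf/\sup$ is all that is used.
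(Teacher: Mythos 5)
Your proof is correct and follows essentially the same route as the paper: both start from the weak-duality inequality $\sup_A\inf_Q v(Q,A) \le \inf_Q\sup_A v(Q,A)$ and then sandwich both sides between $v(Q^\star,A^\star)$ using the two one-sided saddle-point identities from Lemma~\ref{lemma:equi}. Nothing to correct.
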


\begin{proof}
From the guarantees of $Q^\star,A^\star$, for all strategies $Q,A$ for $\alg$ and $\adv$ respectively, we have  $$v(Q^\star, A^\star) \leq \inf_Q v(Q, A^\star) \le \sup_A \inf_Q v(Q, A)$$ and $$\inf_Q \sup_A v(Q,A) \leq \sup_A v(Q^\star, A) \leq v(Q^\star, A^\star).$$ Since we always have $\sup_A \inf_Q v(Q, A) \leq \inf_Q \sup_A v(Q,A)$, we get equalities throughout.  
\end{proof}

 The last element required for proving Lemma \ref{lemma:infoAdv} is the equivalence between the value of this game and the information complexity of the underlying optimization problem. 

\begin{lemma} \label{lemma:equivIcomp}
    The value of the full game $v(\textup{full game})$ equals $\icomp_{\e}(\cI,\cO)$.
\end{lemma}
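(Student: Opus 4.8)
The plan is to show both inequalities $v(\textup{full game}) \le \icomp_\e(\cI,\cO)$ and $v(\textup{full game}) \ge \icomp_\e(\cI,\cO)$, where recall $\icomp_\e(\cI,\cO) = \inf_D \sup_{I \in \cI} \icomp_\e(D,I,\cO)$. The key observation is the dictionary from the game-theoretic setup: a query strategy $D$ in the sense of Definition~\ref{def:query-strategy} is exactly a game strategy $Q$ for $\alg$, and any instance $I \in \cI$ induces a game strategy $A_I$ for $\adv$ (respond $q(I)$ to every query $q$). Under strategies $Q = D$ and $A = A_I$, the induced path $P$ in the game tree has its first $\epsilon$-unambiguous node at depth exactly $2 \cdot \icomp_\e(D,I,\cO)$: by Definition~\ref{def: icomp}, $\icomp_\e(D,I,\cO)$ is the least $k$ such that all instances consistent with $D$'s first $k$ responses on $I$ share a common $\e$-approximate solution, which is precisely the condition that the depth-$k$ node on $P$ is $\epsilon$-unambiguous. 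Hence $v(Q,A_I) = \icomp_\e(D,I,\cO)$ whenever this quantity is finite.

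For the direction $v(\textup{full game}) \ge \icomp_\e(\cI,\cO)$: first handle the degenerate case where the full game has infinite value, in which case the right-hand side is $+\infty$ as well (if some $D$ achieved finite worst-case complexity, the associated decision tree for $\alg$ would be a finite-horizon strategy, contradicting infinite game value — one formalizes this via Claim~\ref{claim:subtree-value} / Lemma~\ref{lem:subgame-finite}). When the game value is finite, invoke Lemma~\ref{lemma:minimax} to write $v(\textup{full game}) = \inf_Q \sup_A v(Q,A)$. For any strategy $Q = D$, restricting the supremum over adversary strategies to just those of the form $A_I$ for $I \in \cI$ gives $\sup_A v(D,A) \ge \sup_{I \in \cI} v(D,A_I) = \sup_{I \in \cI} \icomp_\e(D,I,\cO)$. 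Taking the infimum over $D$ yields $v(\textup{full game}) \ge \icomp_\e(\cI,\cO)$.

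For the direction $v(\textup{full game}) \le \icomp_\e(\cI,\cO)$: again using Lemma~\ref{lemma:minimax}, $v(\textup{full game}) = \inf_Q \sup_A v(Q,A)$, so it suffices to exhibit a single query strategy $D$ with $\sup_A v(D,A) \le \icomp_\e(\cI,\cO)$. Here the subtlety is that an arbitrary adversary strategy $A$ need not be of the form $A_I$; rather, $A$ only guarantees that at every round there is \emph{some} instance consistent with its responses so far. However, the stopping condition for the game is exactly $\epsilon$-unambiguity of the current node, i.e. all surviving instances share a common $\e$-approximate solution. So given the optimal (or near-optimal) $D$ witnessing $\icomp_\e(\cI,\cO)$, for any adversary $A$ the set of surviving instances after $k = \icomp_\e(\cI,\cO)$ rounds of $D$'s queries — call the responses $r_1, \dots, r_k$, which by the adversary constraint are consistent with at least one $\bar I \in \cI$ — must have a common $\e$-approximate solution, because this set is contained in the set of instances consistent with $\bar I$'s first $k$ responses under $D$, which is $\epsilon$-unambiguous by definition of $\icomp_\e(D,\bar I,\cO) \le \icomp_\e(\cI,\cO)$. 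Hence the path induced by $(D,A)$ reaches an $\epsilon$-unambiguous node by depth $2k$, so $v(D,A) \le k$ for all $A$, giving $\sup_A v(D,A) \le \icomp_\e(\cI,\cO)$ and completing the proof. (If the infimum defining $\icomp_\e(\cI,\cO)$ is not attained, run the same argument with a $D$ achieving value within $1$ of the infimum, then use integrality of the value function to conclude.)

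The main obstacle is the second direction: one must be careful that the adversary in the game is \emph{more} powerful than any fixed instance (it can adaptively choose responses), yet the stopping rule — $\epsilon$-unambiguity — is precisely calibrated so that this extra power does not let the adversary prolong the game beyond $\icomp_\e(\cI,\cO)$ rounds against the right strategy $D$. Getting this comparison right, and correctly handling the infinite-value and non-attained-infimum edge cases (where Lemmas~\ref{lem:subgame-finite} and~\ref{lemma:minimax} and integrality of $v(\cdot)$ are needed), is where the real care lies; the rest is bookkeeping with the game-tree/query-strategy dictionary.
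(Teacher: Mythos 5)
Your argument is correct and mirrors the paper's own proof: both reduce, via Lemma~\ref{lemma:minimax}, to comparing $\inf_Q \sup_A v(Q,A)$ with $\inf_D \sup_{I} \icomp_\e(D,I,\cO)$, using the same dictionary between game strategies and query strategies and the key fact that an adversary's responses at any node of the game tree are always consistent with at least one instance. The only differences are organizational --- for the direction $v(\textup{full game}) \le \icomp_\e(\cI,\cO)$ you fix a single near-optimal $D$ and bound $\sup_A v(D,A)$, while the paper proves the per-strategy inequality $\sup_{I}\icomp_\e(Q,I,\cO) \ge \sup_A v(Q,A)$ for every $Q$ and handles the infinite-value case inline rather than as a separate preliminary case --- and these do not change the substance of the argument.
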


\begin{proof}
    Combining Lemmas \ref{lemma:equi} and \ref{lemma:minimax}, $v(\textup{full game}) = \inf_Q \sup_A v(Q,A)$, so it suffices to show $$\inf_Q \sup_A v(Q,A) = \inf_Q \sup_{I \in \cI}\icomp\textstyle{_\epsilon}(Q,I, \cO).$$

    To see the ``$\ge$'' direction: Consider any strategy $Q$ for $\alg$ and instance $I \in \cI$. Consider the adversary $A$ based on the instance $I$, namely that reports $q(I)$ whenever $Q$ asks a query $q$. Notice that $v(Q,A) = \icomp_\e(Q,I,\cO)$ (if $\icomp_\e(Q,I,\cO)$ is finite, after exactly $\icomp_\e(Q,I,\cO)$ many moves of the algorithm $Q$ we reach an $\e$-unambiguous node in the path induced by $Q,A$, so $v(Q,A) = \icomp_{\textstyle \e}(Q,I,\cO)$; if  $\icomp_\e(Q,I,\cO)$ is infinite, every finite horizon truncation of the path induced by $Q,A$ must end on an $\e$-ambiguous node, so $v(Q,A)$ is also infinite). That is, for every $I$ we can find a strategy $A$ for the adversary with the ``same value'', which then implies $\sup_A v(Q,A) \ge \sup_{I \in \cI} \icomp_{\e} (Q,I,\cO)$. Taking an infimum over all $Q$'s on both sides gives the desired inequality $\inf_Q \sup_A v(Q,A) \ge \inf_Q \sup_{i \in \cI} \icomp_{\e} (Q,I,\cO)$.

    The proof for the ``$\le$'' direction is analogous: Consider any strategy $Q$ for $\alg$ and strategy $A$ for $\adv$. If $v(Q,A) = \infty$, then for every finite horizon truncation of length $N$ (i.e. with $\frac{N}{2}$ queries by the algorithm) of the path induced by $Q,A$ ends on an $\e$-ambiguous node; let $I \in \cI$ be an instance consistent with $A$ given up to this node. Then we see that $\icomp_\e(Q,I,\cO) \ge \frac{N}{2}$. Since this holds for every even number $N$, we get $\sup_{I \in \cI} \icomp_\e(Q,I,\cO) = \infty$. Similarly, if $v(Q,A)$ is finite, then there is a node $L$ in the path induced by $Q,A$ with $\Lag_\e(L) = v(Q,A)$; any instance $I$ associated to this node $L$ (i.e., compatible with the answers given by $A$ until reaching $L$) then has $\icomp_{\e}(Q,I,\cO) \ge \Lag_\e(L) = v(Q,A)$. These observations imply $\sup_{I \in \cI} \icomp_\e(Q,I,\cO) \ge \sup_A v(Q,A)$. Taking an infimum over $Q$ gives $\inf_Q \sup_{I \in \cI} \icomp_\e(Q,I,\cO) \ge \inf_Q \sup_A v(Q,A)$ as desired. 

    This concludes the proof of the lemma. 
\end{proof}

Now we can finally prove Lemma \ref{lemma:infoAdv}, stated in the beginning of the section.


\begin{proof}[Proof of Lemma \ref{lemma:infoAdv}]
   From Observation \ref{obs:hardAdv}, there is an adversary $A$ that is $\e$-hard for $\ell$-rounds iff $\sup_A \inf_Q v(Q,A) > \ell$. Combining Lemmas \ref{lemma:equi}, \ref{lemma:minimax}, and \ref{lemma:equivIcomp}, the latter is equivalent to  $\icomp_\eps(\I, \cO) > \ell$, which concludes the proof. 
\end{proof}

\end{document}